\newtheorem{theo}{Theorem}
\newtheorem*{theo*}{Theorem}
\newtheorem{defi}{Definition}
\newtheorem{lem}{Lemma}
\newtheorem{cor}{Corollary}
\newtheorem{pro}{Proposition}
\newtheorem{exa}{Example}
\newtheorem{conj}{Conjecture}
\newtheoremstyle{remarques}{\medskipamount}{\medskipamount}{}
                        {0pt}{\bfseries}{.}{ }{}
\theoremstyle{remarques}
\newtheorem{rem}{Remark}
\DeclareMathOperator{\Espe}{\bf E}
\renewcommand{\geq}{\geqslant}
\DeclareMathOperator{\indic}{{\bf 1}}
\renewcommand{\le}{\leqslant}
\renewcommand{\leq}{\leqslant}
\DeclareMathOperator{\Proba}{\bf P}
\DeclareMathOperator{\rpref}{pref}
\DeclareMathOperator{\lpref}{pref}
\newcounter{compteurExo}[section]
\DeclareMathOperator{\casc}{casc}
\newcommand{\g}[1]{\mathbb #1}
\newcommand{\rond}[1]{{\mathscr #1}}
\newcommand{\proba}{\textbf{P}}
\newcommand{\Nfin}{\color{black}}
\begin{document}
 \title{Characterization of stationary probability measures for\\Variable Length Markov Chains}
 \author{Peggy C\'enac\thanks{Universit\'e de Bourgogne,
Institut de Math\'ematiques de Bourgogne,
IMB UMR 5584 CNRS,
9 rue Alain Savary - BP 47870, 21078 DIJON CEDEX, France.
}, Brigitte Chauvin\thanks{Laboratoire de Math\'ematiques de Versailles, UVSQ, CNRS, 
Universit\'e Paris-Saclay, 78035 Versailles, France
}, Fr\'ed\'eric Paccaut\thanks{LAMFA,
CNRS, UMR 7352,
Universit\'e  de Picardie Jules Verne,
33 rue Saint-Leu, 80039 Amiens, France.
} \ 
and Nicolas Pouyanne\footnotemark[2]}
\maketitle
\begin{abstract}
	By introducing a key combinatorial structure for words produced by a Variable Length Markov Chain (VLMC), the \emph{longest internal suffix}, precise characterizations of existence and uniqueness of a stationary probability measure for a VLMC chain are given. These characterizations turn into necessary and sufficient conditions for VLMC associated to a subclass of probabilised context trees: the \emph{shift-stable} context trees. As a by-product, we prove that a VLMC chain whose stabilized context tree is again a context tree has at most one stationary probability measure.
\end{abstract}

\vskip 10pt
{\bf MSC 2010}: 60J05, 60C05, 60G10.

\vskip 5pt
{\bf Keywords}: variable length Markov chains, stationary probability measure.
\tableofcontents
\section{Introduction}
\label{sec:intro}

Infinite random sequences of letters can be viewed as stochastic chains or as strings produced by a source, in the sense of information theory. In this last frame, context tree models have been introduced by \cite{rissanen/83} as a parsimonious generalization of Markov models to perform data compression. They have been successfully studied and used since then in many fields of applications of probability, including bioinformatics, universal coding, statistics or linguistics.

Statistical use of context tree models requires the possibility of constructing efficient estimators of context trees. For this matter, 
various algorithms (for example Rissanen's \emph{Context} algorithm, or the so-called ``context tree weighting'' \cite{willems/shtarkov/tjalkens/95})
have been developed. The necessity of considering \emph{infinite depth} of context trees has emerged, even for finite memory Markov processes (see for instance \cite{willems/98}).
Moreover, estimators for not necessarily finite memory processes lead to consider infinite context trees. And
estimating context trees is harder (namely the consistency is no more ensured), when the depth of the context tree is infinite. This problem in addressed in \cite{csiszar/talata/06} and \cite{talata/13}.

In biology, persistent random walks are one possible model to address the question of anomalous diffusions in cells (see for instance \cite{FedTanZub}). Actually, such random walks are non Markovian and the displacements and the jumping times are correlated. As pointed in 
\cite{cenac/chauvin/herrmann/vallois/13, CDLO, cenac:hal-01658494},
persistent random walks can be viewed as VLMC for an infinite context tree.

\vskip 5pt
Variable length Markov chains are also a particular case of processes defined by a $g$-function (where the $g$-function is piecewise constant on a countable set of cylinders), also called "cha\^ines \`a liaisons compl\`etes" after \cite{doeblin/fortet/37} or "chains with infinite order" after \cite{harris/55}. Stationary probability measures for VLMC are $g$-measures. The question of uniqueness of $g$-measures has been adressed by many authors when the function $g$ is continuous (in this case, the existence is straightforward), see \cite{johansson/oberg/03}, \cite{fernandez/maillard/05}. Recently, interest raised also for the question of existence and uniqueness when $g$ is not continuous, see \cite{gallo/11}, \cite{gallo/garcia/13}, \cite{desantis/piccioni/12} for a perfect simulation point of view and the more ergodic theory flavoured \cite{gallo/paccaut/13}.

 \vskip 5pt
In this paper we go towards some necessary and sufficient conditions to ensure existence and uniqueness of a stationary probability measure for a general VLMC.
In this introduction we give the thread of the story and the main results. Key concepts are precisely defined in Section \ref{sec:def} while Section \ref{sec:general} is devoted to stating and proving the main theorem (Theorem \ref{fQbij}). The particular case of so-called stable context trees is detailed in Section \ref{sec:stable}, giving a NSC (Theorem~\ref{th:stable}). Several illustrating examples are given in Section \ref{sec:examples}.

\vskip 5pt
Let us recall briefly the probabilistic presentation of Variable Length Markov Chains (VLMC), following \cite{cenac/chauvin/paccaut/pouyanne/12}. Introduce the set $\rond L$ of left-infinite words on the alphabet $\rond A=\{0,1\}$  and  consider a saturated tree $\rond T$ on this alphabet, \emph{i.e.}\@ a tree such that each node has $0$ or $2$ children, whose leaves are words (possibly infinite) on $\rond A$. 
The set of leaves, denoted by $\rond C$ is supposed to be at most countable.

To each leaf $c\in\rond C$, called a context, is attached a probability Bernoulli distribution $q_{c}$ on $\rond A$. Endowed with this probabilistic structure, such a tree is named a probabilised context tree. The related VLMC is defined as the Markov chain $(U_n)_{n\geq 0}$ on $\rond L$ whose transitions are given, for $\alpha\in\rond A$, by
\begin{equation}
 \label{eq:def:VLMC}
 \proba(U_{n+1} = U_n\alpha| U_n)=q_{\lpref (\overline{U_n})}(\alpha),
 \end{equation}
 where $\lpref(u)\in\rond C$ is defined as the only prefix of the right-infinite word $u$  appearing as a leaf of the context tree. As usual, the bar denotes mirror word\footnote{
 In the whole paper, words are as usual read from left to right. One exception to this rule: since the process $(U_n)$ of left-infinite words grows by adding letters to the right, all words that are used to describe suffixes of $U_n$ are read from right to left, hence are written with a bar. See also Remark~\ref{rem:bar}.}
and concatenation of words is written without any symbol.

\vskip 5pt
The heuristic is classically as follows: assume that a stationary measure exists for a given VLMC. Describe all properties this measure should have, then try to prove these properties are sufficient to get the existence of a stationary measure. 

\vskip 5pt
Let $\pi$ be a stationary measure for $(U_n)$. It is entirely defined by its value $\pi(\rond L w)$ on cylinders $\rond L w$, 
for all finite words $w$. By definition \eqref{eq:def:VLMC} of the VLMC, for any letter $\alpha\in\rond A$ and $ w$ non internal word of the context tree,
\begin{equation}
\label{formule:casc0}
\pi(\rond L \overline{w}\alpha) = q_{\lpref ({w})}(\alpha) \pi(\rond L \overline{w}).
\end{equation}
A detailed proof of this formula and the following ones will be given at Lemma \ref{lem:cascade}. This formula applies again for $\pi(\rond L \overline{w})$, and so on, and so forth, until... it is not possible anymore, which means that the suffix of ${w}$ is of the form $\alpha s$ where $\alpha\in\rond A$ and $s$ is an internal word of the context tree. This leads to point out the following decomposition of any finite word $w$:
\[
w=\beta _1\beta _2\dots\beta _{p_w}\alpha _ws_w,
\]
where
 
$\bullet$ $p_w$ is a nonnegative integer and $\beta_i\in \rond A$, for all $i = 1, \dots , p_w$, 

$\bullet$ $s_w$ is the longest internal suffix of $w$,

$\bullet$ $\alpha _w\in\rond A$.

\vskip 5pt
With this decomposition, $s_w$ is called the \emph{lis} of $w$ and $\alpha_w s_w$ the $\alpha$-\emph{lis} of $w$.
Consequently, for any stationary measure $\pi$ and for any finite word $w$, write $ w = v\alpha_w s_w$ where $v$ is a finite word and $\alpha_w s_w$ is the $\alpha$-lis of $ w$ so that
\begin{equation}
\label{formule:casc1}
\pi(\rond L \overline w) = \casc (w) \pi(\rond L \overline{\alpha_w s_w}),
\end{equation}
where $\casc (w)$, 
the \emph{cascade} of $w$ is defined as
\[
\casc (w)=\prod _{1\leq k\leq p_w}q_{\rpref\sigma ^k(w)}(\beta _k).
\]
In the above formula, $\sigma$ is the shift mapping defined by $\sigma\left(\alpha _0\alpha _1\alpha _2\cdots\right)=\alpha _1\alpha _2\cdots$.

\vskip 5pt
Formula \eqref{formule:casc1} indicates that a stationary measure $\pi$ is entirely determined by its value $\pi(\rond L \overline{\alpha v})$ for all $\alpha v$, where $\alpha$ is a letter and $v$ an internal word of the context tree. Further, notice that for any internal word $v$, by disjoint union, using Formula \eqref{formule:casc0},
\begin{equation}
\label{formule:casc2}
\pi\left(\rond L \overline{\alpha v}\right)
=\pi\left(\rond L \overline{ v}\alpha\right)
=\sum _{c\in\rond C,~c=v\cdots}\pi\left(\rond L \overline{c}\right) q_c\left(\alpha\right).
\end{equation}
This means that $\pi$ is in fact determined by the $\pi\left(\rond L \overline c\right)$ where $c$ is a context. Admit for this introduction that only finite contexts matter (Lemma~\ref{lem:pineq0}) and denote by $\rond C^f$ the set of finite contexts. Formula  \eqref{formule:casc1} applies to $\pi\left(\rond L \overline c\right)$:
\[
\pi\left(\rond L \overline c\right) = \casc(c) \pi\left(\rond L \overline{\alpha _cs_c}\right),
\]
so that $\pi$ is entirely determined by its value $\pi\left(\rond L \overline{\alpha s}\right)$ on all the $\alpha s$ which are $\alpha$-lis \emph{of contexts}. Denote by $\rond S$ the set of all the $\alpha$-lis of contexts.
All these values of $\pi$ are connected since by Formula \eqref{formule:casc2}, for any  $\alpha s\in\rond S$,
\[
\pi\left(\rond L \overline{\alpha s}\right) = \sum _{c\in\rond C^f,~c=s\cdots}\casc(\alpha c)\pi\left(\rond L \overline{\alpha _cs_c}\right) = \sum_{\beta t\in\rond S}\pi\left(\rond L \overline{\beta t}\right) \sum _{\substack{c\in\rond C^f\\c=s\cdots\\c=\cdots [\beta t]}}\casc(\alpha c),
\]
where the notation $c=\cdots [\beta t]$ means that $\beta t$ is the $\alpha$-lis of $c$.

Introduce the square matrix $Q=\left( Q_{\alpha s,\beta t}\right) _{(\alpha s,\beta t)\in\rond S^2}$ (at most countable) defined by
\[
Q_{\alpha s,\beta t}
=\sum _{\substack{c\in\rond C^f\\c=t\cdots\\c=\cdots [\alpha s]}}
\casc\left(\beta c\right), 
\]
so that the above formula on $\pi$ writes
\[
\pi\left(\rond L \overline{\alpha s}\right) = \sum_{\beta t\in\rond S}\pi\left(\rond L \overline{\beta t}\right) Q(\beta t, \alpha s).
\]
In otherwords, $\left( \pi\left(\rond L\overline{\alpha s}\right)\right)_{\alpha s\in\rond S}$ is a left-fixed vector of the matrix $Q$. 
It appears that the study of the matrix $Q$ acting on the $\alpha$-lis of contexts is the key tool to characterize a stationary measure for the VLMC. It allows us to prove in Section \ref{sec:general} the following theorem.

\begin{theo*}
\label{th:intro}
Let $(\rond T,q)$ be a probabilised context tree and $U$ the associated VLMC.
Assume that $\forall \alpha\in\rond A$, $\forall c\in\rond C$, $q_c(\alpha)\neq 0$.

\vskip 3pt
(i)
Assume that there exists a finite $U$-stationary probability measure $\pi$ on $\rond L$.
Then the cascade series $\displaystyle\sum _{c\in\rond C^f,~c=\cdots [\alpha s]}\casc (c)$ converge.
Calling $\kappa _{\alpha s}$ its sum,
\begin{equation}
\sum _{\alpha s\in\rond S}\pi\left(\rond L\overline{\alpha s}\right)
\kappa _{\alpha s}=1.
\end{equation}

\vskip 3pt
(ii)
Assume that the cascade series $\displaystyle\sum _{c\in\rond C^f,~c=\cdots [\alpha s]}\casc (c)$ converge.
Then, there is a bijection between the set of $U$-stationary probability
measures on $\rond L$ and the set of left-fixed vectors $\left( v_{\alpha s}\right)_{\alpha s\in\rond S}$ of $Q$ that satisfy
\begin{equation}
\sum _{\alpha s\in\rond S}v_{\alpha s}\kappa _{\alpha s}=1.
\end{equation}
\end{theo*}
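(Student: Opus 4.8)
The strategy is to make rigorous the heuristic cascade computation sketched in the introduction, treating the two directions separately but with a common core. Throughout, the standing hypothesis $q_c(\alpha)\neq 0$ for all $\alpha,c$ guarantees that no cascade factor vanishes, so convergence questions are about genuine positive series and we never divide by zero.

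For part (i), I would start from a given finite stationary probability measure $\pi$. The first step is to establish Formulas \eqref{formule:casc0}, \eqref{formule:casc1}, \eqref{formule:casc2} rigorously (these are promised as Lemma~\ref{lem:cascade}) together with the fact that only finite contexts carry mass, i.e.\@ $\pi(\rond L\overline c)$ can be treated via $\rond C^f$ (promised as Lemma~\ref{lem:pineq0}). Next, for a fixed $\alpha$-lis $\alpha s\in\rond S$, partition the cylinder $\rond L\overline{\alpha s}$ according to which finite context $c$ with $c=\cdots[\alpha s]$ one lands in: by $\sigma$-additivity of $\pi$ and repeated application of \eqref{formule:casc0}, one gets $\pi(\rond L\overline{\alpha s})=\sum_{c\in\rond C^f,\, c=\cdots[\alpha s]}\casc(c)\,\pi(\rond L\overline c)\cdot(\text{correction})$; more cleanly, using $\casc(c)=\casc(c)$ and the definition of $\kappa_{\alpha s}$, summing the relation $\pi(\rond L\overline c)=\casc(c)\pi(\rond L\overline{\alpha_c s_c})$ appropriately and normalizing by $\sum_{\alpha s}\pi(\rond L\overline{\alpha s})\leq 1$. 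The cleanest route is: the cylinders $\{\rond L\overline c : c\in\rond C^f\}$ (plus a $\pi$-null remainder) partition $\rond L$, so $\sum_{c\in\rond C^f}\pi(\rond L\overline c)=1$; then group the contexts by their $\alpha$-lis and substitute $\pi(\rond L\overline c)=\casc(c)\pi(\rond L\overline{\alpha_c s_c})$ to obtain $1=\sum_{\alpha s\in\rond S}\pi(\rond L\overline{\alpha s})\sum_{c=\cdots[\alpha s]}\casc(c)$. Finiteness of the left side forces each inner series $\sum_{c=\cdots[\alpha s]}\casc(c)$ with $\pi(\rond L\overline{\alpha s})>0$ to converge; the cases where $\pi(\rond L\overline{\alpha s})=0$ need a separate argument (Formula \eqref{formule:casc2} lets one propagate positivity, or one observes such $\alpha s$ are irrelevant) — and this is the one genuinely delicate point, handling $\alpha$-lis carrying zero mass. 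Once all these series converge, the displayed identity $\sum_{\alpha s\in\rond S}\pi(\rond L\overline{\alpha s})\kappa_{\alpha s}=1$ is exactly the grouped partition identity.

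For part (ii), assume all cascade series converge, so the $\kappa_{\alpha s}$ are finite and, by the definition of $Q$, each row sum $\sum_{\beta t}Q_{\alpha s,\beta t}$ is finite (indeed it equals a sub-sum of $\kappa$-type series and one should check it is exactly $\kappa_{\alpha s}$ up to the $\casc(\beta c)$ versus $\casc(c)$ bookkeeping — the letter $\beta$ in front only multiplies by a $q$-factor along the first edge, which is absorbed in the definition). I would then construct the claimed bijection explicitly. Given a stationary $\pi$, associate the vector $v_{\alpha s}:=\pi(\rond L\overline{\alpha s})$; the computation in the introduction shows it is left-fixed for $Q$, and part (i)'s identity gives $\sum v_{\alpha s}\kappa_{\alpha s}=1$. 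Conversely, given such a vector $v$, define a set function on cylinders by the cascade formulas: first $\pi(\rond L\overline{\alpha s}):=v_{\alpha s}$ for $\alpha s\in\rond S$, then extend to all contexts $c\in\rond C^f$ via $\pi(\rond L\overline c):=\casc(c)v_{\alpha_c s_c}$, then to internal words via \eqref{formule:casc2}, and finally to arbitrary finite words via \eqref{formule:casc1}. One must check this is well defined (the different routes to the same cylinder agree — this reduces to the decomposition $w=\beta_1\cdots\beta_{p_w}\alpha_w s_w$ being canonical and to the fact that $Q$-fixedness is exactly the compatibility condition), that it is consistent (Kolmogorov: $\pi(\rond L w)=\pi(\rond L w0)+\pi(\rond L w1)$, which follows from \eqref{formule:casc0} and $q_c(0)+q_c(1)=1$), that it is nonnegative and sums to $1$ (here the normalization $\sum v_{\alpha s}\kappa_{\alpha s}=1$ is precisely what makes the total mass $1$), hence extends to a probability measure on $\rond L$ by Carathéodory; and that it is stationary (re-deriving \eqref{eq:def:VLMC}, equivalently \eqref{formule:casc0}, for the constructed $\pi$). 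Finally one checks the two maps are mutually inverse, which is immediate since both are pinned down by their values on $\rond S$.

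The main obstacle, as flagged above, is the bookkeeping around $\alpha$-lis that carry zero $\pi$-mass in part (i), and — more substantially in part (ii) — proving that the cascade-defined set function is \emph{well defined and consistent} on all cylinders: every finite word has a unique $\alpha$-lis decomposition, but a given cylinder $\rond L\overline{v}$ with $v$ internal is reached both ``from above'' (as a union of context cylinders via \eqref{formule:casc2}) and the internal words themselves are not contexts, so one needs that the $Q$-fixed-point equation is exactly the cocycle/compatibility condition guaranteeing coherence; unwinding that equivalence, together with the absolute convergence needed to freely rearrange the (countable) sums, is the technical heart of the argument. The stationarity check at the end is then essentially a formality, since \eqref{formule:casc0} has been built into the construction.
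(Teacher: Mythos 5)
Your architecture is the same as the paper's: part (i) is the partition $1=\sum_{c\in\rond C^f}\pi\left(\rond L\overline c\right)$ combined with the cascade formula $\pi\left(\rond L\overline c\right)=\casc (c)\pi\left(\rond L\overline{\alpha _cs_c}\right)$ and a regrouping by $\alpha$-lis; part (ii) is the map $\pi\mapsto\left(\pi\left(\rond L\overline{\alpha s}\right)\right)_{\alpha s\in\rond S}$, with surjectivity obtained by building the finite-dimensional marginals from the cascade formulas and invoking Kolmogorov's extension theorem. So the route is right. But what you have written is a roadmap, and the two points you yourself call ``the technical heart'' are exactly where the proof lives; neither is carried out, and one of your proposed escapes would fail.

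Concretely. (a) In part (i), the conclusion is that \emph{every} cascade series converges, so you must show $\pi\left(\rond L\overline{\alpha s}\right)>0$ for \emph{every} $\alpha s\in\rond S$; your fallback ``one observes such $\alpha s$ are irrelevant'' cannot work, since an $\alpha s$ of zero mass would leave its cascade series unconstrained and the stated conclusion would be unproved for it. The only viable route is the one you mention first, and it is Lemma~\ref{lem:pineq0}(i) of the paper: an induction on $|w|$ showing $\pi\left(\rond L\overline w\alpha\right)=0\Rightarrow\pi\left(\rond L\overline w\right)=0$, where the internal case needs the disjoint union over all finite \emph{and infinite} contexts extending $w$ together with non-nullness; the companion statement that infinite branches carry no mass (needed already to write $\sum_{c\in\rond C^f}\pi\left(\rond L\overline c\right)=1$) is a separate argument splitting ultimately periodic from aperiodic branches. (b) In part (ii), the Kolmogorov consistency is $\mu(w0)+\mu(w1)=\mu(w)$ and is \emph{not} a consequence of ``$Q$-fixedness is the compatibility condition''; it rests on a case analysis on whether $s_wa$ is internal or a context, which determines the $\alpha$-lis of $wa$. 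The stationarity of the constructed measure is the \emph{other} identity $\mu(0w)+\mu(1w)=\mu(w)$ (you have the two swapped: $\pi\left(\rond Lu0\right)+\pi\left(\rond Lu1\right)=\pi\left(\rond Lu\right)$ is stationarity, while $\pi\left(\rond L0u\right)+\pi\left(\rond L1u\right)=\pi\left(\rond Lu\right)$ is additivity), and for internal $w$ it requires knowing that the constructed $\pi$ puts zero mass on infinite branches --- a fact you never establish and which the paper extracts from the normalization $\sum_{\alpha s}v_{\alpha s}\kappa_{\alpha s}=1$. Finally, your parenthetical claim that the row sums of $Q$ equal $\kappa_{\alpha s}$ ``up to bookkeeping'' is not correct in general (row-stochasticity of $Q$ is a theorem about \emph{stable} trees only); all that is needed, and all that is true here, is $Q_{\alpha s,\beta t}\leq\kappa_{\alpha s}<\infty$. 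Until (a) and (b) are written out, this is a correct plan rather than a proof.
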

The characterization given in this theorem is expressed via the probability distributions $q_c$. Nevertheless,
the role of context lis and $\alpha$-lis suggests that the \emph{shape} of the context tree matters a lot. Actually, in the case of \emph{stable} trees (so called because they are stable by the shift), it turns out that the matrix~$Q$ is stochastic and irreducible. Consequently, the theorem above provides in Theorem~\ref{th:stable} a NSC in terms of the recurrence of~$Q$.
As a corollary, for context trees $(\rond T,q)$ whose stabilized (the smallest stable context tree that contains $\rond T$)
has again at most countably many infinite branches, the associated chain admits either a unique or no stationary probability measure.

Moreover, in the particular case of a \emph{finite} number of context $\alpha$-lis, Theorem~\ref{cor:finite} gives a rather easy condition for the existence and uniqueness of a stationary probability measure: there exists a unique probability measure if and only if the cascade series converge. 
Section~\ref{sec:stable}, which deals with stable trees, is made complete with the link that can be highlighted with the semi-Markov chains theory.

Section~\ref{sec:examples} contains a list of examples that illustrate different configurations of infinite branches and context $\alpha$-lis, and also the respective roles of the geometry of the context tree on the one hand and on the other the asymptotic behaviour of the Bernoulli distributions $q_c$ when the size of the context $c$ (having a given $\alpha$-lis) grows to infinity.
The final Section \ref{sec:towards} gives some tracks towards better characterizations in the non-stable case. A conjecture is set: if the set of infinite branches does not contain any shift-stable subset, then there exists a unique stationary probability measure for the associated VLMC.
\section{Definitions}
\label{sec:def}

\subsection{VLMC}

In the whole paper, $\rond A=\{ 0,1\}$ denotes the set of \emph{letters} (the \emph{alphabet}) and $\rond L$ and $\rond R$ respectively denote the left-infinite and the right-infinite $\rond A$-valued sequences\footnote{$\g N$ denotes the set of natural integers $\{ 0,1,2,\cdots\}$.}:
\[
\rond L=\rond A^{-\g N}
{\rm ~~and~~}
\rond R=\rond A^{\g N}.
\]
The set of finite words, sometimes denoted by $\rond A^*$, will be denoted by $\rond W$:
\[
\rond W=\bigcup _{n\in\g N}\rond A^n,
\]
the set $\rond A^0$ being the emptyset.
When $\ell\in\rond L$,  $r\in\rond R$ and $v,w\in\rond W$, the concatenations of $\ell$ and $w$, $v$ and $w$, $w$ and $r$ are respectively denoted by $\ell w$, $vw$ and $wr$.
More over, the finite word $w$ being given,
\[
\rond Lw
\]
denotes the cylinder made of left-infinite words having $w$ as a suffix.
Finally, when $w=\alpha _1\cdots\alpha _d\in\rond W$, $\ell =\cdots\alpha _{-2}\alpha _{-1}\alpha _{0}\in\rond L$ or $r=\alpha _0\alpha _1\alpha _2\cdots\in\rond R$, the bar denotes the mirror object :
\begin{align*}
&\overline w=\alpha _d\alpha _{d-1}\cdots\alpha _{1}\in\rond W,\\
&\overline\ell =\alpha _{0}\alpha _{-1}\alpha _{-2}\cdots\in\rond R,\\
&\overline r=\cdots\alpha _{2}\alpha _{1}\alpha _{0}\in\rond L.
\end{align*}

A VLMC is an $\rond L$-valued Markov chain, defined by a so-called \emph{probabilised context tree}.
We give here a compact description.
One can refer to~\cite{cenac/chauvin/paccaut/pouyanne/12} for an extensive definition.

A \emph{context tree} is a rooted binary tree $\rond T$ which has an \emph{at most countable set of infinite branches};
an infinite sequence $r\in\rond R$ is an \emph{infinite branch} of $\rond T$ whenever all its finite prefixes belong to $\rond T$.
As usual, the nodes of the tree are canonically labelled by words on $\rond A$.
In the example of Figure~\ref{fig:ex0}, the tree has two infinite branches: $(01)^\infty$ and $1^\infty$.
A node of a context tree $\rond T$ will be called a \emph{context} when it is a finite leaf or an infinite branch of $\rond T$.
The sets of all contexts, finite leaves and infinite branches are respectively denoted by
\[
\rond C,~\rond C^f
{\rm ~and~}
\rond C^i.
\]
They are all at most countable sets.
A finite word $w\in\rond W$ will be called an \emph{internal node} when it is \emph{strictly} internal as a node of $\rond T$;
it will be called \emph{nonexternal} whenever it is internal or a context.
In the same vein a finite word or a right-infinite sequence will be said \emph{external} when it is strictly external and \emph{noninternal} when it is external or a context.
The sets of internal words
is denoted by
\[
\rond I
.
\]
\begin{defi}[$\rpref$ of a noninternal word]
Let $\rond T$ be a context tree and $w$ be a noninternal finite or right-infinite word.
Then, we denote by $\rpref (w)$ the unique prefix of $w$ which is a context of $\rond T$.
\end{defi}

\begin{figure}
\begin{center}
\begin{tikzpicture}[scale=0.6]
\tikzset{every leaf node/.style={circle,fill,scale=0.5},every internal node/.style={circle,fill,circle,scale=0.01}}
\Tree
[.{} 
\edge[line width=2pt,red];[.{} 
{}; 
\edge[line width=2pt,red];[.{} 
\edge[line width=2pt,red];[.{} 
{}; 
\edge[line width=2pt,red];[.{} 
[.{} 
{}; 
[.{} 
[.{} 
{}; 
[.{} 
\edge[line width=2pt,dashed];\node[fill=white,draw=white]{};
{}; 
]
]
{}; 
]
]
\edge[line width=2pt,red];\node{};
]
]
{}; 
]
]
[.{} 
{}; 
[.{} 
{}; 
[.{} 
{}; 
[.{} 
{}; 
[.{} 
{}; 
[.{} 
{}; 
[.{} 
{}; 
[.{} 
{}; 
\edge[line width=2pt,dashed];\node[fill=white,draw=white]{};
]
]
]
]
]
]
]
]
]
\draw (0,0.5) node{$\emptyset$};
\draw (-1.7,-0.8) node{$0$};
\draw (1.6,-0.8) node{$1$};
\draw (-2.9,-2.4) node{$00$};
\draw (2.2,-2) node{$11$};
\draw (-2.9,-4.4) node{$0100$};
\draw (2.5,-8.5) node{$1^70$};
\draw (4.5,-9.5) node{$1^\infty$};
\draw (-1.5,-9.8) node{$(01)^\infty$};
\draw (0.5,-5.5) node{$c$};
\draw (10,-5) node{$c=01011=\rpref ({\color{red}01011}1101000\cdots)$};
\end{tikzpicture}
\end{center}
\caption{an example of context tree.
It has two infinite branches: $1^\infty$ and $(01)^\infty$.}
\label{fig:ex0}
\end{figure}
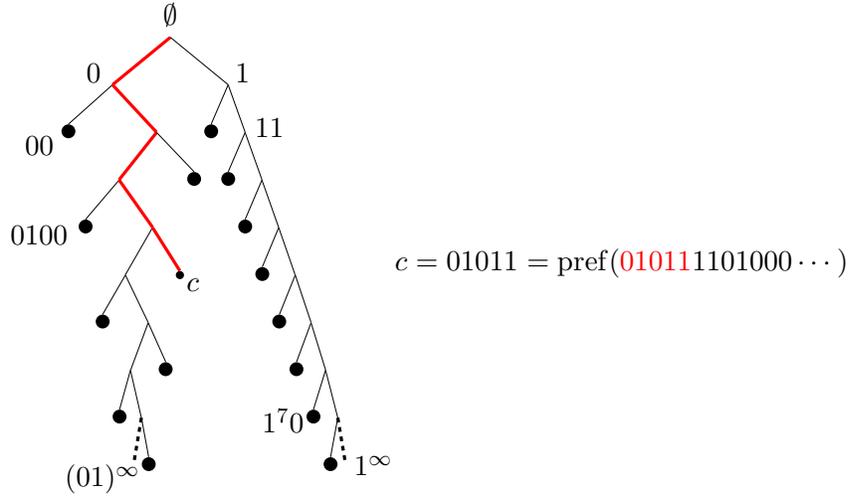

\begin{defi}[shift mapping]
The shift mapping $\sigma:\rond R\to\rond R$ is defined by $\sigma\left(\alpha _0\alpha _1\alpha _2\cdots\right)=\alpha _1\alpha _2\cdots$.
The definition is extended to finite words (with $\sigma(\emptyset)=\emptyset$).
\end{defi}

A \emph{probablised context tree} is a context tree $\rond T$ endowed with a family of probability measures $\left( q_c\right) _{c\in\rond C}$ on $\rond A$ indexed by the (finite or infinite) contexts of $\rond T$.
To any probabilised context tree, one can associate a VLMC, which is the $\rond L$-valued Markov chain $\left( U_n\right)_{n\geq 0}$ defined by its transition probabilities given by
\begin{equation}
\forall n\geq 0,~ \proba\left( U_{n+1}=U_n\alpha |U_n\right)
=q_{\rpref\left(\overline{U_n}\right)}(\alpha ).
\end{equation}

Note that if one denotes by $X_n$ the rightmost letter of the sequence $U_n\in\rond L$ so that
\[
\forall n\geq 0,~U_{n+1}=U_nX_{n+1},
\]
then, when the context tree has at least one infinite context, the final letter process $(X_n)_{n\geq 0}$ is generally \emph{not} a Markov process.
When the tree is finite, $(X_n)_{n\geq 0}$ is a usual $\rond A$-valued Markov chain whose order is the height of the tree, \emph{i.e.} the length of its longest branch.
The vocable VLMC is somehow confusing but commonly used.

\begin{defi}[non-nullness]\label{defi:non-null}
	A probabilised context tree $(\rond T,q)$ is \emph{non-null} if for all $c\in\rond C$ and all $\alpha\in\rond A$, $q_c(\alpha)\neq 0$.
\end{defi}

\begin{rem}\label{rem:bar}
It would have been probably simpler to define a VLMC as an $\rond R$-valued Markov chain $\left( U_n\right) _n$ taking as transition probabilities:
$\proba\left( U_{n+1}=\alpha U_n|U_n\right) =q_{\rpref\left({U_n}\right)}(\alpha )$.
With this definition, words would have grown by adding a letter to the left, and everything would have been read from left to right, avoiding the emergence of bars in the text ($\overline w$, $\overline{U_n}$, \dots ).
We decided to adopt our convention ($U_n$ is $\rond L$-valued) in order to fit the already existing literature on VLMC and more generally on stochastic processes.
\end{rem}

\subsection{Cascades, lis and $\alpha$-lis}

\begin{defi}[lis and $\alpha$-lis]\label{def:alphalis}
Let $\rond T$ be a context tree.
If $w\in\rond W$ is a non empty finite word, $w$ can be uniquely written as 
\[
w=\beta _1\beta _2\dots\beta _{p_w}\alpha _ws_w,
\]
where
 
$\bullet$ $p_w\geq 0$ and $\beta_i\in \rond A$, for all $i\in\{ 1, \dots , p_w\}$, 

$\bullet$ $\alpha _w\in\rond A$,

$\bullet$ $s_w$ is the longest internal strict suffix of $w$.

Note that $s_w$ may be empty.
When $p_w=0$, there are no $\beta$'s and $w=\alpha _ws_w$.

Vocabulary:
the longest internal suffix $s_w$ is abbreviated as the \emph{lis} of $w$;
the noninternal suffix $\alpha _ws_w$ is the \emph{$\alpha$-lis} of $w$.
\end{defi}
Any word has an $\alpha$-lis, but we will be mainly interested by the $\alpha$-lis of \emph{contexts}.
The set of $\alpha$-lis \emph{of the finite contexts} of $\rond T$ will be denoted by $\rond S\left(\rond C\right)$, or more shortly by $\rond S$:
\[
\rond S=\left\{ \alpha _cs_c,~c\in\rond C^f\right\};
\]
this is an at most countable set (like $\rond C$).
For any $u,v,w\in\rond W$, the notations
\begin{equation}
\label{notationsPrefLis}
v=u\cdots
{\rm ~~and~}
w=\cdots [u]
\end{equation}
stand respectively for ``$u$ is a prefix of $v$'' and ``$u$ is the $\alpha$-lis of $w$''.

\vskip 10pt
\begin{exa}[computation of a lis]\label{exa:lis}\

\begin{minipage}{0.4\textwidth}
\centering
\begin{tikzpicture}[scale=0.45]
		\tikzset{every leaf node/.style={draw,circle,fill},every internal node/.style={draw,circle,scale=0.01}}
		\Tree [.{} [.{} {} [.{} [.{} {} [.{} [.{} \node[fill=red,draw=black]{}; [.{} [.{} {} [.{} \edge[line width=2pt,dashed];\node[fill=white,draw=white]{}; \edge[line width=2pt,dashed];\node[fill=white,draw=white]{}; ] ] {} ] ] {} ] ] [.{} {} [.{} {} [.{} {} [.{} {} [.{} {} [.{} \edge[line width=2pt,dashed];\node[fill=white,draw=white]{}; \edge[line width=2pt,dashed];\node[fill=white,draw=white]{}; ] ] ] ] ] ] ] ] [.{} [.{} {} {} ] [.{} [.{} {} {} ] [.{} [.{} {} {} ] [.{} [.{} {} {} ] [.{} [.{} {} {} ] [.{} [.{} {} {} ] [.{} \edge[line width=2pt,dashed];\node[fill=white,draw=white]{}; \edge[line width=2pt,dashed];\node[fill=white,draw=white]{}; ] ] ] ] ] ] ] ]
\end{tikzpicture}
\end{minipage}
\begin{minipage}{0.6\textwidth}
In this example, the finite contexts of the context tree are the following ones (they completely define the context tree):
$(01)^p00$, $(01)^r1$, $01^r0$, $1^q00$, $1^q01$, $p\geq 0$, $q\geq 1$, $r\geq 2$.

Take for example the context $010100$. Remove successively letters from the left until you get an internal word: $10100$ is external, $0100$ is noninternal, $100$ is noninternal, $00$ is noninternal.
The suffix $0$, for the first time, is internal:
this is the lis of $010100$.
The last removed letter is $\alpha=0$ so that the $\alpha$-lis is $00$.

In the array hereunder, the left side column consists in the list of all $\alpha$-lis of the context tree.
For any $\alpha s\in\rond S$, the list of all contexts having $\alpha s$ as an $\alpha$-lis is given in the right side column.

\begin{center}
\begin{tabular}{r|l}
$\alpha s\in\rond S$&contexts having $\alpha s$ as an $\alpha$-lis\\
\hline
$00$&$1^q00$, $(01)^p00$, $p\geq 0$, $q\geq 1$\\
$101$&$1^q01, q\geq 1$\\
$01011$&$(01)^r1$, $r\geq 2$\\
$01^r0$, $r\geq 2$&$01^r0$
\end{tabular}
\end{center}
\end{minipage}
\end{exa}

\begin{defi}[cascade]\label{def:cascade}
Let $\left(\rond T,q\right)$ be a probabilised context tree.
If $w\in\rond W$ writes $w=\beta _1\beta _2\dots\beta _{p}\alpha s$ where $p\geq 0$ and where $\alpha s$ is the $\alpha$-lis of $w$, the \emph{cascade} of $w$ is defined as
\[
\casc (w)=\prod _{1\leq k\leq p}q_{\rpref\sigma ^k(w)}(\beta _k),
\]
where an empty product equals $1$, which occurs if, and only if $w$ equals its own $\alpha$-lis.
The cascade of $\emptyset$ is defined as being $1$.
Note that $\casc (\alpha s) = 1$ for any $\alpha s\in\rond S$.
\end{defi}

In Example \ref{exa:lis}, $\casc(010100)=q_{101}(0)q_{0100}(1)q_{100}(0)q_{00}(1)$.

\begin{rem}
\label{rem:cascRem}
For any $w\in\rond W$, $\casc (w)=\casc (0w)+\casc (1w)$ if, and only if $w$ is noninternal.
Indeed, if $w$ is internal, the sum equals $2$ whereas $\casc (w)\leq 1$.
\end{rem}

\begin{defi}[cascade series]
\label{defi:cascade_series}
For every  $\alpha s\in\rond S$, the \emph{cascade series of $\alpha s$} (related to $(\rond T,q)$) is the at most countable family of cascades of the finite contexts having $\alpha s$ as their $\alpha$-lis.
In other words, with notations~\eqref{notationsPrefLis}, it is the family
\[
\left(\casc (c)\right) _{c\in\rond C^f,~c=\cdots [\alpha s]}.
\]
\end{defi}

Since the cascades are positive numbers, the summability of a family of cascades of a probabilised context tree is equivalent to the convergence of the series associated to any total order on the set of contexts indexing the family.
The assertion
\begin{equation}
\label{convCasc}
\forall \alpha s\in\rond S,~\sum _{c\in\rond C^f,~c=\cdots [\alpha s]}\casc (c)<+\infty
\end{equation}
will be called \emph{convergence of the cascade series}.
When the cascade series converge, $\kappa _{\alpha s}$ denotes the sum of the cascade series relative to $\alpha s\in\rond S$:
\begin{equation}
\label{defKappa}
\kappa _{\alpha s}=\sum _{c\in\rond C^f,~c=\cdots [\alpha s]}\casc (c).
\end{equation}

\begin{rem}
The finiteness of the set $\rond C^i$ of infinite branches on one side, and of the set $\rond S$ of context $\alpha$-lis on the other side are not related.
In Section~\ref{subsubsec:SfiniCIinfini}, one finds an example of context tree for which $\rond S$ is finite while $\rond C^i$ is infinite.
In the example of Section~\ref{subsubsec:SinfiniCIfini}, $\rond S$ is infinite while $\rond C^i$ is finite.
The left-comb of left-combs has infinite $\rond C^i$ and $\rond S$ (see Section~\ref{subsubsec:pgPg}).
Finally, the double bamboo has finite $\rond C^i$ and $\rond S$, see Example~\ref{exa:dbamboo}.
\end{rem}

\subsection{$\alpha$-lis matrix $Q$}
\label{subsec:Q}

For any $(\alpha s,\beta t)\in\rond S^2$, with notations~\eqref{notationsPrefLis}, define
\[
\label{defQ}
Q_{\alpha s,\beta t}
=\sum _{\substack{c\in\rond C^f\\c=t\cdots\\c=\cdots [\alpha s]}}
\casc\left(\beta c\right)
\in [0,+\infty ].
\]
As the set $\rond S$ is at most countable, the family $Q=\left( Q_{\alpha s,\beta t}\right) _{(\alpha s,\beta t)\in\rond S^2}$ will be considered as a matrix, finite or countable, for an arbitrary order.
The convergence of the cascade series of $(\rond T,q)$ is sufficient to ensure the finiteness of the coefficients of $Q$.

\section{Results for a general context tree}
\label{sec:general}

In this section no assumption is made on the shape of the context tree. After two key lemmas, we state and prove the main theorem that establishes precise connections between stationary probability measures of the VLMC and left-fixed vectors of the matrix $Q$ defined in Subsection~\ref{subsec:Q}.

\subsection{Two key lemmas}

\begin{lem}
\label{lem:cascade}
{\bf (Cascade formulae)}

Let $(\rond T,q)$ be a probabilised context tree and $\pi$ be a stationary probability measure for the corresponding VLMC.

(i)
For every noninternal finite word $w$ and for every $\alpha\in\rond A$,
\begin{equation}
\label{cascade1}
\pi\left(\rond L\overline{w}\alpha\right) =q_{\rpref (w)}(\alpha )\pi\left(\rond L\overline{w}\right) .
\end{equation}

(ii)
For every right-infinite word $r\in\rond R$ and for every $\alpha\in\rond A$,
\begin{equation}
\label{cascade2}
\pi\left(\overline{r}\alpha\right) =q_{\rpref (r)}(\alpha )\pi\left(\overline{r}\right) .
\end{equation}

(iii)
For every finite non empty word $w$, if one denotes by $\alpha_ws_w$ the $\alpha$-lis of $w$, then
\begin{equation}
\label{cascade3}
\pi\left(\rond L\overline{w}\right) =\casc(w)\pi\left(\rond L\overline{\alpha_ws_w}\right) .
\end{equation}
\end{lem}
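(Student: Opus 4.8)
The plan is to prove the three formulae in order, with (i) as the workhorse and (ii), (iii) as consequences. For (i), I would start from the stationarity of $\pi$: for any cylinder $\rond L w$, the measure $\pi(\rond L w)$ equals the $\pi$-probability that $U_1$ lies in $\rond L w$ when $U_0$ is distributed according to $\pi$. Decompose $\rond L\overline w\alpha$ according to the preimages under one step of the chain. Since $U_{n+1}=U_nX_{n+1}$, a left-infinite word ending in $\overline w\alpha$ is reached in one step precisely from a word ending in $\overline w$, and the transition probability is $q_{\rpref(\overline{U_n})}(\alpha)$. The key point is that for \emph{every} left-infinite word $\ell$ whose suffix is $\overline w$ (read with the bar convention, i.e.\ $\overline\ell$ has prefix $w$), the quantity $\rpref(\overline\ell)$ depends only on $w$ and equals $\rpref(w)$ — this is exactly where the hypothesis that $w$ is \emph{noninternal} enters, since then $w$ already contains a context as a prefix, so $\rpref(\overline\ell)=\rpref(w)$ regardless of how $\ell$ continues to the left. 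Hence
\[
\pi(\rond L\overline w\alpha)=\int_{\rond L\overline w}\proba(U_1=U_0\alpha\mid U_0)\,d\pi(U_0)=q_{\rpref(w)}(\alpha)\,\pi(\rond L\overline w),
\]
which is \eqref{cascade1}. I should be a little careful that stationarity is applied correctly (one-step invariance of $\pi$) and that the $\sigma$-algebra generated by cylinders makes the above disintegration legitimate; this is routine but worth one sentence.

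For (ii), the same argument applies verbatim with the cylinder $\rond L\overline w$ replaced by the singleton-type event $\overline r$ (the set of left-infinite words equal to $\overline r$): a right-infinite $r$ is automatically noninternal (an infinite branch is a context, or it is strictly external), so $\rpref(r)$ is well defined, and the one-step decomposition gives $\pi(\overline r\alpha)=q_{\rpref(r)}(\alpha)\pi(\overline r)$. Alternatively one obtains \eqref{cascade2} from \eqref{cascade1} by a monotone limit over the finite prefixes of $r$, using continuity of $\pi$ from above along the decreasing sequence of cylinders; either route is short.

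For (iii), I would iterate (i). Write $w=\beta_1\beta_2\cdots\beta_{p_w}\alpha_ws_w$ as in Definition~\ref{def:alphalis}. For each $k$, the word $\sigma^k(w)=\beta_{k+1}\cdots\beta_{p_w}\alpha_ws_w$ is noninternal: indeed $\sigma^{p_w}(w)=\alpha_ws_w$ is noninternal by maximality of the lis $s_w$, and removing further leading letters keeps it noninternal (its suffix $\alpha_ws_w$ is already noninternal, hence so is any word having it as a suffix). Applying \eqref{cascade1} with $w$ there equal to $\sigma^{k}(w)$ and $\alpha=\beta_{k}$ peels off one letter at a time:
\[
\pi(\rond L\overline w)=q_{\rpref\sigma^1(w)}(\beta_1)\,\pi\!\left(\rond L\overline{\sigma^1(w)}\right)=\cdots=\left(\prod_{1\le k\le p_w}q_{\rpref\sigma^k(w)}(\beta_k)\right)\pi\!\left(\rond L\overline{\alpha_ws_w}\right),
\]
and the product is exactly $\casc(w)$ by Definition~\ref{def:cascade}. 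When $p_w=0$ the statement is the trivial identity $\pi(\rond L\overline w)=\pi(\rond L\overline w)$ with $\casc(w)=1$.

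The main obstacle — really the only substantive point — is the verification in (i) that $\rpref(\overline\ell)$ is constant over $\ell\in\rond L\overline w$ and equals $\rpref(w)$, i.e.\ the noninternality hypothesis is precisely what makes the transition probability measurable-with-constant-value on the cylinder, so that stationarity collapses to the clean product formula. Everything after that is bookkeeping: the induction in (iii) and the elementary fact that noninternality is inherited by words having a noninternal suffix.
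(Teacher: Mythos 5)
Your overall strategy coincides with the paper's: (i) is obtained from one-step stationarity together with the observation that $\rpref(\overline\ell)=\rpref(w)$ for every $\ell\in\rond L\overline w$ when $w$ is noninternal, (ii) by the same conditioning applied to the event $\{\overline r\}$, and (iii) by iterating (i) along the decomposition $w=\beta_1\cdots\beta_{p_w}\alpha_ws_w$. Two of your justifications, however, are not correct as stated.

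First, the ``elementary fact that noninternality is inherited by words having a noninternal suffix'' is false for a general (non-stable) context tree. Take the tree with contexts $\{0,11,100,101\}$: the word $0$ is a context, hence noninternal, yet $10$, which admits $0$ as a suffix, is strictly internal. (This inheritance property is essentially characterization (ii) of stability in Proposition~\ref{prop:defstable}, so it cannot be invoked in a lemma stated for arbitrary trees.) The conclusion you need is nevertheless immediate without it: for $1\le k\le p_w$, the word $\sigma^k(w)$ is a strict suffix of $w$ of length strictly greater than $|s_w|$, hence noninternal by the very definition of $s_w$ as the \emph{longest} internal strict suffix. With that repair, your induction for (iii) is exactly the paper's proof.

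Second, the alternative route you offer for (ii) --- passing to the limit in \eqref{cascade1} along the finite prefixes $w_n$ of $r$ --- breaks down precisely in the delicate case where $r$ is an infinite branch of $\rond T$: then every finite prefix $w_n$ is strictly internal, so \eqref{cascade1} applies to none of them. Your primary argument (direct conditioning on $U_0=\overline r$, using that $\rpref(r)$ is defined for any right-infinite word, possibly as an infinite context) is the correct one and is what the paper does.
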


\begin{proof}[Proof of lemma \ref{lem:cascade}]

(i)
Assume first that $\pi\left(\rond L\overline{w}\right)\neq 0$.
Then, since $w$ is noninternal, $\rpref (w)$ is well defined so that, by stationarity,
\begin{align*}
\pi\left(\rond L\overline{w}\alpha\right)
&=\proba _\pi\left( U_1\in\rond L\overline{w}\alpha\right)\\
&=\proba _\pi\left( U_{1}\in\rond L\overline{w}\alpha |U_0\in\rond L\overline w\right)
\proba _\pi\left( U_0\in\rond L\overline{w}\right)\\
&=q_{\rpref\left( w\right)}(\alpha)\proba _\pi\left( U_0\in\rond L\overline{w}\right)
=q_{\rpref\left( w\right)}(\alpha)\pi\left(\rond L\overline{w}\right)
\end{align*}
proving~\eqref{cascade1}.
If $\pi\left(\rond L\overline{w}\right) =0$, then, by stationarity, $\pi\left(\rond L\overline{w}\alpha\right) =\proba _\pi\left( U_1\in\rond L\overline w\alpha\right)\leq\proba _\pi\left( U_0\in\rond L\overline w\right)=0$ so that~\eqref{cascade1} remains true.

(ii)
Since $r$ is infinite, the context $\rpref (r)$ is always defined
(it may be finite or infinite).
Consequently,
\begin{align*}
\pi\left(\overline{r}\alpha\right)
&=\Proba _\pi\left( U_1=\overline{r}\alpha\right)\\
&=\Espe _\pi\Big(\Espe _\pi\left( \indic _{\left\{ U_1=\overline r\alpha\right\}} |U_0\right)\Big)\\
&=\Espe _\pi\left( \indic _{\left\{ U_0=\overline r\right\}}\Proba _\pi\left( U_1=U_0\hskip 1pt\alpha |U_0\right)\right)
=q_{\rpref (r)}(\alpha )\pi\left(\overline{r}\right) .
\end{align*}

(iii)
Direct induction from Formula~\eqref{cascade1}.
\end{proof}

The following lemma ensures that a stationary probability measure weights finite words and only finite words.
\begin{lem}
\label{lem:pineq0}
Let $(\rond T,q)$ be a non-null probabilised context tree.
Assume that $\pi$ is a stationary probability measure
for the associated VLMC. Then 

(i) $\forall w\in\rond W$, $\pi\left(\rond L\overline{w}\right)\neq 0$.

(ii) $\forall r\in\rond R$, $\pi (\overline r)=0$.
\end{lem}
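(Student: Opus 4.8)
The plan is to prove both assertions by exploiting stationarity together with the non-nullness hypothesis, moving ``down'' and ``up'' the tree of cylinders.

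For part (i), I would argue that the set of finite words $w$ with $\pi(\rond L\overline w)=0$ is closed under a natural propagation and then show it is empty. First, I would use stationarity to observe the ``monotonicity'' already used in the proof of Lemma~\ref{lem:cascade}(i): if $\pi(\rond L\overline w)=0$ then $\pi(\rond L\overline{w}\alpha)=0$ for each letter $\alpha$, since $\proba_\pi(U_1\in\rond L\overline w\alpha)\le\proba_\pi(U_0\in\rond L\overline w)$. This shows the ``bad set'' of words $\overline w$ with zero mass is stable under appending a letter on the right, i.e.\@ stable under taking longer suffixes of the $U_n$'s. The key extra ingredient is a reverse implication coming from non-nullness: by the disjoint union $\rond L\overline w=\rond L\overline w0\sqcup\rond L\overline w1$ and the cascade formula~\eqref{cascade1} applied when $w$ is noninternal, $\pi(\rond L\overline w\alpha)=q_{\rpref(w)}(\alpha)\pi(\rond L\overline w)$ with $q_{\rpref(w)}(\alpha)\neq0$, so $\pi(\rond L\overline w\alpha)=0$ forces $\pi(\rond L\overline w)=0$. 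Hence among noninternal words, having zero mass propagates to strictly shorter suffixes as well. Since every word has a noninternal suffix (indeed the $\alpha$-lis $\alpha_w s_w$ is noninternal, or one may simply note that once a suffix is long enough it is external, hence noninternal), I can climb down from any $w$ to a noninternal suffix and then, if its mass were zero, climb all the way up using non-nullness to conclude $\pi(\rond L)=\pi(\rond L\overline\emptyset)=0$, contradicting $\pi$ being a probability measure. One must be a little careful with the step from an arbitrary $w$ to a noninternal suffix: the cleanest route is to use~\eqref{cascade3}, $\pi(\rond L\overline w)=\casc(w)\pi(\rond L\overline{\alpha_ws_w})$, together with $\casc(w)>0$ (a finite product of nonzero $q_c(\beta_k)$'s, by non-nullness), so $\pi(\rond L\overline w)=0$ iff $\pi(\rond L\overline{\alpha_w s_w})=0$; this reduces everything to showing $\pi(\rond L\overline{\alpha s})\neq0$ for $\alpha s$ noninternal, which is handled by the upward propagation above starting from $\rond L\overline{\alpha s}\supseteq\rond L\overline{\alpha s}\gamma$ for appropriate $\gamma$ until one reaches $\emptyset$.

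For part (ii), I would show that each singleton $\{\overline r\}$ has zero mass by squeezing it between cylinders of decreasing mass. Writing $r=\alpha_0\alpha_1\alpha_2\cdots$ and $r_{(n)}=\alpha_0\cdots\alpha_{n-1}$ for its length-$n$ prefix, we have $\{\overline r\}\subseteq\rond L\overline{r_{(n)}}$ for every $n$, so $\pi(\overline r)\le\inf_n\pi(\rond L\overline{r_{(n)}})$. By part (i) these masses are all positive, so I need them to tend to $0$. Using~\eqref{cascade1} repeatedly (valid at each step where the relevant prefix is noninternal, and every sufficiently long prefix of an infinite branch that is not itself the branch is external hence noninternal), $\pi(\rond L\overline{r_{(n+1)}})=q_{\rpref(r_{(n)})}(\alpha_n)\,\pi(\rond L\overline{r_{(n)}})$, so $\pi(\rond L\overline{r_{(n)}})$ is a nonincreasing product of factors in $(0,1)$; the statement amounts to this product having limit $0$.

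\textbf{Main obstacle.} The delicate point is exactly that last limit in part~(ii): a priori the factors $q_{\rpref(r_{(n)})}(\alpha_n)$ could approach $1$ fast enough that the product stays bounded away from $0$, which would give $\overline r$ positive mass — so some genuine argument, not just non-nullness, is needed. I expect the resolution to use countable additivity of $\pi$ more globally: for instance, summing the cascade identities over all finite contexts, or over a generating partition of $\rond L$ into cylinders $\rond L\overline c$ for $c$ ranging over contexts, the total must be $1$, and this forces the ``escaping mass'' along any single infinite branch to vanish; equivalently, one decomposes $\rond L\overline{r_{(n)}}$ as the union of $\{\overline r\}$ and cylinders $\rond L\overline{r_{(k)}\gamma_k}$ ($\gamma_k\neq\alpha_k$) branching off the path, whose $\pi$-masses sum to a convergent series, whence $\pi(\rond L\overline{r_{(n)}})-\pi(\overline r)\to$ a finite tail that must vanish, giving $\pi(\overline r)=\lim_n\pi(\rond L\overline{r_{(n)}})$ and then, comparing again, $\pi(\overline r)=0$ unless the branch carries an atom — which is excluded because $\rond L$ being a Cantor-type space, an atom on $\overline r$ together with positivity on all its neighbourhoods contradicts, via~\eqref{cascade1} and non-nullness, the summability forced by $\pi(\rond L)=1$. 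I would present this as the crux of the proof and expand the bookkeeping there carefully.
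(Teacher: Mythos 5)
Your proposal has genuine gaps in both parts, and in each case the missing step is precisely the heart of the paper's argument.

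For (i), your propagation machinery only moves through \emph{noninternal} words: the cascade formula~\eqref{cascade1} reads $\pi(\rond L\overline{w}\alpha)=q_{\rpref(w)}(\alpha)\pi(\rond L\overline{w})$ and is only valid when $w$ is noninternal (otherwise $\rpref(w)$ is undefined). After reducing to the $\alpha$-lis via~\eqref{cascade3}, you must show $\pi(\rond L\overline{\alpha s})\neq 0$ where $s=s_w$ is \emph{internal}; the very first step of your ``climb to $\emptyset$'' would be $\pi(\rond L\overline{s}\alpha)=q_{\rpref(s)}(\alpha)\pi(\rond L\overline s)$, which is unavailable since $s\in\rond I$ --- and in any case the empty word (the root) is itself internal, so no chain of noninternal suffixes can reach it. The paper closes exactly this gap with a separate argument for internal $w$: decompose $\rond L\overline{w}\alpha$ as the disjoint at most countable union over all contexts $c$ having $w$ as a prefix, apply stationarity to each piece to get $\sum_{c\in\rond C^f}\pi(\rond L\overline c)q_c(\alpha)+\sum_{c\in\rond C^i}\pi(\overline c)q_c(\alpha)=0$, and use non-nullness to kill every term, whence $\pi(\rond L\overline w)=0$ by re-summing. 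Nothing in your proposal substitutes for this step.

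For (ii), you correctly isolate the obstacle --- under mere non-nullness the product of the factors $q_{\rpref(\cdot)}(\alpha_n)$ need not tend to $0$ --- but your proposed resolution does not close it: the decomposition into branching cylinders only re-proves continuity from above, i.e.\ $\pi(\overline r)=\lim_n\pi(\rond L\overline{r_{(n)}})$, and the final sentence about atoms is circular ($\pi(\overline r)>0$ \emph{is} the atom, so ``$\pi(\overline r)=0$ unless the branch carries an atom'' asserts nothing). The paper's actual argument travels along $r$ in the opposite direction: by the singleton cascade formula~\eqref{cascade2}, $\pi(\overline r)\le\pi(\overline{r_{m}})$ for every \emph{suffix} $r_m$ of $r$; if $r$ is ultimately periodic one gets a fixed-point relation $\pi(\overline{t^\infty})=\rho\,\pi(\overline{t^\infty})$ with $0<\rho<1$, and if $r$ is aperiodic the $\overline{r_m}$ are pairwise distinct points, so $\sum_m\pi(\overline{r_m})\le 1$ forces $\pi(\overline{r_m})\to 0$. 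You would need to find this suffix/periodicity dichotomy (or an equivalent mechanism) to complete your part (ii); the prefix-based route you sketch does not lead there.
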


\begin{proof}[Proof of lemma \ref{lem:pineq0}]
(i) We prove that if $w$ is a finite word and if $\alpha\in\rond A$, then $\left[\pi\left(\rond L\overline w\alpha\right) =0\right]\Rightarrow\left[\pi \left(\rond L\overline w\right)=0\right]$.
An induction on the length of $w$ is then sufficient to prove the result since  $\pi (\rond L)=1$.
\begin{enumerate}
\item
Assume that $w\notin\rond I$ and that $\pi\left(\rond L\overline w\alpha\right) =0$.
Then, as a consequence of the cascade formula~\eqref{cascade1}, $0=\pi\left(\rond L\overline w\right)q_{\rpref (w)}(\alpha )$.
As no $q_c$ vanishes, $\pi\left(\rond L\overline w\right) =0$.
\item
Assume now $w\in\rond I$ and $\pi\left(\rond L\overline w\alpha\right) =0$.
Then, by disjoint union and stationarity of $\pi$,
\[
0=\sum _{\substack{c\in\rond C^f\\c=w\dots}}\pi\left(\rond L\overline c\alpha\right)
+\sum _{\substack{c\in\rond C^i\\c=w\dots}}\pi\left(\overline c\alpha\right)
=\sum _{\substack{c\in\rond C^f\\c=w\dots}}\pi\left(\rond L\overline c\right)q_c(\alpha )
+\sum _{\substack{c\in\rond C^i\\c=w\dots}}\pi\left(\overline c\right)q_c(\alpha ).
\]
As no $q_c$ vanishes, all the $\pi\left(\rond L\overline c\right)$ and the $\pi (\overline c)$ necessarily vanish so that, by disjoint union,
\[
\pi\left(\rond L\overline w\right)
=\sum _{\substack{c\in\rond C^f\\c=w\dots}}\pi\left(\rond L\overline c\right)
+\sum _{\substack{c\in\rond C^i\\c=w\dots}}\pi\left(\overline c\right)
=0.
\]
\end{enumerate}

(ii) Denote $r=\alpha _1\alpha _2\cdots$ and $r_n=\alpha _n\alpha _{n+1}\cdots$ its $n$-th suffix, for every $n\geq 1$.
Since $\pi$ is stationary, an elementary induction from Formula~\eqref{cascade2} implies that, for every $m\geq 1$,
\begin{equation}
\label{pir}
\pi\left(\overline r\right)
=\left(\prod _{k=1}^mq_{\rpref (r_{k+1})}\left(\alpha _k\right)\right)\pi\left(\overline{r_{m+1}}\right).
\end{equation}

\begin{enumerate}
\item
Assume first that $r=st^\infty$ is ultimately periodic, where $s$ and $t$ are finite words, $t=\beta _1\cdots\beta _T$ being nonempty.
Then, because of~\eqref{pir}, $\pi (\overline r)\leq\pi\left(\overline t^\infty\right)$ and $\pi\left(\overline t^\infty\right)=\rho\pi\left(\overline t^\infty\right)$ where
\[
\rho=
\prod _{k=1}^T
q_{\rpref (\beta _{k+1}\cdots\beta _Tt^\infty)}\left(\beta _{k}\right).
\]
Since the probability measures $q_c$ are all assumed to be nontrivial, then $0<\rho <1$, which implies that $\pi\left(\overline t^\infty\right)=0$.

\item
Assume on the contrary that $r$ is aperiodic.
Then, $m\neq n\Longrightarrow r_n\neq r_m$ for all $n,m\geq 1$:
the $r_n$ are all distinct among the infinite branches of the context tree.
Thus, by disjoint union, 
\[
\sum _{n\geq 1}\pi\left(\overline{r_n}\right)
\leq\sum _{c\in\rond C^i}\pi\left(\overline c\right)
\leq \pi\left(\rond L\right)=1,
\]
which implies in particular that $\pi\left(\overline{r_n}\right)$ tends to $0$ when $n$ tends to infinity.
Since $\pi\left(\overline r\right)\leq\pi\left(\overline{r_n}\right)$ because of Formula~\eqref{pir}, this leads directly to the result.
\end{enumerate}
\end{proof}

\subsection{Main theorem}

\begin{defi}
Let $A=\left( a_{\ell c}\right)_{(\ell ,c)\in\rond E^2}$ be a matrix with real entries, indexed by a totally ordered set $\rond E$ supposed to be finite or denumerable.
A \emph{left-fixed vector} of $A$ is a row-vector $X=(x_{k})_{k\in\rond E}\in\g R^{\rond E}$, indexed by $\rond E$, such that $XA=X$.
In particular, this implies that the matrix product $XA$ is well defined, which means that for any $c\in\rond E$, the series $\sum _\ell x_\ell a_{\ell ,c}$ is convergent.
Note that, whenever $X$ and $A$ are infinite dimensional and have nonnegative entries, this summability does not depend on the chosen order on the index set $\rond E$.
\end{defi}

\vskip 5pt
Denote by $\rond M_1\left(\rond L\right)$ the set of probability measures on $\rond L$.
\newcommand{\fff}{f}
Define the mapping $\fff$ as follows:
\[
\begin{array}{rcl}
\fff :\rond M_1\left(\rond L\right) & \longrightarrow &[0,1]^{\rond S}
\\
\pi & \longmapsto & \Big( \pi\left(\rond L\overline{\alpha s}\right)\Big) _{\alpha s\in\rond S}.
\end{array}
\]
\begin{theo}
\label{fQbij}
Let $(\rond T,q)$ be a non-null probabilised context tree and $U$ the associated VLMC.

\vskip 3pt
(i)
Assume that there exists a finite $U$-stationary probability measure $\pi$ on $\rond L$.
Then the cascade series~\eqref{convCasc} converge.
Furthermore, using notation~\eqref{defKappa},
\begin{equation}
\label{masseTotale}
\sum _{\alpha s\in\rond S}\pi\left(\rond L\overline{\alpha s}\right)
\kappa _{\alpha s}=1.
\end{equation}

\vskip 3pt
(ii)
Assume that the cascade series~\eqref{convCasc} converge.
Then, $f$ induces a bijection between the set of $U$-stationary probability
measures on $\rond L$ and the set of left-fixed vectors $\left( v_{\alpha s}\right)_{\alpha s\in\rond S}$ of $Q$ which satisfy\begin{equation}
\label{posrec}
\sum _{\alpha s\in\rond S}v_{\alpha s}\kappa _{\alpha s}=1.
\end{equation}
\end{theo}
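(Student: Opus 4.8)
The plan is to exploit the computations already sketched in the introduction and made rigorous by Lemmas \ref{lem:cascade} and \ref{lem:pineq0}, organizing them into the two implications.

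For part (i), I would start from a finite stationary $\pi$. By Lemma \ref{lem:pineq0}, $\pi$ gives zero mass to every right-infinite word and strictly positive mass to every cylinder $\rond L\overline w$; in particular, only the \emph{finite} contexts carry mass. Fix $\alpha s\in\rond S$. Using Formula \eqref{cascade3} applied to each finite context $c$ with $c=\cdots[\alpha s]$, I get $\pi(\rond L\overline c)=\casc(c)\,\pi(\rond L\overline{\alpha s})$. Summing over this (at most countable) family of contexts and using disjoint unions of the corresponding cylinders together with $\sum_{c\in\rond C^f}\pi(\rond L\overline c)\le\pi(\rond L)=1$, I obtain that $\pi(\rond L\overline{\alpha s})\sum_{c=\cdots[\alpha s]}\casc(c)\le 1$. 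Since $\pi(\rond L\overline{\alpha s})>0$ by Lemma \ref{lem:pineq0}(i), this forces the cascade series of $\alpha s$ to converge, giving \eqref{convCasc}. For \eqref{masseTotale}, I would argue that the cylinders $\{\rond L\overline c:c\in\rond C^f\}$ together with the null sets $\{\overline c:c\in\rond C^i\}$ partition $\rond L$ up to a $\pi$-null set (here one must check that a left-infinite word not caught by any finite context must be caught by some infinite branch, a saturation argument on the context tree); therefore $1=\sum_{c\in\rond C^f}\pi(\rond L\overline c)$, and regrouping contexts according to their common $\alpha$-lis and using $\pi(\rond L\overline c)=\casc(c)\pi(\rond L\overline{\alpha s})$ yields exactly $\sum_{\alpha s\in\rond S}\pi(\rond L\overline{\alpha s})\kappa_{\alpha s}=1$.

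For part (ii), assume the cascade series converge, so $Q$ has finite entries. I need three things: (a) if $\pi$ is stationary then $f(\pi)$ is a left-fixed vector of $Q$ satisfying \eqref{posrec}; (b) $f$ restricted to stationary measures is injective; (c) every left-fixed vector of $Q$ satisfying \eqref{posrec} is $f(\pi)$ for some stationary $\pi$, i.e.\ surjectivity. Point (a) is the chain of identities from the introduction: combine \eqref{cascade3} (applied to $\beta c$) with the disjoint-union Formula analogous to \eqref{formule:casc2} to get, for $\alpha s\in\rond S$,
\[
\pi(\rond L\overline{\alpha s})=\sum_{c\in\rond C^f,\ c=s\cdots}\casc(\alpha c)\,\pi(\rond L\overline{\alpha_c s_c})
=\sum_{\beta t\in\rond S}\pi(\rond L\overline{\beta t})\,Q_{\beta t,\alpha s},
\]
which says $f(\pi)Q=f(\pi)$; the normalization \eqref{posrec} is then \eqref{masseTotale} from part (i) (the hypotheses of (i) being met here, or re-derived directly). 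Point (b): by \eqref{cascade3} the values $\pi(\rond L\overline{\alpha s})$, $\alpha s\in\rond S$, determine $\pi(\rond L\overline c)$ for every finite context $c$; by the disjoint-union formula they then determine $\pi(\rond L\overline{\alpha v})$ for every internal word $v$; and by \eqref{cascade3} again (decomposing an arbitrary finite word via its $\alpha$-lis, which is the $\alpha$-lis of some context or an $\alpha$-lis of an internal word) they determine $\pi(\rond L\overline w)$ for all finite $w$, hence $\pi$ on all cylinders, hence $\pi$.

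Point (c) is the step I expect to be the main obstacle: given $(v_{\alpha s})$ with $v_{\alpha s}\ge 0$, $vQ=v$, and $\sum v_{\alpha s}\kappa_{\alpha s}=1$, one must \emph{construct} a genuine probability measure $\pi$ on $\rond L$ and verify it is stationary with $f(\pi)=v$. The construction is forced: set $\pi(\rond L\overline c):=\casc(c)\,v_{\alpha_c s_c}$ for $c\in\rond C^f$, then extend to all cylinders by running Formulas \eqref{cascade1}–\eqref{cascade3} "in reverse" (i.e.\ $\pi(\rond L\overline w):=\casc(w)\,\pi(\rond L\overline{\alpha_w s_w})$, with $\pi(\rond L\overline{\alpha v}):=\sum_{c=v\cdots}\pi(\rond L\overline c)q_c(\alpha)$ for internal $v$). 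The work is to check: consistency of this prescription (the Kolmogorov-type additivity $\pi(\rond L\overline w)=\pi(\rond L\overline w 0)+\pi(\rond L\overline w 1)$ for every finite $w$, which splits into the internal and noninternal cases — the noninternal case uses $q_c(0)+q_c(1)=1$ and Remark \ref{rem:cascRem}, the internal case uses the disjoint-union bookkeeping); that the total mass is $1$, which is exactly where $\sum v_{\alpha s}\kappa_{\alpha s}=1$ enters (summing $\pi(\rond L\overline c)=\casc(c)v_{\alpha_c s_c}$ over finite contexts and regrouping by $\alpha$-lis, together with the fact — from $vQ=v$ — that the infinite branches receive no mass, so the finite contexts already exhaust $\rond L$); and finally that the resulting $\pi$ satisfies \eqref{eq:def:VLMC}, i.e.\ is stationary, which is immediate from the way transitions were built in. Here the identity $vQ=v$ is used twice: once to guarantee the mass assigned "bottom-up" from the $v_{\alpha s}$ is consistent with the mass assigned "top-down" at the internal-node level, and once to guarantee no leakage to infinite branches. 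Checking all these consistencies carefully — especially that the two ways of computing $\pi(\rond L\overline{\alpha s})$ agree, which is precisely $vQ=v$ — is the technical heart; everything else is bookkeeping with disjoint unions and the cascade formulae.
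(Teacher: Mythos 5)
Your plan follows the paper's proof essentially step for step: part (i) via Lemma \ref{lem:pineq0} and the cascade formula \eqref{cascade3} with the finite contexts regrouped by $\alpha$-lis, and part (ii) via the same injectivity chain, the same fixed-vector identity for $f(\pi)$, and the same forced construction of $\mu$ extended by Kolmogorov's theorem, with exactly the consistency checks you correctly identify as the technical heart. The only small slip is attributing the absence of mass on infinite branches to $vQ=v$; in the paper this follows instead from the normalization $\sum_{\alpha s\in\rond S}v_{\alpha s}\kappa_{\alpha s}=1$ combined with the total mass of the extended measure being $1$.
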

For an example of application of this theorem, see Section~\ref{subsubsec:SfiniCIinfini}.
\begin{proof}[Proof of theorem~\ref{fQbij}.]~\\
Proof of {\it (i)}.
If $\pi$ is a stationary probability measure, 
disjoint union, Lemma~\ref{lem:pineq0}(i) and the cascade formula~\eqref{cascade3} imply that
\[
1=\sum _{c\in\rond C^f}\pi\left(\rond L\overline c\right)
=\sum _{c\in\rond C^f}\casc (c)\pi\left(\rond L\overline {\alpha _cs_c}\right).
\]
Gathering together all the contexts that have the same $\alpha$-lis leads to
\[
1=\sum _{\alpha s\in\rond S}\pi\left(\rond L\overline{\alpha s}\right)\left(\sum _{c\in\rond C^f,~c=\cdots [\alpha s]}\casc (c)\right).
\]
Now, by Lemma~\ref{lem:pineq0}(ii), $\pi\left(\rond L\overline {\alpha s}\right)\neq 0$ for all $\alpha s\in\rond S$. This forces the sums of cascades to be finite.

\vskip 5pt
Proof of {\it (ii)}.

1) \emph{Injectivity}.
Let $\pi$ be a stationary probability measure on $\rond L$.
As the cylinders based on finite words generate the whole $\sigma$-algebra, $\pi$ is determined by the $\pi\left(\rond L\overline w\right)$, $w\in\rond W$.
Now write any $w\in\rond W\setminus\{\emptyset\}$ as $w=p\alpha s$ where $\alpha s$ is the $\alpha$-lis of $w$ and $p\in\rond W$
(beware, $\alpha s$ may not be the $\alpha$-lis of a context).
As $\pi$ is stationary, the cascade formula~\eqref{cascade3} entails $\pi\left(\rond L\overline w\right) =\casc (w)\pi\left(\rond L\overline{\alpha s}\right)$.
As a consequence $\pi$ is determined by its values on the words $\overline{\alpha s}$ where $ s\in\rond I$ is internal and $\alpha\in\rond A$.
Now, as $ s\in\rond I$, by disjoint union, cascade formula~\eqref{cascade1} and Lemma~\ref{lem:pineq0}(i),
\[
\pi\left(\rond L\overline{\alpha s}\right)
=\pi\left(\rond L\overline{ s}\alpha\right)
=\sum _{c\in\rond C^f,~c= s\cdots}\pi\left(\rond L\overline{c}\right) q_c\left(\alpha\right).
\]
This means that $\pi$ is in fact determined by the $\pi\left(\rond L\overline c\right)$ where $c$ is a finite context.
Lastly, as above, the stationarity of $\pi$, the cascade formula~\eqref{cascade3} and the decomposition of any context $c$ into $c=p_c\alpha _cs_c$ where $\alpha _cs_c$ is the $\alpha$-lis of $c$ together imply that $\pi$ is determined by the $\pi\left(\rond L\overline{\alpha s}\right)$ where $s\in\rond S$
(remember, $\rond S$ denotes the set of all $\alpha$-lis \emph{of contexts}).
We have proved that the restriction of $\fff$ to stationary measures is one-to-one.

\noindent
2) \emph{Image of a stationary probability measure}.
Let $\pi\in\rond M_1\left(\rond L\right)$ be stationary.
By disjoint union, as above, if $\alpha s\in\rond S$,
\[
\pi\left(\rond L\overline{\alpha s}\right)
=\sum _{c\in\rond C^f,~c=s\cdots}\pi\left(\rond L\overline{c}\right) q_c\left(\alpha\right) .
\]
Applying the cascade formula~\eqref{cascade3} to all contexts in the sum and noting that $\casc (\alpha c)=q_c(\alpha )\casc (c)$, one gets
\[
\pi\left(\rond L\overline{\alpha s}\right)
=\sum _{c\in\rond C^f,~c=s\cdots}\casc (\alpha c)\pi\left(\rond L\overline{\alpha _cs_c}\right).
\]
Gathering together all the contexts that have the same $\alpha$-lis entails
\[
\pi\left(\rond L\overline{\alpha s}\right)
=\sum _{\beta t\in\rond S}\pi\left(\rond L\overline{\beta t}\right)\left( \sum _{c\in\rond C^f,~c=s\cdots =\cdots [\beta t]}\casc (\alpha c)\right).
\]
This means that the row vector $\left(\pi\left(\overline{\alpha s}\right)\right) _{\alpha s\in\rond S}$ is a left-fixed vector for the matrix $Q$.
We have shown that $\fff$ sends a stationary probability measure to a left-fixed vector for~$Q$.
Moreover, as in the proof of (i), Equality~\eqref{masseTotale} holds.

\noindent
3) \emph{Surjectivity}.

Let $\left(v_{\alpha s}\right) _{\alpha s\in\rond S}\in [0,1]^{\rond S}$ be a row vector, left-fixed by~$Q$, that satisfies $\sum _{\alpha s\in\rond S}v_{\alpha s}\kappa _{\alpha s}=1$.
Let $\mu$ be the function defined on $\rond S$ by $\mu\left( \alpha s\right)=v_{\alpha s}$.
Denoting by $\alpha _cs_c$ the $\alpha$-lis of a context $c$, $\mu$ extends to any finite nonempty word in the following way:
\begin{equation}
\label{prolongementV}
\forall w\in\rond W\setminus\{\emptyset\},~\mu (w)=\casc (w)\sum _{c\in\rond C^f,~c=s_w\cdots}\casc (\alpha_wc)\mu\left(\alpha _cs_c\right)\in [0,+\infty ].
\end{equation}
Notice that this definition actually extends $\mu$ because of the fixed vector property, and that, at this moment of the proof, $\mu (w)$ might be infinite.
Notice also that this implies $\mu(w)=\casc(w)\mu(\alpha_ws_w)$ for any $w\in\rond W$, $w\neq\emptyset$.

For every $n\geq 1$ and for all $w\in\rond W$ such that $|w|=n$, define $\pi_n\left(\overline w\right) =\mu\left( w\right)$ ; this clearly defines a 
$[0,+\infty ]$-valued measure $\pi_n$ on $\rond A_{-n}=\prod_{-n\le k\le -1}\rond A$.
Besides, $\pi _1$ is a probability measure.
Indeed, because of Definition~\eqref{prolongementV} and Remark\eqref{rem:cascRem},
\[
\mu (0)+\mu (1)
=\sum _{c\in\rond C^f}\left(\casc (0c)+\casc (1c)\right)\mu\left(\alpha _cs_c\right)
=\sum _{c\in\rond C^f}\casc (c)\mu\left(\alpha _cs_c\right)
\]
which can be written
\[
\mu (0)+\mu (1)
=\sum _{\alpha s\in\rond S}\mu\left(\alpha s\right)\sum _{c\in\rond C^f,~c=\cdots [\alpha s]}\casc (c)
=\sum _{\alpha s\in\rond S}v_{\alpha s}\kappa _{\alpha s}=1
\]
the last equality coming from the assumption on $(v_{\alpha s})_{\alpha s}$.

In view of applying Kolmogorov extension theorem, the consistency condition states as follows : $\pi_{n+1}(\rond A\overline w)=\pi_n(\overline w)$ for any $w\in\rond W$ of length $n$.
This is true because
\begin{equation}
\label{eq:compatibility}
\mu(w0)+\mu(w1)=\mu(w).
\end{equation}
Indeed, for any $a\in\rond A$, since $s_w$ is internal, $s_wa$ is either internal or a context.
Furthermore,
\begin{itemize}
	\item if $s_wa\in\rond I$ then $s_{wa}=s_wa$, $\alpha_{wa}=\alpha_{w}$ and $\casc (wa)=\casc (w)$ so that 
\begin{equation}
\label{eq:pourCompatibility}
\mu (wa)=\casc(w)\sum_{c\in\rond C^f,~c=s_wa\cdots}\casc\left(\alpha_wc\right)\mu\left(\alpha_c s_c\right) ;
\end{equation}
	\item if $s_wa\in\rond C$ then denote $\kappa =s_wa$ so that $\casc (wa)=\casc (w)\casc\left(\alpha _w\kappa\right)$, $\alpha _{wa}=\alpha _\kappa$ and $s_{wa}=s_\kappa$.
	Thus, $\mu (wa)
	=\casc (wa)\sum _{c=s_\kappa\cdots}\casc\left(\alpha _\kappa c\right)\mu \left(\alpha _cs_c\right)
	=\casc (w)\casc\left(\alpha _w\kappa\right)\mu\left(\alpha _\kappa s_\kappa\right)$, which implies that~\eqref{eq:pourCompatibility} still holds, the sum being reduced to one single term since $s_wa$ is itself a context.
	\end{itemize}
Valid in all cases, Formula~\eqref{eq:pourCompatibility} easily implies Claim~\eqref{eq:compatibility}. Consequently all the $\pi_n$ are probability measures.
By Kolmogorov extension theorem, there exists a unique probability measure $\pi$ on $\rond L$ such that $\pi_{|\rond A_{-n}}=\pi_n$ for every $n$.

Furthermore, $\pi (\overline c)=0$ for any infinite context $c$.
Indeed, one has successively,
\begin{align*}
1&=\sum _{c\in\rond C^f}\pi\left(\rond L\overline c\right)+\sum _{c\in\rond C^i}\pi\left(\overline c\right)\\
&=\sum _{c\in\rond C^f}\mu (c)+\sum _{c\in\rond C^i}\pi\left(\overline c\right).
\end{align*}
Besides,
\[
\sum _{c\in\rond C^f}\mu (c)
=\sum _{c\in\rond C^f}\casc (c)\mu\left(\alpha _cs_c\right)
=\sum _{\alpha s\in\rond S}v_{\alpha s}\kappa _{\alpha s}=1
\]
so that $\sum _{c\in\rond C^i}\pi\left(\overline c\right)=0$.

The stationarity of $\pi$ follows from  the identity $\mu(0w)+\mu(1w)=\mu(w)$ for any finite word $w$. Namely :
\begin{itemize}
\item if $w\notin\rond I$, then for $a\in\rond A$, $s_{aw}=s_w$, $\alpha_{aw}=\alpha_w$ hence
\[
\mu(0w)+\mu(1w)=\left(\casc(0w)+\casc(1w)\right)\sum_{c\in\rond C^f,~ c=s_w\cdots}\casc\left(\alpha_wc\right)\mu\left(\alpha_cs_c\right) .
\]
Now, Remark~\ref{rem:cascRem} entails the claim.
\item if $w\in\rond I$, then for $a\in\rond A$, $s_{aw}=w$, $\alpha_{aw}=a$ and $\casc(aw)=1$ thus
\[
\mu(aw)=\sum_{c\in\rond C^f,~c=w\cdots}\casc (ac)\mu\left(\alpha_cs_c\right) .
\]
Using again Remark~\ref{rem:cascRem}, it comes
\begin{align*}
\mu(0w)+\mu(1w)&=\sum_{c\in\rond C^f,~c=w\cdots}\left(\casc(0c)+\casc(1c)\right)\mu(\alpha_cs_c)\\
&=\sum_{c\in\rond C^f,~c=w\cdots}\casc(c)\mu(\alpha_cs_c)\\
&=\sum_{c\in\rond C^f,~c=w\cdots}\mu(c)\\
&=\sum_{c\in\rond C^f,~c=w\cdots}\pi\left(\rond L\overline c\right)\\
&=\pi\left(\rond L\overline w\right)-\sum_{c\in\rond C^i,~c=w\cdots}\pi\left(\overline c\right) =\mu (w).
\end{align*}
\end{itemize}
Since $f(\pi )=\left( v_{\alpha s}\right) _{\alpha s}$, this concludes the proof.
\end{proof}

\subsection{Two particular cases}

\subsubsection{Finite hat, $1$ or $0$ as a context}
\label{meynTweedie}

\begin{pro}
Let $(\rond T,q)$ be a non-null probabilised context tree and $U$ the associated VLMC.
Assume that $1$ and $0^{a}$ are contexts, for some integer $a\geq 1$
(or symmetrically that $0 \in \rond C$ and $1^{a} \in \rond C$).
Then, $U$ admits a unique invariant probability measure.
\end{pro}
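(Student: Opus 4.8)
The statement is a special case of Theorem~\ref{fQbij}, so the strategy is to verify the two hypotheses needed to apply part (ii) and then to check that the set of admissible left-fixed vectors of $Q$ is a single point. Concretely, I would first show that the cascade series~\eqref{convCasc} converge, then identify the $\alpha$-lis set $\rond S$ and the matrix $Q$ explicitly enough to see that $Q$ has a one-dimensional space of nonnegative left-fixed vectors (or that the normalisation~\eqref{posrec} singles out a unique one), and finally invoke Theorem~\ref{fQbij}(ii) to transfer uniqueness back to stationary probability measures. Existence comes for free once we produce at least one admissible vector.

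\textbf{Convergence of the cascade series.} Assume $1\in\rond C$ and $0^a\in\rond C$ (the symmetric case is identical after exchanging the roles of $0$ and $1$). The key observation is that the word $1$ being a context forces every internal node other than $\emptyset$ to be of the form $0w$: no internal node may begin with $1$. Hence the longest internal suffix $s_c$ of any context $c$ is either $\emptyset$ or begins with $0$, and in fact, since $0^a$ is a context, every sufficiently long word ending in $0^a$ is noninternal; this bounds the internal suffixes and therefore controls, for a given $\alpha$-lis $\alpha s\in\rond S$, the combinatorial shape of the contexts $c$ with $c=\cdots[\alpha s]$. One then estimates $\sum_{c=\cdots[\alpha s]}\casc(c)$ by comparing it to a geometric-type series: walking down from $s_c$ towards the root along the $\beta_i$'s, each step multiplies by some $q_{\rpref\sigma^k(c)}(\beta_k)$, and because $1$ and $0^a$ are both contexts the relevant prefixes are taken in a \emph{finite} part of the tree (a ``finite hat''), so the transition factors are bounded away from $1$ and the sum converges. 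This is the step I expect to be the main obstacle: making the ``finite hat'' argument precise, i.e.\ showing that having $1$ and $0^a$ as contexts confines all the $\rpref\sigma^k(c)$ appearing in the cascades to a finite set of contexts with transition probabilities uniformly bounded away from $0$ and $1$, and then summing the resulting geometric series over the tree of possible prefixes below $s_c$.

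\textbf{Uniqueness via $Q$.} With the cascade series convergent, Theorem~\ref{fQbij}(ii) gives a bijection between $U$-stationary probability measures and nonnegative left-fixed vectors of $Q$ normalised by~\eqref{posrec}. It remains to show there is exactly one such vector. Here I would argue that, under the present hypotheses, the matrix $Q$ restricted to $\rond S$ is stochastic and irreducible with a recurrence property, so that (by the classical theory of nonnegative matrices / Markov chains, exactly as invoked for stable trees in Theorem~\ref{th:stable}) it admits a unique invariant probability-type vector up to scaling; the normalisation~\eqref{posrec} then fixes the scale. The point is that the hypothesis ``$1$ and $0^a$ are contexts'' is precisely what makes the underlying combinatorial structure behave like a finite, irreducible, recurrent object: the ``hat'' above the two contexts $1$ and $0^a$ is finite, $\rond S$ is finite, and the associated matrix falls in the scope of Theorem~\ref{cor:finite} (finitely many context $\alpha$-lis, cascade series convergent $\Rightarrow$ unique stationary probability measure). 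So in the write-up I would either reduce directly to Theorem~\ref{cor:finite} after checking $\rond S$ is finite and the cascades converge, or reprove the short finite-dimensional linear-algebra fact that a finite stochastic irreducible matrix has a one-dimensional space of left eigenvectors for the eigenvalue $1$. Existence is then automatic, and the ``at most one'' becomes ``exactly one''.
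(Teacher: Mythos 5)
Your strategy --- prove convergence of the cascade series, then control the left-fixed vectors of $Q$ --- is not the route the paper takes, and as written it rests on structural claims that are false under the stated hypotheses. The hypothesis only pins down the top of the tree: the subtrees rooted at the internal nodes $0^k1$, $1\le k\le a-1$, are completely arbitrary, so the ``hat'' is not finite, $\rond S$ need not be finite, and the contexts $\rpref\sigma^k(c)$ appearing in the cascades are not confined to a finite set. For a concrete counterexample to the finiteness of $\rond S$ with $a=2$, take the tree whose finite contexts are $1$, $00$, $0100$, the $011^n00$ and $011^n01$ for $n\ge 1$, and the $01011^n0$ for $n\ge 0$ (with infinite branches $011^\infty$ and $01011^\infty$): the word $011^n0$ is internal and is the lis of the context $01011^n0$, so the $\alpha$-lis $1011^n0$, $n\ge 0$, are pairwise distinct and $\rond S$ is infinite. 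Moreover such trees are in general not stable, so Proposition~\ref{pro:stochasticity} does not apply and $Q$ is neither row-stochastic nor irreducible: in the example of Section~\ref{subsubsec:SfiniCIinfini}, which is itself a special case of the present statement, the first row of $Q$ is $(q_1(1),0,0,0)$, which does not sum to $1$ and allows no transition out of the state $1$. Hence neither Theorem~\ref{cor:finite} nor the classical theory of irreducible recurrent stochastic matrices can be invoked, and uniqueness of the admissible left-fixed vector would have to be proved by hand for an infinite, non-stochastic, reducible matrix. Finally, your convergence argument for the cascade series relies on transition factors ``uniformly bounded away from $1$'', but non-nullness gives no such uniform bound over an infinite set of contexts; this step is not established.

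The paper's proof avoids Theorem~\ref{fQbij} entirely. It works directly with the chain $(U_n)$ on $\rond L$: the cylinder $\rond L1$ is an atom, non-nullness gives $\lambda$-irreducibility, and the expected return time to $\rond L1$ is finite because once $a$ zeros have been appended the current context is $0^a$, so the tail of the return time is dominated by a geometric distribution of ratio $q_{0^a}(0)<1$. Kac's theorem then yields positive recurrence, hence existence and uniqueness of the invariant probability measure. If you want to salvage your route, you would at a minimum need separate arguments for the convergence of the cascade series and for the uniqueness of the normalised left-fixed vector of a non-stochastic $Q$; the return-time argument is both shorter and uses the hypothesis exactly where it matters.
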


\begin{minipage}{0.5\textwidth}
The proof is based on Kac's theorem (see Theorem~10.2.2 together with Theorem~10.0.1 in \cite{meyn/tweedie/09}).
Indeed, for any context $c$, the cylinder $\rond L\overline{c}$ is an \emph{atom} (see \cite[Chap 5, p. 96]{meyn/tweedie/09}).
Besides, the non triviality of the $q_c$ yields the  $\lambda$-irreductibility of $(U_n)_{n\geq 0}$ (see \cite[Chap 5]{meyn/tweedie/09}), where $\lambda$ denotes the Lebesgue measure on $[0,1]$.
Due to  Kac's theorem, $(U_n)_n$ is positive recurrent if and only if $\Espe\left[\tau_1|U_0\in \rond L1\right]<\infty$, where
\[\tau_1=\inf\{n\geq 1, \quad U_n \in \rond L1\}\]
is the first return time in $\rond L1$.
Since
\end{minipage}
\begin{minipage}{0.5\textwidth}
\centering
\definecolor{qqqqff}{rgb}{0.,0.,1.}
\begin{tikzpicture}[line cap=round,line join=round,>=triangle 45,x=1.0cm,y=1.0cm]
\fill[color=qqqqff,fill=qqqqff,fill opacity=0.1] (5.08,2.84) -- (5.24,4.58) -- (6.62,2.78) --(6.62,2.78) ..controls +(-1,0.5) and +(2,-1).. (5.08,2.84);
\fill[color=qqqqff,fill=qqqqff,fill opacity=0.1] (2.94,0.24) -- (3.74,2.12) -- (4.82,0.24) --(4.82,0.24) ..controls +(-1,-0.5) and +(2,1).. (2.94,0.24);
\fill[color=qqqqff,fill=qqqqff,fill opacity=0.1] (3.92,2.4) -- (4.22,2.88) -- (4.76,2.38) -- (4.76,2.38)..controls +(0.5,-1) and +(0.5,0.2).. (3.92,2.4);
\draw (1.84,1.34)-- (4.8,5.88);
\draw (5.68,5.28)-- (4.8,5.88);
\draw (5.88,5.48) node[anchor=north west] {$q_1$};
\draw (1.6,1.32) node[anchor=north west] {$q_{0^{a}}$};
\draw (5.24,4.58)-- (4.351684937289774,5.192381626789046);
\draw (4.22,2.88)-- (3.2637086868301717,3.5236612966922234);
\draw (3.74,2.12)-- (2.7883463837784106,2.7945583048493186);
\draw [color=qqqqff] (5.24,4.58)-- (5.08,2.84);
\draw [color=qqqqff] (6.62,2.78)-- (5.24,4.58);
\draw [color=qqqqff] (3.74,2.12)-- (2.94,0.24);
\draw [color=qqqqff] (4.82,0.24)-- (3.74,2.12);
\draw [color=qqqqff] (4.22,2.88)-- (3.92,2.4);
\draw [color=qqqqff] (4.76,2.38)-- (4.22,2.88);
\draw (4,4) node[rotate=58,anchor=north west] {...};
\begin{scriptsize}
\draw [fill=qqqqff] (1.84,1.34) circle (1.5pt);
\draw [fill=qqqqff] (4.8,5.88) circle (1.5pt);
\draw [fill=qqqqff] (5.68,5.28) circle (1.5pt);
\draw [fill=qqqqff] (5.24,4.58) circle (1.5pt);
\draw [fill=qqqqff] (4.22,2.88) circle (1.5pt);
\draw [fill=qqqqff] (3.74,2.12) circle (1.5pt);
\end{scriptsize}
\end{tikzpicture}
\end{minipage}
\begin{align*}
\Espe\left[\tau_1|U_0\in \rond L1\right]&=\sum_{n=1}^{\infty}n\Proba\left(U_n \in \rond L10^{n-1}1|U_0 \in \rond L1\right)\\
&\leq \sum_{n=1}^{a}n\Proba\left(U_n \in \rond L10^{n-1}1|U_0 \in \rond L1\right)+\sum_{n=a+1}^{\infty}nq_{0^{a}}(0)^{n-a-1}q_{0^{a}}(1),
\end{align*}
and since $q_{0^{a}}(0)\neq 1$, we have $\Espe\left[\tau_1|U_0\in \rond L1\right]<\infty$. Thus, the non triviality of the $q_c$ implies that $U$ is positive recurrent and hence $U$ admits a unique invariant probability measure.


\subsubsection{Finite number of infinite branches and uniformly bounded $q_c$}
\label{paccautGallo}

\begin{pro}
Let $(\rond T,q)$ be a probabilised context tree and $U$ the associated VLMC.
Assume that

(i) $\exists\varepsilon>0$, $\forall c\in\rond C$, $\forall\alpha\in\rond A$, $\varepsilon<q_c(\alpha)<1-\varepsilon$ (strong non-nullness);

(ii) $\rond C^i$ is a finite set.

Then, $U$ admits at least one invariant probability measure.
\end{pro}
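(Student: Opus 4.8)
The strong non-nullness hypothesis (i) gives us a uniform spectral-gap-type control on the cascades, and finiteness of $\rond C^i$ (ii) is what prevents the mass from escaping to infinite contexts. The plan is to build an invariant measure as a weak limit of Cesàro averages of the iterates of the VLMC transition operator, and to use (i)--(ii) to rule out loss of mass in the limit, i.e. to establish tightness. Concretely, start the chain from any fixed initial distribution $\pi_0$ (say the Dirac mass at some left-infinite word, or any convenient $\nu$) and set $\pi_N = \frac1N\sum_{n=0}^{N-1}\pi_0 P^n$, where $P$ is the transition kernel defined by \eqref{eq:def:VLMC}. Since $\rond L = \rond A^{-\g N}$ is a compact metrizable space, the sequence $(\pi_N)_N$ has a weakly convergent subsequence $\pi_{N_k}\to\pi$, and the standard Krylov--Bogolyubov argument shows $\pi$ is $P$-invariant provided $P$ is Feller. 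So the first step is to check the Feller property of $P$ on $\rond L$: this is essentially continuity of $w\mapsto \rpref(\overline{w})$-dependent transitions, which holds because the cylinders $\rond L\overline{w}$ are clopen and the transition out of $\rond L\overline w$ depends only on finitely much of $w$ when $\rpref(\overline w)$ is a finite context — and on the (finitely many) infinite branches the transition probabilities $q_c$ are prescribed; one checks that $P$ maps continuous functions to continuous functions using that $\rond C^i$ is finite.

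The real content is that the limit $\pi$ is a genuine probability measure on $\rond L$ and not a sub-probability with defect concentrated on the tails. Because $\rond L$ is compact this is automatic for the weak limit — $\pi(\rond L)=1$ for free. The subtlety instead is that $\pi$, while $P$-invariant as a measure on $\rond L$, must be shown to be stationary in the sense used in this paper, i.e. to satisfy the cascade relations; equivalently we must make sure $\pi$ is not supported in some degenerate way. Here strong non-nullness enters: it guarantees every cylinder $\rond L\overline w$ gets positive $\pi_N$-mass uniformly (an estimate like $\pi_0 P^n(\rond L\overline w)\ge \varepsilon^{|w|}\cdot(\text{something bounded below})$ once $n\ge |w|$), so that $\pi(\rond L\overline w)>0$ for all finite $w$, matching Lemma~\ref{lem:pineq0}(i). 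This positivity, together with invariance, is exactly what is needed to run the argument of Lemma~\ref{lem:cascade} and conclude $\pi$ is a bona fide stationary probability measure for $U$.

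The main obstacle I expect is the Feller/continuity verification at the infinite branches. For a finite context tree the kernel is trivially Feller, but with infinite branches present, the map $w\mapsto q_{\rpref(\overline w)}$ is continuous at a left-infinite word $\ell$ whose mirror is an infinite branch only if the prescribed Bernoulli parameter $q_c$ on that branch is compatible with the limit of the $q_{c_n}$ along the finite contexts $c_n$ approaching it. This need not hold in general — but hypothesis (i) pins all these parameters into the compact interval $[\varepsilon,1-\varepsilon]$, and since $\rond C^i$ is finite, only finitely many potential discontinuities arise, each at an isolated infinite branch, and one shows they do not obstruct the Krylov--Bogolyubov limit: the set of such branches carries $\pi$-mass zero by an argument analogous to Lemma~\ref{lem:pineq0}(ii) (an aperiodic infinite branch and all its shifts are distinct, forcing vanishing mass; ultimately periodic ones are killed by $0<\rho<1$ from strong non-nullness). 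So the clean way to organize the proof is: (1) define the Cesàro averages; (2) extract a weak limit $\pi$; (3) show $\pi$ is $P$-invariant, either by Feller-ness off the finite branch set or by a direct computation on cylinders using that the defect is supported on the finitely many infinite branches which have zero mass; (4) invoke Lemma~\ref{lem:pineq0} and Lemma~\ref{lem:cascade} to confirm $\pi$ is a stationary probability measure in the paper's sense. Uniqueness is not claimed and is not addressed.
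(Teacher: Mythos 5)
Your route is genuinely different from the paper's: the paper does not give a self-contained argument but invokes the theorem of \cite{gallo/paccaut/13} on $g$-measures, where the invariant measure is obtained as a fixed point of the dual of the transfer operator, first for continuous $g$ and then extended to $g$-functions with finitely many discontinuities --- the finitely many infinite branches playing exactly the role of the discontinuity set, just as in your sketch. Your Krylov--Bogolyubov argument on the compact space $\rond L$ is the natural self-contained version of this, and its skeleton is sound: extract a weak limit $\pi$ of the Ces\`aro averages, observe that for continuous $f$ the function $Pf$ is continuous off the finite set $D=\{\ell\in\rond L:\ \overline\ell\in\rond C^i\}$ (off $D$ the map $\ell\mapsto q_{\rpref(\overline\ell)}$ is locally constant, so $P$ is \emph{not} Feller in general but its discontinuity set is contained in $D$), and conclude $\pi P=\pi$ from the extended portmanteau theorem once $\pi(D)=0$ is known. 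What your approach buys is a proof readable inside the paper with elementary tools; what the citation buys the authors is in addition a uniqueness statement under extra regularity hypotheses, which your argument does not (and need not) address.

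The one step you must repair is the justification of $\pi(D)=0$. As written you appeal to ``an argument analogous to Lemma~\ref{lem:pineq0}(ii)'', but the proof of that lemma (in both the periodic and the aperiodic case) rests on the cascade formula~\eqref{cascade2}, hence on the stationarity of $\pi$ --- which is precisely what you are in the middle of establishing; in that order the reasoning is circular. The repair is cheap and uses only hypothesis (i): for every finite word $w$ and every $n\geq |w|$ one has $\proba\left(U_n\in\rond L\overline w\right)\leq (1-\varepsilon)^{|w|}$, because each of the last $|w|$ letters of $U_n$ is produced with conditional probability at most $1-\varepsilon$ whatever the past. Averaging over $n$ and passing to the limit along the subsequence (the cylinders $\rond L\overline w$ are clopen, so their $\pi$-measure is the limit of their $\pi_{N_k}$-measures) gives $\pi\left(\rond L\overline w\right)\leq(1-\varepsilon)^{|w|}$; letting $w$ run over the prefixes of a fixed infinite branch shows that every point of $D$ is $\pi$-null, with no stationarity needed --- in fact every subsequential limit is atomless. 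With that substitution your steps (1)--(3) are complete, and step (4) is superfluous: $P$-invariance of the probability measure $\pi$ is exactly what ``stationary'' means in the paper, and Lemmas~\ref{lem:cascade} and~\ref{lem:pineq0} then become available as consequences rather than ingredients.
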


This result is a consequence of the theorem proved in \cite{gallo/paccaut/13}, stated in the framework of $g$-measures, which contains the VLMC processes. Assuming regularity conditions on the $g$-function (which plays the role of the $q_c$), a uniqueness result is also obtained.

The proof relies on a careful study of the so-called transfer operator associated to the $g$-function. The infinite contexts in the VLMC framework play the role of discontinuities of the $g$-function. When the $g$-function is continuous, it is straightforward to find a fixed point for the dual of the transfer operator. This fixed point is an invariant probability measure for the process. The result extends to the case when the $g$-function has a finite number of discontinuities.

\section{Shift-stable context trees}

\label{sec:stable}

This section deals with a subclass of context trees defined by a hypothesis put on their shape (see definition and characterizations of stable trees in Section~\ref{subsection:defstable}). For this subclass, using the notion of descent tree (see Section~\ref{subsection:descent}), the matrix $Q$ is proved to be irreducible and stochastic. In Section~\ref{subsection:existence_stable}, using Theorem~\ref{fQbij}, a necessary and sufficient condition is given for a stable tree VLMC to admit a (unique) stationary probability measure (Theorem~\ref{th:stable}). In particular, when $\rond S$ is finite, this NSC reduces to the convergence of cascade series (Theorem~\ref{cor:finite}), a rather easy to handle condition. Finally, Section~\ref{subsection:semimarkov} is dedicated to the link that can be made with the semi-Markov chains theory.

\subsection{Definitions, examples}
\label{subsection:defstable}

\begin{pro}\label{prop:defstable}
Let $\rond T$ be a context tree. The following conditions are equivalent.
\begin{enumerate}
	\item[(i)] The unlabelled tree $\rond T$ is invariant by the shift $\sigma$ : $\forall \alpha \in \rond A$, $\forall w \in \rond W$, $\alpha w \in \rond T \Longrightarrow w \in \rond T$. In an equivalent manner, $\sigma(\rond T)\subseteq\rond T$.
		\item[(ii)] If $c$ is a finite context and $\alpha \in \rond A$, then $\alpha c$ is noninternal.
		\item[(iii)] $\forall \alpha\in\rond A$, $\rond T\subset \alpha\rond T$, where $\alpha\rond T=\{\alpha w, w\in\rond T\}$.
		\item[(iv)] For any VLMC $(U_n)_n$ associated with $\rond T$, the process $\left(\lpref\left(\overline{U_n}\right)\right)_{n\in\g N}$ defines a Markov chain with state space $\rond C$.
\end{enumerate} 
\end{pro}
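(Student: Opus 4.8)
The plan is to prove the four conditions equivalent with $(i)$ as the hub, establishing $(i)\Leftrightarrow(ii)$, $(i)\Leftrightarrow(iii)$ and $(i)\Leftrightarrow(iv)$; the first two are short combinatorics on the (saturated) tree, the third carries the content. Condition $(iii)$ is a mere rewriting of $(i)$: $\sigma(\rond T)\subseteq\rond T$ says that every node $\alpha w\in\rond T$ satisfies $w\in\rond T$, which is exactly what the inclusions $\rond T\subseteq\alpha\rond T$ ($\alpha\in\rond A$) express, so I would dispatch this in a line. For $(i)\Rightarrow(ii)$: if $c$ is a finite context and $\alpha c$ were strictly internal, it would have a child $\alpha c\beta\in\rond T$, whence $\sigma(\alpha c\beta)=c\beta\in\rond T$, so $c$ would have a child — impossible, since $c$ is a leaf. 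For $(ii)\Rightarrow(i)$, by contraposition: if $\alpha w\in\rond T$ while $w\notin\rond T$, let $v$ be the longest prefix of $w$ lying in $\rond T$ and write $w=v\beta w'$; then $v\in\rond T$ but $v\beta\notin\rond T$, so saturation forces $v$ to be a leaf, i.e.\@ a finite context, while $\alpha v\beta\in\rond T$ by prefix-closedness, so $\alpha v$ is strictly internal, contradicting $(ii)$.

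For $(i)\Rightarrow(iv)$, fix a VLMC $(U_n)$ for $\rond T$, set $C_n=\rpref(\overline{U_n})\in\rond C$, and let $X_{n+1}$ be the letter appended at step $n$, so $\overline{U_{n+1}}=X_{n+1}\overline{U_n}$. Since $C_n$ is a prefix of $\overline{U_n}$, one has $\overline{U_{n+1}}=X_{n+1}C_nz$ for some continuation $z$ (and $\overline{U_{n+1}}=X_{n+1}C_n$ when $C_n$ is an infinite context). By $(ii)$ the word $X_{n+1}C_n$ is noninternal, so $\rpref(X_{n+1}C_nz)$ is independent of $z$ and equals a deterministic $g(C_n,X_{n+1})\in\rond C$; hence $C_{n+1}=g(C_n,X_{n+1})$. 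As the conditional law of $X_{n+1}$ given $(U_0,\dots,U_n)$ is $q_{C_n}$ and $C_n$ is $\sigma(C_0,\dots,C_n)$-measurable, the tower property yields $\proba(C_{n+1}=d\mid C_0,\dots,C_n)=\sum_{\alpha:\ g(C_n,\alpha)=d}q_{C_n}(\alpha)$, a function of $C_n$ alone, so $(C_n)$ is a (time-homogeneous) Markov chain on $\rond C$.

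For $(iv)\Rightarrow(i)$ I argue contrapositively. If $\rond T$ is not stable then, by $(i)\Leftrightarrow(ii)$, there are a finite context $c=c_1\cdots c_\ell$ and a letter $\alpha$ with $\alpha c$ strictly internal; by saturation $\alpha c0,\alpha c1\in\rond T$, and there are contexts $e_0\neq e_1$ having $\alpha c0$, $\alpha c1$ as respective prefixes. Choose $\zeta_0,\zeta_1\in\rond R$ beginning with $0$ and $1$ with $\rpref(\alpha c\zeta_i)=e_i$, let $\ell_0,\ell_1\in\rond L$ be the deterministic words with $\overline{\ell_i}=\zeta_i$, take any non-null $q$, and run the VLMC from $\tfrac12(\delta_{\ell_0}+\delta_{\ell_1})$. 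The key observation is that occupying the leaf $c$ at time $\ell$ pins down the last $\ell$ steps: on $\{U_0=\ell_i\}$ one has $\{C_\ell=c\}=\{X_j=c_{\ell-j+1}\ \text{for}\ 1\le j\le\ell\}=:E_i$, an event of positive probability by non-nullness, on which $\overline{U_\ell}=c\zeta_i$ deterministically and the path $(C_0,\dots,C_\ell)$ is a deterministic sequence $\vec p_i$ ending at $c$. Because $\zeta_0$ and $\zeta_1$ start with different letters, $C_0=\rpref(\zeta_0)\neq\rpref(\zeta_1)$, so $\vec p_0\neq\vec p_1$, and moreover $\{(C_0,\dots,C_\ell)=\vec p_i\}=E_i$. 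Finally, on $E_i$ the context $C_{\ell+1}=\rpref(X_{\ell+1}\,c\zeta_i)$ equals $e_i$ exactly when $X_{\ell+1}=\alpha$, so $\proba(C_{\ell+1}=e_0\mid(C_0,\dots,C_\ell)=\vec p_0)=q_c(\alpha)>0$ whereas $\proba(C_{\ell+1}=e_0\mid(C_0,\dots,C_\ell)=\vec p_1)=0$; the Markov property would force both to equal $\proba(C_{\ell+1}=e_0\mid C_\ell=c)$, a contradiction. I expect $(i)\Leftrightarrow(iv)$ to be the main point: the one delicate step is the observation just used — that reaching a finite context $c$ at time $|c|$ forces the intervening letters, so the two \emph{regimes} producing different next-contexts genuinely sit over two distinct positive-probability context-paths — whereas $(i)\Rightarrow(iv)$, $(i)\Leftrightarrow(ii)$ and $(i)\Leftrightarrow(iii)$ are routine, all resting on the standing saturation hypothesis and on $\rpref$ being defined on every noninternal (in particular every right-infinite) word.
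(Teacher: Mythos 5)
Your proof is correct and follows essentially the same route as the paper: the equivalences $(i)\Leftrightarrow(ii)\Leftrightarrow(iii)$ via saturation and prefix-freeness of contexts, and $(ii)\Rightarrow(iv)$ via the identity $\rpref(\alpha\overline{s})=\rpref\bigl(\alpha\,\rpref(\overline{s})\bigr)$. The only difference is that your $(iv)\Rightarrow(i)$ turns the paper's informal remark (``going further in the past of $s$ is needed'') into a fully quantified probabilistic counterexample, with an explicit initial distribution and two positive-probability context-paths ending at $c$ with distinct transition laws --- a welcome tightening, not a different method.
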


\begin{proof}\ 

$(i)\implies (ii)$. Take $c\in\rond C$ and $\alpha\in\rond A$. If $\alpha c\in\rond I$ then $\alpha c0\in\rond T$. The item $(i)$ implies $c0\in\rond T$, which contradicts $c\in\rond C$.

$(ii)\implies (i)$. Take $\alpha\in\rond A$ and $w$ such that $\alpha w\in\rond T$. If  $w\notin\rond T$ then there exists a finite context $c$ such that $w=cw'$ with $w'\neq\emptyset$. It comes $\alpha cw'\in\rond T$, which implies $\alpha c\in\rond I$ and this contradicts $(ii)$.

$(i)\iff(iii)$ is easy.

$(ii)\implies (iv)$ What needs to be proved is that $\lpref\left(\overline{U_{n+1}}\right)$ only depends on $U_n$ through $\lpref\left(\overline{U_n}\right)$. In other words, we shall prove that for all $s\in\rond L, \alpha \in \rond A$, $\lpref\left(\alpha\overline{s}\right)$ only depends on $s$ through $\lpref(\overline{s})$. This is clear because
$$
\lpref(\alpha\overline{s}) = \lpref\left(\alpha\lpref(\overline{s}) \right).
$$
Indeed, $\lpref(\overline{s})\in\rond C$ and (ii) implies $\alpha\lpref(\overline{s})\notin\rond I$. Therefore $\alpha\lpref(\overline{s})$ writes $cw$ with $c\in\rond C$ and $w\in\rond W$. On one hand, this entails $\lpref\left(\alpha\lpref(\overline{s}) \right)=c$. On the other hand, this means that $cw$ is a prefix of $\alpha\bar s$ thus $\lpref(\alpha\overline{s})=c$.

$(iv)\implies (ii)$
We shall prove the contrapositive. Assume there exists $c\in\rond C^f$ and $\alpha\in\rond A$ such that $\alpha c\in\rond I$. Let $s\in\rond L$ such that $\lpref(\overline{s})=c$. As $\alpha c\in\rond I$, $\lpref(\alpha\overline{s})$ is a context which has $\alpha c$ as a strict prefix. Therefore, $\lpref(\alpha\overline{s})$ does not only depend on $\lpref(\overline{s})$, but going further in the past of $s$ is needed.
\end{proof}

\begin{defi}[shift stable tree]
A context tree is \emph{shift stable}, shortened in the sequel as \emph{stable} when one of the four equivalent  conditions of Proposition~\ref{prop:defstable} is satisfied.
\end{defi}

\begin{rem}
If $\rond T$ is stable then $\sigma(\rond I)\subseteq\rond I$. Namely, if $v\in\rond I$, $v\neq\emptyset$, then  $v\alpha\in\rond T$ for any  $\alpha\in\rond A$. As $\rond T$ is stable, $\sigma(v\alpha)=\sigma(v)\alpha\in\rond T$, which implies $\sigma(v)\in\rond I$.
\end{rem}

\begin{defi}[stabilizable tree, stabilized of a tree]
A context tree is \emph{stabilizable} whenever the stable tree $\displaystyle\bigcup_{n\in\g N}\sigma^n\left(\rond T\right)$ has at most countably many infinite branches (\emph{i.e.} when the latter is again a context tree).
When this occurs, $\displaystyle\bigcup_{n\in\g N}\sigma^n\left(\rond T\right)$ is called the \emph{stabilized} of $\rond T$;
it is the smallest stable context tree containing~$\rond T$.
\end{defi}

For example, the left-comb
\begin{minipage}{35pt}
\begin{tikzpicture}[scale=0.3]
\tikzset{every leaf node/.style={draw,circle,fill},every internal node/.style={draw,circle,scale=0.01}}
\Tree [.{}
	[.{}
		[.{}
			[.{}
				[.{}  \edge[line width=2pt,dashed];\node[fill=white,draw=white]{};\edge[draw=white];\node[fill=white,draw=white]{}; {} ]
		{} ]
			{} ]
				{} ]
					{} ]
\end{tikzpicture}
\end{minipage}
is stable.
On the contrary, the bamboo blossom
\begin{minipage}{45pt}
\begin{tikzpicture}[scale=0.3]
\tikzset{every leaf node/.style={draw,circle,fill},every internal node/.style={draw,circle,scale=0.01}}
\Tree [.{}
	[.{} {} 
		[.{} [.{} {} [.{} [.{} {} [.{} \edge[line width=2pt,dashed];\node[fill=white,draw=white]{};\edge[draw=white];\node[fill=white,draw=white]{}; {} ] ] {} ] ] {} ] ]
	{}
      ]
\end{tikzpicture}
\end{minipage}
is non-stable; it is stabilizable, its stabilized being the double bamboo
\begin{minipage}{80pt}
\begin{tikzpicture}[scale=0.3]
\tikzset{every leaf node/.style={draw,circle,fill},every internal node/.style={draw,circle,scale=0.01}}
\Tree [.{}
	[.{} {} 
		[.{} [.{} {} [.{} [.{} {} [.{} \edge[line width=2pt,dashed];\node[fill=white,draw=white]{};\edge[draw=white];\node[fill=white,draw=white]{}; {} ] ] {} ] ] {} ] ] 
    \edge[draw=white];\node[fill=white,draw=white,scale=4]{}; 
		[.{} [.{} {} [.{} [.{} {} [.{} [.{} {} \edge[draw=white];\node[fill=white,draw=white]{};\edge[line width=2pt,dashed];\node[fill=white,draw=white]{};] {} ] ] {} ] ] {} ] ]
\end{tikzpicture}
\end{minipage}.

\vskip 10pt
\begin{rem}
A context tree is not necessarily stabilizable as the following examples show.

\vskip 10pt
\begin{minipage}{0.47\linewidth}
\begin{center}
\begin{tikzpicture}[scale=0.3]
\tikzset{every leaf node/.style={draw,circle,fill},every internal node/.style={draw,circle,scale=0.01}}
\Tree [.{} 
		[.{} {}
		[.{} 
			[.{}
				[.{} {}
					[.{} {}
						[.{}
							[.{}
								[.{}
									[.{} {} 
										[.{} {} 
											[.{} {}
												[.{} 
													[.{} \edge[line width=2pt,dashed];\node[fill=white,draw=white]{};\edge[draw=white];\node[fill=white,draw=white]{}; ] {} 
												]
											]
										]
									]
									{} ]
								{} ]
							{} ]
					]
				]
				{} ]
			{} ]
		]
	{} ]
      
\end{tikzpicture}\\
\end{center}
This context tree consists in saturating the infinite word $010011\dots 0^k1^k\cdots$ by adding hairs.
This filament tree is stabilizable, its stabilized being the context tree having the
$\{0^l1^k0^{k+1}1^{k+1}\cdots\}$ and the $\{1^l0^k1^{k+1}0^{k+1}\cdots\}$, $k\geq 1, 0\leq l\leq k-1$ as internal nodes.
Its countably many infinite branches are the $0^k1^{\infty}$ and the $1^k0^{\infty}$, $k\geq 0$.
\end{minipage}
\hfill
\begin{minipage}{0.47\linewidth}
\begin{center}
\begin{tikzpicture}[scale=0.3]
\tikzset{every leaf node/.style={draw,circle,fill},every internal node/.style={draw,circle,scale=0.01}}
\Tree [.{} 
		[.{} {}
		[.{} 
			[.{}
				[.{}
					[.{} {}
						[.{} {}
							[.{}
								[.{} {}
									[.{} {} 
										[.{} 
											[.{}
												[.{} 
													[.{} \edge[line width=2pt,dashed];\node[fill=white,draw=white]{};\edge[draw=white];\node[fill=white,draw=white]{}; ] {} 
												]
											{} ]
										{} ]
									]
									 ]
								{} ]
							 ]
					] {}
				]
				{} ]
			{} ]
		]
	{} ]
      
\end{tikzpicture}\\
\end{center}
This context tree, denoted by $\rond T$ for a while, consists in saturating the infinite word $0100011011000001\cdots$ made of the concatenation of all finite words taken in length-alphabetical order.
It is not stabilizable.
Indeed, any finite word belongs to the smallest stable tree that contains $\rond T$, the latter having thus has uncountably many infinite branches.
\end{minipage}
\end{rem}

\begin{pro}
	\label{pro:stabilized}
	Let $(\rond T,q)$ be a stabilizable probabilised context tree and $\widehat{\rond T}$ its stabilized.
	For every context $c$ of $\widehat{\rond T}$, define $\widehat{q}_c=q_{\lpref(c)}$ where the function $\lpref$ is relative to $\rond T$. Then $(\rond T,q)$ and $(\widehat{\rond T},\widehat{q})$ define the same VLMC.
\end{pro}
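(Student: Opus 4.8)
The plan is to show that the two Markov chains $(U_n)$ defined by $(\rond T,q)$ and by $(\widehat{\rond T},\widehat q)$ have the same transition probabilities, since they trivially live on the same state space $\rond L$. By the defining relation of a VLMC, it suffices to prove that for every $\ell\in\rond L$ and every $\alpha\in\rond A$,
\[
q_{\rpref_{\rond T}(\overline\ell)}(\alpha)=\widehat q_{\rpref_{\widehat{\rond T}}(\overline\ell)}(\alpha),
\]
where I write $\rpref_{\rond T}$ and $\rpref_{\widehat{\rond T}}$ for the $\rpref$ maps relative to the two trees. By the very definition $\widehat q_c=q_{\rpref_{\rond T}(c)}$, this reduces to the purely combinatorial claim that for every right-infinite word $r$ (here $r=\overline\ell$),
\[
\rpref_{\rond T}\bigl(\rpref_{\widehat{\rond T}}(r)\bigr)=\rpref_{\rond T}(r),
\]
i.e.\ the unique context of $\rond T$ that is a prefix of $r$ coincides with the unique context of $\rond T$ that is a prefix of the context $\widehat c:=\rpref_{\widehat{\rond T}}(r)$ of $\widehat{\rond T}$.

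First I would record that $\widehat{\rond T}$ is a well-defined context tree: this is exactly the stabilizability hypothesis, and $\widehat{\rond T}=\bigcup_{n}\sigma^n(\rond T)$ is stable, so $\rpref_{\widehat{\rond T}}(r)$ makes sense for every $r\in\rond R$ (a context always exists along any right-infinite word, finite or infinite, as noted after Definition~1). Since $\rond T\subseteq\widehat{\rond T}$, every context $c$ of $\rond T$ is a node of $\widehat{\rond T}$, hence either a context of $\widehat{\rond T}$ or an internal node of $\widehat{\rond T}$ — it cannot be external in $\widehat{\rond T}$. The key point is therefore: the context $\widehat c=\rpref_{\widehat{\rond T}}(r)$ and the context $c=\rpref_{\rond T}(r)$ are comparable as prefixes of $r$, and I must show $c$ is a prefix of $\widehat c$. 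If instead $\widehat c$ were a strict prefix of $c$, then $\widehat c$ would be a strict internal node of $\rond T$ (being a strict prefix of a context of $\rond T$ along the same branch $r$); but I claim that no strict internal node of $\rond T$ is a context of $\widehat{\rond T}$. Indeed if $v$ is a strict internal node of $\rond T$, then $v0\in\rond T$ and $v1\in\rond T$, hence $v0,v1\in\widehat{\rond T}$, so $v$ is a strict internal node of $\widehat{\rond T}$, not a context; contradiction. Therefore $c$ is a (not necessarily strict) prefix of $\widehat c$, and since $\widehat c$ is a prefix of $r$ and $c$ is the unique context-of-$\rond T$-prefix of $r$, $c$ is also the unique context-of-$\rond T$-prefix of $\widehat c$, i.e.\ $\rpref_{\rond T}(\widehat c)=c$. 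This gives $\widehat q_{\widehat c}=q_{\rpref_{\rond T}(\widehat c)}=q_c$, which is precisely the identity of transition probabilities.

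The main obstacle — really the only subtle point — is the case analysis on how $\widehat c$ and $c$ sit relative to one another along $r$, together with the claim that strict internal nodes of $\rond T$ remain strict internal in $\widehat{\rond T}$; this is where stability of $\widehat{\rond T}$ is used implicitly through $\rond T\subseteq\widehat{\rond T}$ and the fact that a node is internal iff both its children are present. One should also be slightly careful when $\widehat c$ is an \emph{infinite} branch of $\widehat{\rond T}$: then $\widehat c=r$, and $\rpref_{\rond T}(\widehat c)=\rpref_{\rond T}(r)=c$ holds for the same reason. Once these observations are in place, the equality of VLMC laws follows, for instance, from the fact that two $\rond L$-valued Markov chains on the same state space with identical one-step transition kernels have the same family of finite-dimensional distributions for each fixed initial law, hence are the same Markov chain.
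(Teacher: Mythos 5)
Your proof is correct and follows the same route as the paper, whose entire proof is the one-line assertion that both chains have the same transition probabilities on $\rond L$. You simply supply the combinatorial verification of that assertion, namely that $\rpref_{\rond T}\bigl(\rpref_{\widehat{\rond T}}(r)\bigr)=\rpref_{\rond T}(r)$ for every $r\in\rond R$, which the paper leaves implicit.
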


\begin{minipage}{0.4\textwidth}
	\begin{proof}
		Both VLMC, as Markov processes on $\rond{L}$, have the same transition probabilities.
	\end{proof}
The example of the opposite figure illustrates the Proposition for the bamboo blossom and its stabilized tree, the double bamboo.
\end{minipage}
\begin{minipage}{0.6\textwidth}
\centering
\hskip 15pt
\begin{tikzpicture}[scale=0.3]
\tikzset{every leaf node/.style={draw,circle,fill},every internal node/.style={draw,circle,scale=0.001}}
\Tree [.{}
	[.{} {} 
		[.{} [.{} {} [.{} [.{} {} [.{} \edge[line width=2pt,dashed];\node[fill=white,draw=white]{};\edge[draw=white];\node[fill=white,draw=white]{}; {} ] ] {} ] ] {} ] ]
	{}
      ]
\draw (1.8,-1.8) node{{\footnotesize$q_1$}};
\draw (1.6,-3.8) node{{\footnotesize$q_{011}$}};
\draw (1.7,-6) node{{\footnotesize$q_{01011}$}};
\draw (1.8,-8.1) node{{\footnotesize$q_{0101011}$}};
\draw (-2.5,-2.8) node{{\footnotesize$q_{00}$}};
\draw (-2.5,-4.8) node{{\footnotesize$q_{0100}$}};
\draw (-2.8,-7) node{{\footnotesize$q_{010100}$}};
\draw (6,-5) node{$\leadsto$};
\draw (0,-11) node{$\left( \rond T,q\right)$};
\end{tikzpicture}
\begin{tikzpicture}[scale=0.3]
\tikzset{every leaf node/.style={draw,circle,fill},every internal node/.style={draw,circle,scale=0.001}}
\Tree [.{}
	[.{} {} 
		[.{} [.{} {} [.{} [.{} {} [.{} \edge[line width=2pt,dashed];\node[fill=white,draw=white]{};\edge[draw=white];\node[fill=white,draw=white]{}; {} ] ] {} ] ] {} ] ] 
    \edge[draw=white];\node[fill=white,draw=white,scale=4]{}; 
    \edge[draw=white];\node[fill=white,draw=white,scale=4]{}; 
		[.{} [.{} {} [.{} [.{} {} [.{} [.{} {} \edge[draw=white];\node[fill=white,draw=white]{};\edge[line width=2pt,dashed];\node[fill=white,draw=white]{};] {} ] ] {} ] ] {} ] ]
\draw (5.5,-2.8) node{{\footnotesize$q_1$}};
\draw (5,-4.8) node{{\footnotesize$q_1$}};
\draw (4.4,-7) node{{\footnotesize$q_1$}};
\draw (1.2,-3.8) node{{\footnotesize$q_1$}};
\draw (1.7,-6) node{{\footnotesize$q_1$}};
\draw (2.3,-8.1) node{{\footnotesize$q_1$}};
\draw (-5,-2.8) node{{\footnotesize$q_{00}$}};
\draw (-5.3,-4.8) node{{\footnotesize$q_{0100}$}};
\draw (-5.5,-7) node{{\footnotesize$q_{010100}$}};
\draw (-0.7,-3.8) node{{\footnotesize$q_{011}$}};
\draw (-1.2,-6) node{{\footnotesize$q_{01011}$}};
\draw (-1,-8.1) node{{\footnotesize$q_{0101011}$}};
\draw (0,-11) node{$\left( \widehat{\rond T},\widehat q\right)$};
\end{tikzpicture}
\end{minipage}

\subsection{Descent tree of a stable context tree}
\label{subsection:descent}

In the stable case, the finite contexts organize in a remarkable way: all the contexts that have the same $\alpha$-lis may be seen as the nodes of a tree called \emph{descent tree}. The labels of these nodes are read from right to left (unlike the usual case where labels are read from left to right). This situation is precised by the following lemmas. In particular, Lemma~\ref{lem:fondLem} explains how the various node types of a descent tree (with two children, one or no child) correspond to different context types having two, only one or the empty lis as a prefix.

\begin{lem}
\label{lem:contextStableTree}
Let $\left(\rond T,q\right)$ be a stable context tree.

(i) Any context $\alpha$-lis is a context.
In otherwords, $\rond S\subseteq\rond C$.

(ii) Assume that $c=\cdots [\alpha s]\in\rond C^f$. Then all $\sigma ^k(c)$, $0\leq k\leq |c|-|\alpha s|$ are also contexts.

(iii) For any $\alpha s\in\rond S$, the set $\{ c\in\rond C^f, c=\cdots [\alpha s] \}$ constitutes the nodes of a tree, defined below as a \emph{descent tree}.
\end{lem}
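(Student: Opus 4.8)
The plan is to prove the three items in order, since each one feeds into the next. For part (i), let $\alpha s$ be the $\alpha$-lis of some finite context $c$, so $\alpha s$ is a noninternal suffix of $c$ with $s$ internal. I want to show $\alpha s \in \rond C$, i.e.\ $\alpha s$ is itself a context (necessarily finite, being a subword of the finite word $c$). Write $c = v \alpha s$ with $v \in \rond W$. By iterated application of stability in the form $\sigma(\rond T) \subseteq \rond T$ (condition (i) of Proposition~\ref{prop:defstable}), every suffix of $c$ that arises by stripping letters off the left of $c$ still belongs to $\rond T$ whenever $c$ does — but $c$ is a leaf, not in $\rond T$ as an internal node; the cleaner route is to use condition (ii) of Proposition~\ref{prop:defstable}: if $d$ is a finite context and $\beta \in \rond A$ then $\beta d$ is noninternal. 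The point is that $\alpha s$ is noninternal (it is the $\alpha$-lis), and $s$ is internal, so $\alpha s$ is either a context or strictly external. I must rule out ``strictly external''. If $\alpha s$ were strictly external, then its unique context prefix $\rpref(\alpha s)$ would be a strict prefix of $\alpha s$, hence a strict prefix of $c$, contradicting that $c \in \rond C^f$ is a leaf (a leaf has no context among its strict prefixes). Hence $\alpha s \in \rond C^f \subseteq \rond C$, giving $\rond S \subseteq \rond C$.

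For part (ii), let $c = \cdots[\alpha s] \in \rond C^f$, and write $c = \beta_1 \cdots \beta_p \alpha s$ with $p = p_c \geq 0$, so $|c| - |\alpha s| = p+1$ counting $\alpha$ as well; the indices $0 \le k \le |c| - |\alpha s|$ range over $\sigma^0(c) = c$, then $\sigma(c) = \beta_2 \cdots \beta_p \alpha s$, \dots, down to $\sigma^{p}(c) = \alpha s$, down to $\sigma^{p+1}(c) = s$ — but note $s$ is internal, not a context, so I expect the intended range to stop at $\alpha s$ (the off-by-one in the statement should be read so that the last claimed context is $\sigma^{|c|-|\alpha s|}(c)$ with $|c|-|\alpha s| = p+1$ giving $\sigma^{p+1}(c)$; I will double-check this indexing against the authors' convention, but the substance is: all the ``intermediate'' left-truncations of $c$ down to its $\alpha$-lis are contexts). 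Each such $\sigma^k(c)$ for $0 \le k \le p$ is noninternal: this is exactly the definition of $\alpha$-lis (the $\alpha$-lis $\alpha s$ is the \emph{longest} internal suffix $s$ extended by one letter; all suffixes of $c$ strictly longer than $s$ are noninternal). And each $\sigma^k(c)$, $0 \le k \le p$, has no context as a strict prefix, because such a prefix would be a strict prefix of $c$, again contradicting $c \in \rond C^f$. A noninternal word with no proper context prefix must be a context itself. Hence every $\sigma^k(c)$, $0 \le k \le p$, lies in $\rond C^f$; for $k = p+1$ we land on $s$, internal, so we stop there, which is why the range is as stated (modulo the indexing convention).

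For part (iii), fix $\alpha s \in \rond S$ and set $D = \{ c \in \rond C^f : c = \cdots[\alpha s] \}$. By part (i), $\alpha s \in D$ (its own $\alpha$-lis is itself, and $\casc(\alpha s) = 1$). I want to equip $D$ with a tree structure whose root is $\alpha s$ and in which the parent of a context $c = \beta_1 \cdots \beta_p \alpha s$ (with $p \geq 1$) is $\sigma(c) = \beta_2 \cdots \beta_p \alpha s$; by part (ii) this parent is again in $D$ (it has the same $\alpha$-lis $\alpha s$, since stripping $\beta_1$ from the left does not change the longest internal suffix when all the intermediate truncations remain noninternal — one checks $s_{\sigma(c)} = s_c = s$ and $\alpha_{\sigma(c)} = \alpha_c = \alpha$). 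Iterating $\sigma$ strictly decreases length, so after finitely many steps we reach $\alpha s$; thus the ``parent'' map has no cycles and every element of $D$ is connected to the root. That makes $(D, \text{parent})$ a rooted tree, with an element $c$ at depth $p_c$ (its distance to the root being the number of letters prepended to its $\alpha$-lis). The labels are naturally read right-to-left: going from a node to its parent removes the \emph{leftmost} letter, i.e.\ moving away from the root \emph{prepends} a letter, so an edge from $\sigma(c)$ to $c$ is labelled by $\beta_1$, the new leftmost letter. This is the \emph{descent tree}; I will record that the children of a node $d \in D$ are exactly the $\beta d \in D$ for $\beta \in \rond A$, which (anticipating Lemma~\ref{lem:fondLem}) is where the $0$/$1$/$2$-children trichotomy will come from.

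The main obstacle is purely the bookkeeping in part (ii): getting the truncation indices and the off-by-one exactly right (whether the last claimed context is $\sigma^{p}(c) = \alpha s$ or whether the stated bound $|c| - |\alpha s|$ is meant inclusively and happens to coincide), and making airtight the step ``noninternal $+$ no proper context prefix $\Rightarrow$ is a context,'' which quietly uses that $\rpref$ is \emph{defined} on any noninternal word together with the fact that a leaf $c \in \rond C^f$ has, by definition of a saturated tree with leaves, no strict prefix that is a leaf or an infinite branch. Everything else (parts (i) and (iii)) is a direct unwinding of the definitions of $\alpha$-lis, of $\rpref$, and of stability via Proposition~\ref{prop:defstable}(ii).
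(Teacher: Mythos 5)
There is a genuine gap, and it sits exactly at the step you yourself flagged as needing to be ``airtight'': ruling out that $\alpha s$ (and, in your part (ii), $\sigma^k(c)$) is strictly external. You argue that if $\alpha s$ were strictly external, then $\rpref(\alpha s)$ would be a strict prefix of $\alpha s$, \emph{hence a strict prefix of $c$}. That implication is false: $\alpha s$ is a \emph{suffix} of $c=\beta_1\cdots\beta_{p}\alpha s$, so a prefix of $\alpha s$ begins with $\alpha$ while a prefix of $c$ begins with $\beta_1$; the two notions coincide only in the degenerate case $p=0$. The same error reappears verbatim in your part (ii) (``such a prefix would be a strict prefix of $c$''). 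This is not a reparable slip within your chosen route, because after you set aside the $\sigma(\rond T)\subseteq\rond T$ argument in favour of a ``cleaner route'', stability is never actually used, and the conclusion of (i) genuinely fails without it. Concretely, in the non-stable tree of Section~\ref{subsubsec:SfiniCIinfini} (finite contexts $00$, $1$ and the $01^p0^q1$), the context $01001$ has lis $01$ and $\alpha$-lis $001$, which is strictly external since $00$ is a context; yet $\rpref(001)=00$ is not a prefix of $01001$, so your claimed contradiction never materializes and your argument would wrongly ``prove'' that $001$ is a context.

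The fix is precisely the route you abandoned, and it is the paper's one-line proof. Since $c$ is a node of $\rond T$ (leaves are nodes of the tree), condition (i) of Proposition~\ref{prop:defstable} applied $k$ times gives $\sigma^k(c)\in\rond T$, i.e.\ $\sigma^k(c)$ is nonexternal, for every $k$. Maximality of the lis $s$ makes $\sigma^k(c)$ noninternal for $0\le k\le |c|-|\alpha s|$; note that $|c|-|\alpha s|=p_c$, so the last word in this range is $\alpha s$ itself and not $s$ --- there is no off-by-one to worry about. Nonexternal and noninternal means context, which gives (ii), hence (i) as the case $k=p_c$, and then (iii) exactly as you organize it via the parent map $c\mapsto\sigma(c)$ (that part of your write-up, including the verification that $s_{\sigma(c)}=s_c$ when $p_c\geq 1$, is fine once (ii) is in place).
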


\begin{proof}
Let $\alpha s\in\rond S$ and let $c\in\rond C^f$ such that $c=\cdots [\alpha s]$.
Since $\rond T$ is stable, for any $k\in\g N$, the node $\sigma ^k(c)$ is internal or a context.
By maximality of $s$, this implies that the $\sigma ^k(c)$, for $0\leq k\leq |c|-|\alpha s|$, have $\alpha s$ as a suffix and are noninternal, thus contexts.
This proves (ii), thus (i) and (iii).
\end{proof}

\begin{defi}
Let $\rond T$ be a context tree and $\alpha s\in\rond S$ be a context $\alpha$-lis.
The \emph{descent tree associated with $\alpha s$} is the tree whose nodes are the finite contexts of $\rond T$ having $\alpha s$ as an $\alpha$-lis. The nodes are labelled reading from right to left
(instead of the more common other labelling that reads words from left to right). The root is $\alpha s$. This tree is denoted by $\rond D_{\alpha s}$:
\[
\rond D_{\alpha s}=\left\{ \sigma ^n(c),~c\in\rond C,~c=\cdots [\alpha s],~0\leq n\leq |c|-|\alpha s| \right\}.
\]
The \emph{saturated descent tree associated with $\alpha s$} is the descent tree of $\alpha s$ completed by the daughters of all its nodes:
\[
\overline{\rond D_{\alpha s}}=\rond D_{\alpha s}\cup\left\{ \beta c,~\beta\in\rond A,~c\in\rond D_{\alpha s}\right\}.
\]
\end{defi}
As an illustration, see the double bamboo in Example \ref{exa:dbamboo}. For the left-comb of left-comb of Section~\ref{subsubsec:pgPg}, the descent tree that corresponds to the $\alpha$-lis $10^n1$ is simply an infinite left-comb.


\begin{exa}[\bf double bamboo]
\label{exa:dbamboo}
\ \\

\begin{minipage}{0.3\textwidth}
\centering
\begin{tikzpicture}[scale=0.3]
\tikzset{every leaf node/.style={draw,circle,fill},every internal node/.style={draw,circle,scale=0.01}}
\Tree 
[.{} 
	[.{} 
	{} 
		[.{} 
		[.{} 
		{} 
		 [.{} 
		 [.{} 
		  {} 
		  [.{} 
		  \edge[line width=1.2pt,dashed];\node[fill=white,draw=white]{};\edge[draw=white]; 
		  \node[fill=white,draw=white]{}; {} ] ] {} ] ] {} ] 
      ] 
    \edge[draw=white];\node[fill=white,draw=white,scale=4]{}; 
 [.{} 
	[.{} 
	{} 
		[.{} 
		[.{} 
		{} 
		 [.{} 
		 [.{} 
		  {} 
		  [.{} 
		  \edge[line width=1.2pt,dashed];\node[fill=white,draw=white]{};\edge[draw=white]; 
		  \node[fill=white,draw=white]{}; {} ] ] {} ] ] {} ] ]
	{} 
      ] 
]
\end{tikzpicture}
\end{minipage}
\begin{minipage}{0.7\textwidth}
\centering
\begin{tabular}{r|l}
$\alpha$-lis $\alpha s$&contexts having $\alpha s$ as an $\alpha$-lis\\
\hline
$00$&$(01)^k00, k\geq 0$; $(10)^k0, k\geq 1$\\
$11$&$(10)^k11, k\geq 0$ ; $(01)^k1, k\geq 1$
\end{tabular}
\end{minipage}
\end{exa}

\vskip 10pt
\begin{minipage}{0.3\textwidth}
\centering
The descent trees:
\vskip 5pt
\begin{tikzpicture}[baseline,scale=0.6]
	\Tree 
	[.{$00$} 
	\edge[draw=white];\node[fill=white,draw=white]{}; 
	[.{$100$} 
	[.{$0100$} 
	\edge[draw=white];\node[fill=white,draw=white]{}; 
	[.{$(10)^20$} 
	[.{$(01)^200$} 
	\edge[draw=white];\node[fill=white,draw=white]{}; 
	\edge[dashed];\node[fill=white,draw=white]{}; 
	] 
	\edge[draw=white];\node[fill=white,draw=white]{}; ] 
	\edge[draw=white];\node[fill=white,draw=white]{}; ] 
	\edge[draw=white];\node[fill=white,draw=white]{}; ] 
	\edge[draw=white];\node[fill=white,draw=white]{}; ]
	\draw (0,1) node{$\rond D_{00}$};
\end{tikzpicture}
\begin{tikzpicture}[baseline,scale=0.6]
	\Tree 
	[.{$11$} 
	[.{$011$} 
	\edge[draw=white];\node[fill=white,draw=white]{}; 
	[.{$1011$} 
	[.{$(01)^21$} 
	\edge[draw=white];\node[fill=white,draw=white]{}; 
	[.{$(10)^211$} 
	\edge[dashed];\node[fill=white,draw=white]{}; 
	\edge[draw=white];\node[fill=white,draw=white]{}; 
	] 
	] 
	\edge[draw=white];\node[fill=white,draw=white]{}; ] 
	\edge[draw=white];\node[fill=white,draw=white]{}; ] 
	\edge[draw=white];\node[fill=white,draw=white]{}; ] 
	\draw (0,1) node{$\rond D_{11}$};
\end{tikzpicture}
\end{minipage}
\begin{minipage}{0.7\textwidth}
\centering
The saturated descent trees:
\vskip 5pt
\begin{tikzpicture}[baseline,scale=0.6]
	\Tree 
	[.{$00$} 
	{$000$}
	[.{$100$} 
	[.{$0100$} 
	{$00100$}
	[.{$(10)^20$} 
	[.{$(01)^200$} 
	\edge[draw=white];\node[fill=white,draw=white]{}; 
	\edge[dashed];\node[fill=white,draw=white]{}; 
	] 
	{$1(10)^20$} ] 
	\edge[draw=white];\node[fill=white,draw=white]{}; ] 
	{$1100$} ] 
	\edge[draw=white];\node[fill=white,draw=white]{}; ]
	\draw (0,1) node{$\overline{\rond D_{00}}$};
\end{tikzpicture}
\hskip 10pt
\begin{tikzpicture}[baseline,scale=0.6]
	\Tree 
	[.{$11$} 
	[.{$011$} 
	{$0011$}
	[.{$1011$} 
	[.{$(01)^21$} 
	{$0(01)^21$}
	[.{$(10)^211$} 
	\edge[dashed];\node[fill=white,draw=white]{}; 
	\edge[draw=white];\node[fill=white,draw=white]{}; 
	] 
	] 
	{$11011$} ] 
	\edge[draw=white];\node[fill=white,draw=white]{}; ] 
	{$111$} ] 
	\draw (0,1) node{$\overline{\rond D_{11}}$};
\end{tikzpicture}
\end{minipage}

\vskip 20pt

\begin{lem}
\label{lem:fondLem}
Let $\left(\rond T,q\right)$ be a stable context tree.
Let $c\in\rond C$ be a context of $\rond T$.

\newcommand{\esp}{\hskip 12pt}
(i) Are equivalent:

\esp
(i.1) $0c\in\rond C$ and $1c\in\rond C$;

\esp
(i.2) $c$ does not admit any context lis as a prefix.

(ii) Let $\alpha\in\rond A$. Are equivalent:

\esp
(ii.1) $\alpha c\notin \rond C$ and $\overline{\alpha}c\in\rond C$;

\esp
(ii.2) there exists a unique context lis $t$ such that $c=t\cdots$, $\alpha t\in\rond C$ and $\overline{\alpha}t\notin\rond C$.

(iii) Are equivalent:

\esp
(iii.1) $0c\notin\rond C$ and $1c\notin\rond C$;

\esp
(iii.2) there exists a unique context lis $t_0$ and a unique context lis $t_1$ (that might be equal) such that:
$c=t_0\cdots=t_1\cdots$, $0t_0\in\rond C$ and $1t_1\in\rond C$.
\end{lem}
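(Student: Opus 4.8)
The plan is to exploit two structural facts about a stable context tree $\rond T$: first, by Lemma~\ref{lem:contextStableTree}(i), every context $\alpha$-lis is itself a context, so the ``context lis'' prefixes of $c$ that appear in the statement are genuine contexts of $\rond T$ lying on the path from the root to $c$; second, by stability (Proposition~\ref{prop:defstable}(ii)), for any context $d$ and any letter $\alpha$, the word $\alpha d$ is noninternal, hence $\alpha d$ is either a context or strictly external. This dichotomy is exactly the hinge of the whole lemma: attaching a letter $\alpha$ to the left of a context $c$ either produces a new context $\alpha c$, or ``overshoots'' into the external region, which happens precisely when $c$ already contains a context as a proper suffix-read-from-the-left, i.e.\ a context lis $t$ with $c=t\cdots$ and $\alpha t\notin\rond C$. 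I would first isolate this observation as the core sub-claim: for a context $c$ and a letter $\alpha$, $\alpha c\notin\rond C$ if and only if there is a (necessarily unique, by the prefix/ancestor structure) context lis $t$ with $c=t\cdots$ and $\alpha t\notin\rond C$; equivalently $\alpha c$ is external iff $\alpha t$ is external for the shortest context prefix $t$ of $c$ relevant to $\alpha$. Uniqueness of $t$ comes from the fact that contexts on the path to $c$ are totally ordered by the prefix relation, so there is at most one ``first'' context prefix whose left-extension by $\alpha$ leaves $\rond C$.

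Once that sub-claim is in hand, the three items follow by bookkeeping. For (i): $0c,1c\in\rond C$ simultaneously means, by the sub-claim, that neither $0$ nor $1$ prepended to $c$ overshoots, i.e.\ $c$ has no context lis as a prefix at all (if it had one, say $t$ with $\alpha t\notin\rond C$ for some $\alpha$ — and by stability $\alpha t$ being noninternal forces it external — then $\alpha c$ would be external, contradiction); conversely if $c$ has no context prefix that is a context lis, both left-extensions stay noninternal and, not being able to be external either (else $c$ would contain a context properly), are contexts. I would phrase this via: $\alpha c\in\rond C$ for both $\alpha$ $\iff$ $c$ together with $0c$ and $1c$ all lie ``just at'' the context frontier, which is the condition (i.2). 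For (ii): $\alpha c\notin\rond C$ while $\overline\alpha c\in\rond C$ says the left-extension by $\alpha$ overshoots but the one by $\overline\alpha$ does not; the sub-claim applied to $\alpha$ yields the unique context lis $t$ with $c=t\cdots$, $\alpha t\notin\rond C$, and applying it to $\overline\alpha$ (which does \emph{not} overshoot) gives $\overline\alpha t\in\rond C$ — here one must check $\overline\alpha t$ cannot be external, which again uses that $t$ is the minimal offending context and $\overline\alpha t$ noninternal. For (iii): $0c,1c\notin\rond C$ means both letters overshoot, so the sub-claim produces $t_0$ (for $0$) and $t_1$ (for $1$), possibly equal, each a context lis prefix of $c$ with $0t_0\in\rond C$, $1t_1\in\rond C$ respectively — the point being that $0t_0$ and $1t_1$ land exactly on contexts, not further out, because $t_0,t_1$ are chosen minimal.

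The main obstacle I anticipate is pinning down precisely why, when $\alpha c$ is external, the ``witness'' context lis $t$ can be taken so that $\alpha t$ is \emph{a context} rather than again external — that is, the minimality argument that selects $t$ as the \emph{shortest} context prefix of $c$ for which prepending $\alpha$ no longer stays in $\rond C$, and the verification that for this $t$ the word $\alpha t$ is itself in $\rond C$. This requires carefully walking up the path from $c$ toward the root: the empty word (or the shortest context prefix) is a context whose left-extension by $\alpha$ is noninternal; as we lengthen the context prefix toward $c$ there is a last stage where $\alpha$-extension still gives a context and the next where it gives an external word, and $t$ is that last ``good'' stage — one must confirm such a stage exists and that the relevant word is indeed the longest internal suffix, i.e.\ a \emph{lis}, invoking the maximality clause in Lemma~\ref{lem:contextStableTree}. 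The remaining verifications (noninternality of the various $\alpha d$, translating ``overshoot'' into ``contains a context lis prefix'', uniqueness from the linear order of ancestors) are routine applications of stability and the definition of $\alpha$-lis, so I would keep them brief.
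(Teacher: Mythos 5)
Your overall strategy (reduce everything to a single dichotomy about prepending a letter to a context, then do the bookkeeping for (i)--(iii)) is the right one and is close to the paper's, but your core sub-claim is stated with the wrong polarity, and as stated it is false. You assert that $\alpha c\notin\rond C$ iff there is a context lis $t$ with $c=t\cdots$ and $\alpha t\notin\rond C$. The correct hinge --- the one the lemma itself encodes in (ii.2) and (iii.2) --- is the opposite: $\alpha c\notin\rond C$ iff there is a context lis $t$ with $c=t\cdots$ and $\alpha t\in\rond C$. (If such a $t$ exists, $\alpha t$ is a strict prefix of $\alpha c$ and two distinct contexts cannot be prefixes of one another; conversely, if $\alpha c\notin\rond C$, then by stability $\alpha c$ is noninternal hence external, and $t:=\sigma\bigl(\rpref (\alpha c)\bigr)$ is the required context lis, with $\alpha t=\rpref (\alpha c)\in\rond C$.) A concrete counterexample to your version: in the double bamboo, $c=11$ is a context, $t=1$ is a context lis which is a prefix of $c$ with $0t=01\notin\rond C$ (it is internal), and yet $0c=011$ \emph{is} a context. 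Because of this reversal, your treatment of (ii) concludes ``$\alpha t\notin\rond C$ and $\overline{\alpha}t\in\rond C$'', which is the negation of what (ii.2) asserts; note that your own handling of (iii) uses the correct polarity ($0t_0\in\rond C$ and $1t_1\in\rond C$ when both letters overshoot), so the plan is also internally inconsistent.

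Two further slips feed this confusion. First, a context lis is by definition an \emph{internal} word (the longest internal strict suffix), not a context; what Lemma~\ref{lem:contextStableTree}(i) turns into a context is the context $\alpha$-\emph{lis} $\alpha t$, not $t$ itself --- indeed $t$ could not be both a context and a strict prefix of the context $c$. Second, stability (Proposition~\ref{prop:defstable}(ii)) yields noninternality of $\alpha d$ only for \emph{contexts} $d$; it says nothing about $\alpha t$ when $t$ is internal, and $\alpha t$ can perfectly well be internal (again $01$ in the double bamboo, with $t=1$ a context lis), so your argument for (i.1)$\Rightarrow$(i.2) breaks: the letter to use there is the one for which $\alpha t$ \emph{is} a context (such a letter exists because $t$ is a context lis), not the one for which it is not. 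Once the sub-claim is corrected, your bookkeeping does go through, and the converse implications can be dispatched as in the paper by observing that (i.1), (ii.1), (iii.1) on one side and (i.2), (ii.2), (iii.2) on the other are each exhaustive families of mutually exclusive cases, so proving the three forward implications suffices.
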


\begin{proof}
$\bullet (i.1\Rightarrow i.2)$ Assume that $t$ is a context lis such that $c=t\cdots$.
Then $0t$ or $1t$ is a context $\alpha$-lis.
Since $\rond T$ is stable, Lemma~\ref{lem:contextStableTree} implies that $0t$ or $1t$ is thus a context.
Consequently, $0c\notin\rond C$ or $1c\notin\rond C$ because two different contexts cannot be prefix of one another.

$\bullet (ii.1\Rightarrow ii.2)$
Assume (say) that $\alpha =0$, \emph{i.e.} that $0c\notin\rond C$ and $1c\in\rond C$.

(a)
Existence of $t$.
Since $\rond T$ is stable, $\rond T$ is a subtree of $\rond A\rond T$ so that $0c$, which is thus noninternal, is an external node.
Let $c'=\rpref (0c)$.
The context $c'$ is a strict prefix of $0c$ that satisfies $c'=0\cdots$.
Since $\rond T$ is stable, $\sigma (c')$ is nonexternal.
But $\sigma (c')$ cannot be a context because it is a prefix of $c$ which is a context.
Thus $\sigma (c')$ is internal.
This implies that $t:=\sigma (c')$ is the lis of $c'$ and a prefix of $c$ as well, and that $0t=c'\in\rond C$.
Finally, since $c'=0t=\rpref (0c)$, $t$ is a prefix of $c$, so that $1t$ is a prefix of the context $1c$.
Consequently, $1t\notin\rond C$ because two different contexts cannot be prefix of one another.

(b)
uniqueness of $t$.
Assume that $c=t\cdots =t'\cdots$ where $t$ and $t'$ are different context lis's such that $0t\in\rond C$ and $0t'\in\rond C$.
Then $0t$ and $0t'$ are different contexts that are both prefixes of $0c$, thus prefix of one another, which is not possible.

$\bullet (iii.1\Rightarrow iii.2)$
Assume that $0c\notin\rond C$ and $1c\notin\rond C$.
As in the proof of $(ii.1\Rightarrow ii.2)$, there is a unique context lis $t_0$
such that $c=t_0\cdots$ and $0t_0\in\rond C$.
By the same argument, there is a unique context lis $t_1$ such that $c=t_1\cdots$ and $1t_1\in\rond C$.

$\bullet $ End of the proof.

On one side, (i.1), (ii.1) and (iii.1) are disjoint cases that cover all possible situations.
On the other side, the same can be said about (i.2), (ii.2) and (iii.2), because of what follows. A context cannot be written $c=s\cdots =t\cdots$ where, for a same $\alpha\in\rond A$, $\alpha s\in\rond S$, $\alpha t\in\rond S$ and $s\neq t$.
Indeed, once again, $\alpha s$ and $\alpha t$ would be different contexts that are both prefixes of $\alpha c$, thus prefix of one another, which is not possible.
Thus, the three equivalences are proven.
\end{proof}

\begin{rem}
\label{rem:descent}
In terms of descent trees, Lemma~\ref{lem:fondLem} can be seen the following way.
A context that belongs to case (i) has valence $3$ in its descent tree
(one parent, two children, it is called a bifurcation).
A context that belongs to case (ii) has valence $2$ in its descent tree
(one parent, one child, it is called monoparental).
A context that belongs to case (iii) has valence $1$ in its descent tree (it is a leaf: one parent, no child).
\end{rem}

\subsection{Properties of Q}

\begin{defi}
A (finite or denumerable) matrix $\left( a_{r,c}\right)_{r,c}$ is said \emph{row-stochastic} whenever all its rows (are summable and) sum to $1$, \emph{i.e.}
\[
\forall r,~\sum _{c}a_{r,c}=1.
\]
\end{defi}

\begin{pro}\label{pro:stochasticity}
Let $\left(\rond T,q\right)$ be a stable probabilised context tree.
Assume that the cascade series~\eqref{convCasc} converge.
Then, the matrix $Q$ is row-stochastic.
\end{pro}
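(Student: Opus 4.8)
The plan is to fix a row index $\alpha s\in\rond S$ and show that $\sum_{\beta t\in\rond S}Q_{\alpha s,\beta t}=1$ by reorganising the double sum defining these entries as a single sum over finite contexts having $\alpha s$ as $\alpha$-lis, then recognising that sum as $\kappa_{\alpha s}$, which equals $1$ because the tree is stable. Concretely, by the definition of $Q$ in Subsection~\ref{subsec:Q},
\[
\sum_{\beta t\in\rond S}Q_{\alpha s,\beta t}
=\sum_{\beta t\in\rond S}\ \sum_{\substack{c\in\rond C^f\\ c=t\cdots\\ c=\cdots[\alpha s]}}\casc(\beta c).
\]
The key combinatorial point, which comes straight from Lemma~\ref{lem:contextStableTree} and Lemma~\ref{lem:fondLem} (see Remark~\ref{rem:descent}), is that for a \emph{stable} tree the map $c\mapsto(\beta t,c)$, where $\beta t$ is the $\alpha$-lis of $\beta c$ — equivalently $\beta c$ truncated at its longest internal suffix — is a bijection between the set $\{c\in\rond C^f,\ c=\cdots[\alpha s]\}$ and the set of pairs $(\beta t,c)$ appearing in the double sum. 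Indeed, since $c$ is a context and $\rond T$ is stable, Lemma~\ref{lem:contextStableTree}(i) gives that $\beta c$ is noninternal, so $\beta c$ has a genuine $\alpha$-lis $\beta t\in\rond S$ for each $\beta\in\rond A$; conversely each pair in the double sum is obtained exactly once this way. Hence the double sum collapses to
\[
\sum_{\beta t\in\rond S}Q_{\alpha s,\beta t}
=\sum_{\substack{c\in\rond C^f\\ c=\cdots[\alpha s]}}\bigl(\casc(0c)+\casc(1c)\bigr).
\]

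Now I invoke Remark~\ref{rem:cascRem}: for any finite word $w$, $\casc(0w)+\casc(1w)=\casc(w)$ precisely when $w$ is noninternal. Every context $c$ is noninternal, so $\casc(0c)+\casc(1c)=\casc(c)$, and therefore
\[
\sum_{\beta t\in\rond S}Q_{\alpha s,\beta t}
=\sum_{\substack{c\in\rond C^f\\ c=\cdots[\alpha s]}}\casc(c)
=\kappa_{\alpha s},
\]
using the definition~\eqref{defKappa} of $\kappa_{\alpha s}$, which is a finite number by the standing assumption that the cascade series~\eqref{convCasc} converge. (The convergence also justifies interchanging the order of summation above, since all terms are nonnegative.) It remains to check that $\kappa_{\alpha s}=1$ for every $\alpha s\in\rond S$ when $\rond T$ is stable.

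For the last step I would argue directly on the descent tree $\rond D_{\alpha s}$. I expect this to be the main obstacle, since it requires a genuine use of stability rather than bookkeeping. The idea: the root of $\rond D_{\alpha s}$ is $\alpha s$ with $\casc(\alpha s)=1$, and for any node $c$ of the descent tree, the three cases of Lemma~\ref{lem:fondLem} say that $c$ has either two children $0c,1c$ (both contexts), one child, or none, in $\rond D_{\alpha s}$; in the ``monoparental'' case $\alpha c$ (say) is again a context of $\rond D_{\alpha s}$ while $\overline\alpha c\notin\rond C$, and in the leaf case neither $0c$ nor $1c$ is a context. Writing $\casc(\beta c)=q_{\rpref(c)}(\beta)\casc(c)$ for the children, and summing $\casc$ over all nodes at ``descent-depth'' $\le n$, a telescoping / martingale-type identity shows that the total mass $\sum_{c\in\rond D_{\alpha s}}\casc(c)$ equals the limit of the mass pushed to depth $n$, which by convergence of the cascade series tends to the full sum. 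Balancing the contributions of bifurcations (where $q_{\rpref(c)}(0)+q_{\rpref(c)}(1)=1$ splits the mass), monoparental nodes and leaves then yields $\sum_{c\in\rond D_{\alpha s}}\casc(c)=\casc(\alpha s)=1$, i.e. $\kappa_{\alpha s}=1$. This is exactly the content of $\casc(0c)+\casc(1c)=\casc(c)$ propagated down the descent tree, so in fact the two displays above already give $\kappa_{\alpha s}=\sum_{\beta t\in\rond S}Q_{\alpha s,\beta t}$, and the remaining work is to show this common value is $1$; the cleanest route is to observe that the same reorganisation applied to the root shows $\kappa_{\alpha s}$ equals the total cascade mass of the descent tree, which telescopes to $\casc(\alpha s)=1$.
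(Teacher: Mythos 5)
There is a genuine gap: your reorganisation of the double sum is wrong, and so is the claim $\kappa_{\alpha s}=1$; the two errors are consistent with each other, which is why the argument appears to close. A term $\casc(\beta c)$ occurs in $\sum_{\beta t\in\rond S}Q_{\alpha s,\beta t}$ only if some context lis $t$ is a \emph{prefix} of $c$ with $\beta t\in\rond S$, and by Lemma~\ref{lem:fondLem} this happens, for a given $\beta$, exactly when $\beta c\notin\rond C$: a context $c$ in case (i) of that lemma (both daughters $0c,1c$ are contexts) contributes \emph{no} term to the double sum, a context in case (ii) contributes one, a context in case (iii) contributes two. Hence the double sum collapses onto the cascades of the \emph{leaves of the saturated descent tree} $\overline{\rond D_{\alpha s}}$, not onto all words $\beta c$ with $c\in\rond D_{\alpha s}$. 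Your parametrisation by ``the $\alpha$-lis of $\beta c$'' cannot be the right one: for $c$ noninternal, the $\alpha$-lis of $\beta c$ is a \emph{suffix} of $\beta c$ and equals $\alpha s$ itself, whereas the column index $\beta t$ is built from a lis that is a prefix of $c$. A concrete failure: in the double bamboo with $\alpha s=00$, the daughter $100$ of the root $00$ is itself a context of $\rond D_{00}$, so $\casc(100)=q_{00}(1)>0$ is counted in your reorganised sum but occurs in no $Q_{00,\beta t}$, since there is no $t$ with $00=t\cdots$ and $1t\in\rond S$.

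What your computation actually evaluates is $\sum_{c\in\rond D_{\alpha s}}\casc(c)=\kappa_{\alpha s}$, which is $\geq 1$ and in general $>1$ (it is $1$ plus the cascades of all the other contexts having $\alpha$-lis $\alpha s$); if every $\kappa_{\alpha s}$ were equal to $1$, the normalisation~\eqref{posrec} and condition $(c_3)$ of Theorem~\ref{th:stable} would be vacuous, and the paper in fact uses only the lower bound $\kappa_{\alpha s}\geq 1$ there. The telescoping argument you sketch at the end is the right tool but proves a different identity: propagating $\casc(\ell)=\casc(0\ell)+\casc(1\ell)$ down the descent tree shows that the leaves of each truncated saturated tree carry total mass $1$, and the convergence of the cascade series is then exactly what is needed to rule out mass escaping to infinity (not to justify interchanging sums, which follows from nonnegativity alone), yielding $\sum_{\ell\in\rond L\left(\overline{\rond D_{\alpha s}}\right)}\casc(\ell)=1$; this is Lemma~\ref{lem:cascLem}. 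Combining that lemma with the correct accounting of the double sum via the three cases of Lemma~\ref{lem:fondLem} is precisely the paper's proof.
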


The row-stochasticity of $Q$ writes
\[
\forall \alpha s\in\rond S,~\sum _{\beta t\in\rond S}Q_{\alpha s,\beta t}=1.
\]

\begin{lem}
\label{lem:cascLem}
Let $\rond D$ be the descent tree associated with some $\alpha$-lis of a stable probabilised context tree and let $\overline{\rond D}$ be the associated saturated descent tree.

(i) For any $n\in\g N$, if $\overline{\rond D}_n=\{ w\in\overline{\rond D},~|w|\leq n\}$ denotes the $n$-th truncated tree of $\overline{\rond D}$ and $\rond L\left(\overline{\rond D}_n\right)$ the set of its leaves, then
\[
1
=\sum _{\ell\in\rond L\left(\overline{\rond D}_n\right)}\casc (\ell).
\]

(ii) Denote by $\rond I\left(\rond D\right)$ the set of internal nodes of $\rond D$ and $\rond L\left(\overline{\rond D}\right)$ the set of finite leaves of~$\overline{\rond D}$.
If $\displaystyle\sum _{\ell\in\rond I\left(\rond D\right)}\casc\left(\ell\right)$ is summable, then
\[
1
=\sum _{\ell\in\rond L\left(\overline{\rond D}\right)}\casc (\ell).
\]
\end{lem}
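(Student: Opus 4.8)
The plan is to prove (i) by induction on $n$, using the local branching structure of the saturated descent tree $\overline{\rond D}$, and then to deduce (ii) by a monotone limit argument. Write $\alpha s$ for the $\alpha$-lis to which $\rond D$ is associated. Recall that the root of $\rond D$ is $\alpha s$ itself and $\casc(\alpha s)=1$; so for $n=|\alpha s|$ the truncated tree $\overline{\rond D}_n$ has the single leaf $\alpha s$ and the identity $1=\casc(\alpha s)$ holds trivially. This is the base case. For the inductive step, I would pass from $\overline{\rond D}_n$ to $\overline{\rond D}_{n+1}$ by examining each leaf $\ell$ of $\overline{\rond D}_n$: either $\ell$ is a finite leaf of the full saturated tree $\overline{\rond D}$ (no children in $\overline{\rond D}$, so it remains a leaf, contributing the same $\casc(\ell)$), or $\ell$ has children in $\overline{\rond D}$. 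In the latter case, by the construction of $\overline{\rond D}$ and Lemma~\ref{lem:fondLem} (see Remark~\ref{rem:descent}), the node $\ell$, if it lies in $\rond D$ and is internal there, is replaced in $\overline{\rond D}_{n+1}$ either by its two children $0\ell,1\ell$ (bifurcation) or by one child in $\rond D$ together with one adjoined daughter (monoparental case), and in both situations the two replacing nodes are $0\ell$ and $1\ell$. Since $\ell$ is then a \emph{noninternal} word of the original context tree (it is a context of $\rond T$, or an adjoined daughter whose parent is internal in $\rond D$ hence… — here one must be slightly careful), Remark~\ref{rem:cascRem} gives $\casc(\ell)=\casc(0\ell)+\casc(1\ell)$, so the sum over leaves is preserved. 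Summing over all leaves of $\overline{\rond D}_n$ yields $\sum_{\ell\in\rond L(\overline{\rond D}_{n+1})}\casc(\ell)=\sum_{\ell\in\rond L(\overline{\rond D}_n)}\casc(\ell)=1$, completing the induction.

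For (ii), the leaves of the infinite saturated tree $\overline{\rond D}$ split into two families: the finite leaves, i.e. the set $\rond L(\overline{\rond D})$ appearing in the statement, and the ``leaves at infinity'', which correspond to the infinite branches of $\overline{\rond D}$. Each finite leaf of $\overline{\rond D}$ appears, from some $n$ on, as a leaf of every truncation $\overline{\rond D}_n$, with constant contribution $\casc(\ell)$. On the other hand, a leaf $\ell$ of $\overline{\rond D}_n$ that is \emph{not} yet a finite leaf of $\overline{\rond D}$ is either an internal node of $\rond D$ of length $n$, or an adjoined daughter $\beta c$ of such an internal node $c\in\rond I(\rond D)$ with $|c|=n-1$; in the latter case $\casc(\beta c)\le\casc(c)$. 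Hence the total mass carried by the ``not-yet-finalized'' leaves of $\overline{\rond D}_n$ is bounded above by a constant times $\sum_{c\in\rond I(\rond D),\,|c|\ge n-1}\casc(c)$, which tends to $0$ as $n\to\infty$ precisely because $\sum_{\ell\in\rond I(\rond D)}\casc(\ell)$ is assumed summable. Letting $n\to\infty$ in the identity of (i) and using monotone convergence for the finite-leaf part then gives $1=\sum_{\ell\in\rond L(\overline{\rond D})}\casc(\ell)$, as claimed.

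The main obstacle is the bookkeeping in the inductive step of (i): one must verify, case by case using Lemma~\ref{lem:fondLem}/Remark~\ref{rem:descent}, that when a leaf $\ell$ of $\overline{\rond D}_n$ acquires descendants in $\overline{\rond D}_{n+1}$, those descendants are \emph{exactly} $0\ell$ and $1\ell$ (so that Remark~\ref{rem:cascRem} applies cleanly), and that $\ell$ is noninternal in $\rond T$ whenever this happens — the delicate point being the adjoined daughters of monoparental nodes, whose parent is a context of $\rond T$ but which are themselves external nodes of $\rond T$; one checks that such a daughter is either a finite leaf of $\overline{\rond D}$ already (in which case nothing to do) or, being external, still satisfies $\casc(\ell)=\casc(0\ell)+\casc(1\ell)$ by Remark~\ref{rem:cascRem} since ``noninternal'' there includes external words. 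Once the correspondence of node types with the trichotomy of Lemma~\ref{lem:fondLem} is pinned down, both parts are routine. A secondary subtlety in (ii) is to make sure the bound $\casc(\beta c)\le\casc(c)$ and the comparison with the tail of $\sum_{\rond I(\rond D)}\casc$ are uniform enough; this follows from $q_c(\beta)\le 1$ and the product form of $\casc$.
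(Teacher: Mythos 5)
Your proposal is correct and follows essentially the same route as the paper: an induction on $n$ for (i) in which the only leaves of $\overline{\rond D}_n$ that split are nodes of $\rond D$, which are contexts of $\rond T$ (hence noninternal) so that Remark~\ref{rem:cascRem} gives $\casc(\ell)=\casc(0\ell)+\casc(1\ell)$, followed for (ii) by observing that the mass of the not-yet-finalized leaves at level $n$ is controlled by the tail of $\sum_{\ell\in\rond I(\rond D)}\casc(\ell)$ and tends to $0$. The only point you left hanging --- the adjoined daughters $\beta c\notin\rond D$ --- is a non-issue: such nodes never acquire children in $\overline{\rond D}$, so they are finite leaves of $\overline{\rond D}$ and fall into your first alternative.
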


\begin{proof}[Proof of Lemma~\ref{lem:cascLem}]
Denote by $\rond T$ the probabilised context tree and by $\alpha s$ the root of the descent tree~$\rond D$.

\mbox{
\begin{minipage}[b][][c]{300pt}
(i) Let $\ell$ be a leaf of $\overline{\rond D}_n$ and assume that $\ell$ is an internal node of $\overline{\rond D}_{n+1}$.
In particular, $\ell\in\rond D$ which means that $\ell$ is a suffix of some context $c$ with $\alpha$-lis $\alpha s$.
Then, either $\ell =c$ is a context, or $\ell$ writes $\ell =\cdots\alpha s$ and $c$ writes $c=\cdots\beta\ell=\cdots [\alpha s]$, $\beta\in\rond A$, which prevents $\ell$ to be internal (in $\rond T$) by maximality of $s$.
In any case, $\ell$ is a noninternal node of $\rond T$.
Consequently, by Remark \ref{rem:cascRem}, the daughters $0\ell$ and $1\ell$ are leaves of $\overline{\rond D}_{n+1}$ that satisfy $\casc (\ell)=\casc (0\ell )+\casc (1\ell )$.
This proves (i) by induction on $n$
(note that the cascade of an $\alpha$-lis is always $1$).
\end{minipage}
\hskip 1cm
\begin{tikzpicture}[scale=0.8]
\Tree [.\node[fill,circle,color=black,scale=0.4]{}; 
[.\node[fill,circle,color=black,scale=0.4]{}; 
\edge[dashed];{} 
\edge[white];{} %
\edge[white];{} %
\edge[dashed];{} 
]
[.\node[fill,circle,color=black,scale=0.4]{}; 
[.\node[fill,circle,color=black,scale=0.4]{}; 
\edge[dashed];{} 
[.\node[fill,circle,color=black,scale=0.4]{}; 
[.\node[fill,circle,color=black,scale=0.4]{}; 
\edge[dashed];{} 
\edge[dotted, line width=1.5pt];{} 
]
\edge[dashed];{} 
]
]
\edge[dashed];{} 
]
]
\draw (0.4,0.2) node{$\alpha s$};
\draw (1.1,-2.8) node{$\ell$};
\draw [dashed] (-1.5,-3.17)--++(4,0);
\draw (-1.9,-3.15) node{$n$};
\end{tikzpicture} 
}

(ii) Because of (i), for any $n$,
\[
\sum _{\substack{{\ell\in\rond L\left(\overline{\rond D}\right)}\\{|\ell |\leq n}}}\casc\left(\ell\right)
=\sum _{\ell\in\rond L\left(\overline{\rond D}_n\right)}\casc (\ell)
-\sum _{\substack{{\ell\in\rond I\left(\overline{\rond D}\right)}\\{|\ell |=n}}}\casc (\ell )
=
1-\sum _{\substack{{\ell\in\rond I\left(\rond D\right)}\\{|\ell |=n}}}\casc (\ell ).
\]
Because of the summability assumption, the last sum indexed by $\ell$ tends to $0$ when $n$ tends to infinity.
Since the cascades are nonnegative numbers, this shows that $\sum _{\ell\in\rond L\left(\overline{\rond D}\right)}\casc (\ell)$ is summable and proves the result.
\end{proof}

\begin{proof}[Proof of Proposition~\ref{pro:stochasticity}]
Let $\alpha s\in\rond S$.
Name $\rond D=\rond D_{\alpha s}$ its descent tree and $\overline{\rond D}=\overline{\rond D_{\alpha s}}$ its saturated descent tree.
The assumption guarantees that the family $\left(\casc (c)\right) _{c\in\rond D}$ is summable, or equivalently that the family $\left(\casc (\ell)\right) _{\ell\in\rond I\left(\rond D\right)}$ is summable (notations of Lemma~\ref{lem:cascLem}).
Remember that $Q_{\alpha s,\beta t}$ is the sum of $\casc\left( \beta c\right)$ where $c$ runs over all contexts such that $c=\cdots [\alpha s]=t\cdots$.
Take $c\in\rond D$, which means that $c$ is a context with $\alpha$-lis $\alpha s$.
The proof is now based on Lemma~\ref{lem:fondLem}.
If $c$ belongs to case (i), it admits no context-lis as a prefix so that it never appears as a term in some $Q_{\alpha s,\beta t}$.
If $c$ belongs to case (ii), then the only $\beta t\in\rond S$ such that $c=t\cdots$ is a leaf of $\overline{\rond D}$ whose sister is an internal node of $\overline{\rond D}$.
If finally $c$ belongs to case (iii), then the contexts $0t_0\in\rond S$ and $1t_1\in\rond S$ such that $c=t_0\cdots=t_1\cdots$ give rise to two sister leaves of $\overline{\rond D}$, $0c$ and $1c$.
Putting the three cases together one sees that any word of the form $\beta c$ where $c=\cdots [\alpha s]=t\dots$ and $\beta t\in\rond S$ appears once and only once as a term of some $Q_{\alpha s,\beta t}$.
Consequently, the sum of all $Q_{\alpha s,\beta t}$ where $\beta t$ runs over $\rond S$ can be written as the sum of the cascades of all leaves of $\overline{\rond D}$.
By Lemma~\ref{lem:cascLem}, this leads to the result.
\end{proof}

\begin{rem}
\label{rem:realisation}
Any stochastic matrix with strictly positive coefficients $A=(a_{ij})_{i\geqslant 0,j\geqslant 0}$ is a $\alpha$-lis transition matrix of a non-null stable context tree. It may be realised with a left-comb of left-combs (see Example~\ref{subsubsec:pgPg}).

\vskip 5pt
\begin{minipage}{0.4\textwidth}
\centering
\begin{tikzpicture}[scale=0.25]
		\tikzset{every leaf node/.style={draw,circle,fill},every internal node/.style={draw,circle,scale=0.01}}
\Tree [.{}
	[.{}
		[.{}
			[.{} 
					[.{} 
						[.{} \edge[dashed];\node[fill=white,draw=white]{};  [.{} 
		[.{}
			[.{}
				[.{}
					[.{} 
						[.{} \edge[dashed];\node[fill=white,draw=white]{}; {} ]
						{} ]
					{} ]
			{} ] 
		{} ]
	{} ]]
						[.{} 
		[.{}
			[.{}
				[.{}
					[.{} 
						[.{} \edge[dashed];\node[fill=white,draw=white]{}; {} ]
						{} ]
					{} ]
			{} ] 
		{} ]
	{} ]
					]
				[.{} 
		[.{}
			[.{}
				[.{}
					[.{} 
						[.{} \edge[dashed];\node[fill=white,draw=white]{}; {} ]
						{} ]
					{} ]
			{} ] 
		{} ]
	{} ]
 			]
		[.{} 
		[.{}
			[.{}
				[.{}
					[.{} 
						[.{} \edge[dashed];\node[fill=white,draw=white]{}; {} ]
						{} ]
					{} ]
			{} ] 
		{} ]
	{} ]
		] [.{} 
		[.{}
			[.{}
				[.{}
					[.{} 
						[.{} \edge[dashed];\node[fill=white,draw=white]{}; {} ]
						{} ]
					{} ]
			{} ] 
		{} ]
	{} ]
	]	 
	[.{} 
		[.{}
			[.{}
				[.{}
					[.{} 
						[.{} \edge[dashed];\node[fill=white,draw=white]{}; {} ]
						{} ]
					{} ]
			{} ] 
		{} ]
	{} ]
	]
\end{tikzpicture}
\end{minipage}
\begin{minipage}{0.5\textwidth}
The contexts are $0^{i}10^{j}1$, $i,j\geqslant 0$, the $\alpha$-lis of $0^{i}10^{j}1$ being $10^j1$.
One can check that
\[
	Q_{i,j}:=Q_{10^i1,10^j1}=\casc(10^j10^i1).
\]
\end{minipage}

\vskip 10pt
A calculation shows that if
\[
	q_{0^i10^j1}(1)=\frac{a_{ji}}{1-\sum_{k=0}^{i-1}(1-a_{jk})},
\]
then $Q_{ij}=a_{ij}$.
The question whether any stochastic matrix (with some zero coefficients) can be realised seems to be more difficult. Namely, zero coefficients in $Q$ assuming non-zero $q_c(\alpha)$ constraint the shape of the context tree.
\end{rem}

\begin{pro}
\label{Qirreductible}
Let $\left(\rond T,q\right)$ be a non-null stable probabilised context tree.
Then the matrix $Q$ is irreducible.
\end{pro}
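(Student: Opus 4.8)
The plan is to prove that the directed graph $\mathcal{G}$ on the vertex set $\rond S$, with an arc $\alpha s\to\beta t$ exactly when $Q_{\alpha s,\beta t}>0$, is strongly connected. First I would record the arc criterion implied by non‑nullness: every cascade of a finite word is a finite product of the nonzero numbers $q_c(\alpha)$, hence strictly positive, so for $\alpha s,\beta t\in\rond S$ one has $Q_{\alpha s,\beta t}>0$ if and only if there exists a finite context $c$ of $\rond T$ having $t$ as a prefix and having $\alpha s$ as its $\alpha$‑lis. (If $\rond C^f=\emptyset$ then $\rond S=\emptyset$ and there is nothing to prove, so assume $\rond C^f\neq\emptyset$.) Two elementary consequences of stability will be used repeatedly: prepending a letter to a finite context that remains a context does not change the $\alpha$‑lis (the longest internal strict suffix is unchanged); and if $c$ is a finite context then $\sigma(c)$ is again a node of $\rond T$, and if moreover $\sigma(c)$ is a strict prefix of some finite context, then $\sigma(c)$ is an internal word, in which case the $\alpha$‑lis of $c$ equals $c$ itself. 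In particular every $\alpha s\in\rond S$ is a finite context (Lemma~\ref{lem:contextStableTree}(i)) whose $\alpha$‑lis is itself, since $\sigma(\alpha s)$ is by definition an internal word; write $\phi(c)$ for the $\alpha$‑lis of a finite context $c$, so $\phi(\alpha s)=\alpha s$ for $\alpha s\in\rond S$.

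Next, given $\alpha s,\gamma u\in\rond S$, I would exhibit a $\mathcal G$‑path from $\alpha s$ to $\gamma u$ by running the dynamics underlying the context chain of Proposition~\ref{prop:defstable}(iv). Put $m=|\gamma u|\ge 1$ and let $(a_1,\dots,a_m)$ be the letters of $\gamma u$ read from right to left, so $a_ma_{m-1}\cdots a_1=\gamma u$. Define finite contexts $c_0,\dots,c_m$ by $c_0=\alpha s$ and $c_{j+1}=\lpref(a_{j+1}c_j)$: this is legitimate because, by stability (Proposition~\ref{prop:defstable}), $a_{j+1}c_j$ is noninternal, so $\lpref(a_{j+1}c_j)$ is a well defined context, finite since $c_j$ is. Using the identity $\lpref\bigl(a\,\lpref(W)\bigr)=\lpref(aW)$, valid in stable trees whenever $W$ extends a context (this is exactly the computation in the proof of Proposition~\ref{prop:defstable}(iv)), one gets $c_j=\lpref(a_j\cdots a_1c_0)$ for all $j$, whence $c_m=\lpref\bigl((\gamma u)c_0\bigr)=\gamma u$, because $\gamma u$ is a leaf of $\rond T$ and hence equals its own $\lpref$ once prolonged. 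Thus $c_0,\dots,c_m$ is a chain of finite contexts running from $\alpha s$ to $\gamma u$.

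The core of the argument is then to verify, for each $j$, that either $\phi(c_j)=\phi(c_{j+1})$ or $\mathcal G$ has an arc $\phi(c_j)\to\phi(c_{j+1})$; chaining these and recalling $\phi(c_0)=\alpha s$, $\phi(c_m)=\gamma u$ produces the desired $\mathcal G$‑path, and since $\alpha s,\gamma u$ are arbitrary this gives irreducibility. Fix $j$ and set $c=c_j$, $a=a_{j+1}$. If $ac$ is a context, then $c_{j+1}=ac$ and $\phi(c_{j+1})=\phi(c)$ by the first remark above. Otherwise $ac$ is external; then $c'=c_{j+1}=\lpref(ac)$ is the maximal in‑tree prefix of $ac$, a context, and (since $ac\notin\rond T$) a \emph{strict} prefix of $ac$, so $ac=c'w$ with $w$ nonempty. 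Hence $c=\sigma(ac)=\sigma(c')\,w$, so $\sigma(c')$ is a strict prefix of the finite context $c$; by the second remark $\sigma(c')$ is internal and $\phi(c')=c'$, so $c'\in\rond S$ and $\phi(c_{j+1})=c'$. Writing $c'=\beta t$ with $\beta$ its first letter and $t=\sigma(c')$: the finite context $c$ has $t$ as a prefix and $\alpha$‑lis $\phi(c)$, so the strictly positive term $\casc(\beta c)$ occurs in the sum $Q_{\phi(c),\,\beta t}=Q_{\phi(c),\,\phi(c_{j+1})}$; hence that entry is positive, i.e.\ $\phi(c_j)\to\phi(c_{j+1})$ is an arc. (A short remark also settles the singleton case: if all $m\ge1$ steps fell in the first case, $c_m$ would have length $2m\neq m$, so at least one step is of the second kind and produces a genuine arc.)

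The step I expect to demand the most care is precisely the analysis of the case "$ac$ external": one must pin down $c'$ as a strict prefix of $ac$, deduce from $\sigma(\rond T)\subseteq\rond T$ and the fact that a strict prefix of a leaf has two children that $\sigma(c')$ is an internal node, conclude that $c'$ is its own $\alpha$‑lis and therefore belongs to $\rond S$, and then recognise $c$ itself as the witness making $Q_{\phi(c),c'}$ positive. Everything else—the arc criterion from non‑nullness, the construction and telescoping of the context chain via Proposition~\ref{prop:defstable}(iv), and the invariance of the $\alpha$‑lis under prepending a letter—is routine bookkeeping.
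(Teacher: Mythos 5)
Your proof is correct and follows essentially the same route as the paper's: both walk from $\alpha s$ to the target by prepending the letters of the target $\alpha$-lis one at a time, recording an arc of $Q$ each time the growing word leaves the tree, the new state being the unique context prefix of the exiting word, which stability forces to be its own $\alpha$-lis, with the current context serving as the witness that the corresponding entry of $Q$ is positive. Your formulation via the context-chain recursion $c_{j+1}=\lpref(a_{j+1}c_j)$ merely packages the paper's hand-made bookkeeping (in particular its verification that the new $\alpha$-lis absorbs all the letters already added) into the identity $\lpref\bigl(a\,\lpref(W)\bigr)=\lpref(aW)$.
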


\begin{proof}
	Let $\alpha s$ and $\beta t$ be two $\alpha$-lis. We shall show that there is a path in the matrix $Q$ from $\alpha s$ to $\beta t$. Recall that there is a path of length one from $\alpha s$ to $\alpha' s'$ if there exists a context the $\alpha$-lis of which is $\alpha s$ and which begins with the lis $s'$, $\alpha's'$ being a $\alpha$-lis. In this case, the non-nullness of $(\rond T,q)$ implies that the transition from $\alpha s$ to $\alpha' s'$ is not zero.

To illustrate the transition from $\alpha s$ to $\beta t$, let us use the concatenated word
$$\beta t\alpha s=\beta t_qt_{q-1}\dots t_1\alpha s.$$
Let us start from $\alpha s$ and add letters from $t$ to the left. As $\alpha s\in\rond C$ and $\rond T$ is stable, when we add letters to the left, we get either contexts or strictly external words. Let us distinguish the two following cases:
\begin{itemize}
\item Either we always get contexts. In this case, since no internal node is obtained by adding the letters of $t$, $t\alpha s$ is a context with  $\alpha$-lis $\alpha s$ . Moreover, as $\beta t$ is a context, $\beta t\alpha s\notin\rond T$ which implies by  Lemma~\ref{lem:fondLem} that the context $t\alpha s$ begins with the unique lis $t$. As $\beta t$ is an $\alpha$-lis, this is the case of a one-step transition form $\alpha s$ to $\beta t$.
\item Or let $k$ be the least integer such that $t_k\dots t_1\alpha s\not\in\rond T$. Firstly $t_{k-1}\dots t_1\alpha s$ is a context with $\alpha$-lis $\alpha s$ (same argument as above). Secondly, by Lemma~\ref{lem:contextStableTree}, the context $t_{k-1}\dots t_1\alpha s$ begins with a unique lis, denoted by $u$ and $t_ku$ is an $\alpha$-lis. Now the aim is to have a transition with the whole of $t$. Therefore, one needs to check that $t_ku$ contains the whole end of $t$. If it were not the case, $u$ would write $t_{k-1}\dots t_l$ with $l>1$ and $t_k\dots t_l$ would be a context and a suffix of the internal node $\beta t_q\dots t_l$. This can't happen in a stable tree. We have found a transition from $\alpha s$ to $t_k u=t_k\dots t_1u'$. Now we go on adding letters from $t$ to the left and we again have the dichotomy described in the two items of this proof. By repeating this a finite number of times, we get the result.
\end{itemize}
\vskip -15pt
\end{proof}

\subsection{Stationary measure for the VLMC vs recurrence of $Q$}
\label{subsection:existence_stable}

The following result relates the existence and the uniqueness of a stationary probability measure of a VLMC to the recurrence of $Q$.
Let us recall the definition of recurrence and state a necessary and sufficient condition to get a (unique) invariant probability measure for stable trees.

\begin{defi}
Let $A=(a_{ij})$ be a stochastic irreducible countable matrix.
Denote by $a_{ij}^{(k)}$ the $(i,j)$-th entry of the (stochastic) matrix $A^k$.
The matrix $A$ is \emph{recurrent} whenever there exists $i$ such that
\[
\sum_{k=1}^{\infty}a_{ii}^{(k)}=1.
\]
\end{defi}
Any stochastic irreducible countable matrix may be viewed as the transition matrix of an irreducible Markov chain with countable state space. 
The recurrence means that there is a state $i$ (and this is true for every state because of irreducibility) for which the first return time is a.s. finite.
When in addition the expectation of the return times are finite, the matrix is classically called \emph{positive recurrent}.

\begin{theo}
\label{th:stable}
Let $(\rond T,q)$ be a non-null probabilised context tree.
Assume that $\rond T$ is stable.
Then, the following assertions are equivalent.
\begin{enumerate}
\item The VLMC associated to $(\rond T,q)$ has a unique stationary probability measure
\item The VLMC associated to $(\rond T,q)$ has at least a stationary probability measure
\item The three following conditions are satisfied:
\begin{itemize}
\item[$(c_1)$] the cascade series~\eqref{convCasc} converge
\item[$(c_2)$] $Q$ is recurrent
\item[$(c_3)$] $\sum _{\alpha s\in\rond S} v_{\alpha s}\kappa_{\alpha s}<\infty$ where $\g R \left( v_{\alpha s}\right) _{\alpha s}$ is the unique line of left-fixed vectors of $Q$.
\end{itemize}
\end{enumerate}
\end{theo}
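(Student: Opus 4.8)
The plan is to prove the cycle of implications $1\Rightarrow 2\Rightarrow 3\Rightarrow 1$. The implication $1\Rightarrow 2$ is immediate. The other two are obtained by feeding the main theorem (Theorem~\ref{fQbij}) into the structural facts already established for stable trees: under convergence of the cascade series the matrix $Q$ is row-stochastic (Proposition~\ref{pro:stochasticity}) and, for a non-null stable tree, always irreducible (Proposition~\ref{Qirreductible}), together with classical recurrence theory for irreducible stochastic matrices. A small but crucial observation used throughout is that, for a stable tree, every $\alpha s\in\rond S$ is itself a finite context whose $\alpha$-lis is $\alpha s$, so $\casc(\alpha s)=1$ and hence $\kappa_{\alpha s}\geq 1$; consequently any nonnegative vector $(v_{\alpha s})$ satisfies $\sum_{\alpha s}v_{\alpha s}\leq\sum_{\alpha s}v_{\alpha s}\kappa_{\alpha s}$, so a normalization $\sum_{\alpha s}v_{\alpha s}\kappa_{\alpha s}=1$ forces $(v_{\alpha s})$ to be summable.

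For $2\Rightarrow 3$: let $\pi$ be a stationary probability measure. Theorem~\ref{fQbij}(i) gives at once $(c_1)$ and the identity $\sum_{\alpha s}\pi(\rond L\overline{\alpha s})\kappa_{\alpha s}=1$. Then $Q$ is well defined, row-stochastic (Proposition~\ref{pro:stochasticity}) and irreducible (Proposition~\ref{Qirreductible}), and by Theorem~\ref{fQbij}(ii) the vector $v_{\alpha s}:=\pi(\rond L\overline{\alpha s})$ is a left-fixed vector of $Q$, which is everywhere strictly positive by Lemma~\ref{lem:pineq0}(i). By the observation above it is summable, hence after normalization it is an invariant probability distribution for the irreducible stochastic matrix $Q$; this forces $Q$ to be positive recurrent, in particular recurrent, giving $(c_2)$. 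Recurrence and irreducibility then yield uniqueness up to a positive scalar of the invariant measure, so $\g R(v_{\alpha s})$ is the unique line of left-fixed vectors, and $(c_3)$ holds because $\sum_{\alpha s}v_{\alpha s}\kappa_{\alpha s}=1<\infty$.

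For $3\Rightarrow 1$: assume $(c_1)$, $(c_2)$, $(c_3)$. By $(c_1)$ together with Propositions~\ref{pro:stochasticity} and~\ref{Qirreductible}, $Q$ is a row-stochastic irreducible matrix; by $(c_2)$ it is recurrent, so there is a unique line $\g R(v_{\alpha s})$ of (nonnegative) left-fixed vectors, and we may take $v_{\alpha s}>0$ for all $\alpha s$. By $(c_3)$, $0<\sum_{\alpha s}v_{\alpha s}\kappa_{\alpha s}<\infty$, so there is exactly one scalar multiple $(\lambda v_{\alpha s})$ satisfying $\sum_{\alpha s}\lambda v_{\alpha s}\kappa_{\alpha s}=1$; since $\mu\mapsto\sum_{\alpha s}\mu_{\alpha s}\kappa_{\alpha s}$ is linear on that line, $(\lambda v_{\alpha s})$ is the only left-fixed vector with this normalization. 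Theorem~\ref{fQbij}(ii) then gives a bijection between the stationary probability measures and this singleton, providing both existence and uniqueness, i.e. assertion~$1$.

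Once Theorem~\ref{fQbij} and Propositions~\ref{pro:stochasticity}--\ref{Qirreductible} are in hand the argument is essentially bookkeeping; the only point requiring care is invoking the correct classical statements about countable Markov chains — that an irreducible stochastic matrix admitting an invariant probability distribution is positive recurrent, and that an irreducible recurrent matrix has a one-dimensional cone of invariant measures. One must also keep the dictionary between ``invariant measure'' (nonnegative left-fixed vector) in the Markov-chain sense and ``left-fixed vector'' as used in Theorem~\ref{fQbij} straight; this causes no trouble because $f$ takes its values among nonnegative vectors, so only nonnegative left-fixed vectors ever occur in the bijection. I expect this reconciliation, and the verification that recurrence is the exact missing ingredient sitting between the two theorems' hypotheses, to be the main (mild) obstacle.
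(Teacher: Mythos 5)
Your proposal is correct and follows essentially the same route as the paper: the cycle $1\Rightarrow 2\Rightarrow 3\Rightarrow 1$, with the key observation that in the stable case every $\alpha s\in\rond S$ is itself a context so $\kappa_{\alpha s}\geq 1$, forcing summability of the left-fixed vector and hence positive recurrence of the irreducible stochastic matrix $Q$, combined with Theorem~\ref{fQbij} and the classical uniqueness of invariant measures for irreducible recurrent countable matrices. No gaps.
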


\begin{proof}\ \\
({\it 3.$\implies$1.}) Since $Q$ is recurrent and irreducible, there exists a unique line $\g Rv$ of left-fixed vectors for~$Q$, where $v=\left( v_{\alpha s}\right) _{\alpha s\in\rond S}$
(see for example \cite[Theorem 5.4]{seneta/06}).
Theorem~\ref{fQbij}(ii) coupled with the assumption on the series $\sum _{\alpha s\in\rond S} v_{\alpha s}\kappa_{\alpha s}$ entails directly the existence and uniqueness of a stationary probability measure.

({\it 2.$\implies$3.})
If there exists a stationary probability measure, then Theorem~\ref{fQbij}(i) asserts that the cascade series converge and that $Q$ admits at least one direction of left-fixed vectors $\g Rv$ such that $\sum _{\alpha s\in\rond S} v_{\alpha s}\kappa_{\alpha s}<\infty$.
Besides, every $\kappa _{\alpha s}$ is greater than $1$.
Indeed, the cascade of any $\alpha$-lis is $1$ and, in the stable case, any $\alpha$-lis is a context.
Thus, $v$ is summable and $Q$ is positive recurrent
(see for instance~\cite[Corollary of Theorem 5.5]{seneta/06}).
Since it is irreducible (Proposition~\ref{Qirreductible}), it admits a unique direction of left-fixed vectors $\g Rv$, proving ({\it 3.}) by Theorem~\ref{fQbij}.
\end{proof}

\begin{rem}
Actually, as shown in the end of the proof, when $Q$ is recurrent and when the series $\sum _{\alpha s\in\rond S} v_{\alpha s}\kappa_{\alpha s}$ converge, then $Q$ is positive recurrent.
\end{rem}

\begin{rem}
One can find a VLMC defined by a stable tree such that the cascade series converge and the matrix $Q$ is transient.

To build such an example, recall that, by Remark \ref{rem:realisation}, any stochastic matrix with strictly positive coefficients can be realized as the matrix $Q$ of a stable tree (take for example a left-comb of left-combs). The matrix $A=(a_{ij})_{i\geqslant 1,j\geqslant 1}$ defined by
\begin{itemize}
\item $a_{i,i+1}=1-\frac{1}{(i+1)^2}$ for all $i\geqslant 1$,
\item $a_{ij}=\frac{1}{(i+1)^22^{j-1}}$ if $j\geqslant i+2$,
\item $a_{ij}=\frac{1}{(i+1)^22^{i+1-j}}$ if $j\leqslant i$.
\end{itemize}
is stochastic and transient.
Indeed, if one associates a Markov chain to the stochastic matrix $A$ and if one denotes by $T_1$ the return time to the first state,
\[
\Proba(T_1=\infty)\geqslant\prod_{i\geqslant 1}a_{i,i+1}\geqslant\prod_{i\geqslant 2}\left(1-\frac{1}{i^2}\right) =\frac12.
\]
Consequently, $A$ does not have any nonzero left-fixed vector.

Consider now the VLMC defined by a left-comb of left-combs (see~\ref{subsubsec:pgPg}) probabilised in the unique way such that $Q_{10^q1,10^p1}=a_{q,p}$ for every $(p,q)$.
A simple computation shows that the series of cascade converge (geometrically).
Simultaneously, since $Q$ is transient, Theorem~\ref{th:stable} shows that the VLMC admits no stationary probability measure.
\end{rem}

\begin{rem}
\label{rem:contreexemple}
Assume that the context tree is stable and that the cascade series~\eqref{convCasc} converge.

If the VLMC admits a stationary probability measure, then, as already seen, every left-fixed vector of $Q$ is summable (the family of its coordinates is summable).

The reciprocal implication is false:
the left-fixed vectors of $Q$ may be summable while no finite measure is stationary for the VLMC, because condition $(c_3)$ in Theorem~\ref{th:stable} is not satisfied.
One can find in Section~\ref{subsubsec:pgpgSommable} such an example with a left-comb of left-combs.
\end{rem}

\begin{rem}
If one removes the stability assumption, the $\kappa _{\alpha s}$ may not be bounded below, so that the argument that shows the summability of left-fixed vectors of $Q$ fails.
Indeed, with a non-stable context tree having an infinite $\rond S$, one may have
\[
\inf _{\alpha s\in\rond S}\kappa _{\alpha s}=0.
\]
Such an example is developed in~\ref{subsec:housse}.
\end{rem}

Notice that Theorem~\ref{th:stable} also provides results for non-stable trees as the following corollary shows, using notations of Proposition~\ref{pro:stabilized}.

\begin{cor}
	\label{cor:stabilized}
	Let $(\rond T,q)$ be a non-null probabilised context tree. Suppose that $\rond T$ is stabilizable and denote by $\widehat{\rond T}$ its stabilized. If $(\widehat{\rond T},\widehat{q})$ satisfies the conditions of Theorem~\ref{th:stable}, then the VLMC associated with $(\rond T,q)$ admits a unique invariant probability measure. If not, it does not admit any invariant probability measure.
	In particular, a VLMC associated to such a context tree $(\rond T,q)$ never admits several stationary probability measures.
\end{cor}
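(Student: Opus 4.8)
The plan is to transport the whole question to the stabilized tree $\widehat{\rond T}$ by means of Proposition~\ref{pro:stabilized}, and then to quote Theorem~\ref{th:stable}, which is available because $\widehat{\rond T}$ is stable. First I would check that $(\widehat{\rond T},\widehat q)$ really falls within the scope of Theorem~\ref{th:stable}, i.e.\@ that it is a non-null stable probabilised context tree. The stabilizability hypothesis is precisely what guarantees that $\widehat{\rond T}=\bigcup_{n\in\g N}\sigma^n(\rond T)$ is again a context tree, and it is stable by its very construction (the smallest stable context tree containing $\rond T$). It is non-null because every context $c$ of $\widehat{\rond T}$ is noninternal in $\rond T$: were $c$ internal in $\rond T$, then $c0,c1\in\rond T\subseteq\widehat{\rond T}$, contradicting that $c$ is a context of $\widehat{\rond T}$. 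Hence $\lpref(c)$ relative to $\rond T$ is well defined and $\widehat q_c=q_{\lpref(c)}$ coincides with one of the nontrivial Bernoulli distributions of $(\rond T,q)$, so $\widehat q_c(\alpha)\neq 0$ for all $\alpha\in\rond A$.

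Next I would invoke Proposition~\ref{pro:stabilized}, which says that $(\rond T,q)$ and $(\widehat{\rond T},\widehat q)$ define the very same $\rond L$-valued Markov chain $(U_n)_n$ (same transition probabilities). Consequently a probability measure on $\rond L$ is stationary for the VLMC of $(\rond T,q)$ if and only if it is stationary for the VLMC of $(\widehat{\rond T},\widehat q)$; in particular the two VLMC have the same set of stationary probability measures.

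It then remains to apply Theorem~\ref{th:stable} to $(\widehat{\rond T},\widehat q)$. If this probabilised tree satisfies the conditions of the theorem (assertion~3, that is $(c_1)$--$(c_3)$), then assertion~1 yields a unique stationary probability measure for its VLMC, hence for the VLMC of $(\rond T,q)$. If those conditions fail, then assertion~3 is false, and by the equivalence with assertion~2 the VLMC of $(\widehat{\rond T},\widehat q)$---and therefore that of $(\rond T,q)$---has no stationary probability measure. In either case there is at most one stationary probability measure, which is the final assertion. I do not expect a genuine obstacle here; the only step that really uses the hypothesis is the verification above that $\widehat{\rond T}$ is a (non-null) context tree, which is exactly what stabilizability provides, everything else being bookkeeping on top of Proposition~\ref{pro:stabilized} and Theorem~\ref{th:stable}.
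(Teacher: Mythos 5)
Your proposal is correct and follows exactly the paper's route: the paper's proof of this corollary is the one-line observation that it follows from Proposition~\ref{pro:stabilized} (the two probabilised trees define the same Markov chain, hence have the same stationary measures) combined with Theorem~\ref{th:stable} applied to the stable tree $\widehat{\rond T}$. Your additional verifications that $(\widehat{\rond T},\widehat q)$ is indeed a non-null stable context tree are sound and merely make explicit what the paper leaves implicit.
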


\begin{proof}
	This is a consequence of Proposition~\ref{pro:stabilized} and Theorem~\ref{th:stable}.
\end{proof}

\subsection{Existence and uniqueness when $\rond S$ is finite and $\rond T$ stable}

When the matrix $Q$ is finite, stochasticity and irreducibility are sufficient to get a unique left-fixed vector, therefore

\begin{theo}[finite number of $\alpha$-lis]
\label{cor:finite}
Let $(\rond T,q)$ be a non-null probabilised context tree.
Assume that $\rond T$ is stable and that $\#\rond S<\infty$.
Then (i) and (ii) are equivalent.

(i) The VLMC associated to $(\rond T,q)$ has a unique stationary probability measure.

(ii) The cascade series~\eqref{convCasc} converge. 
\end{theo}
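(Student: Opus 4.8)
The plan is to deduce this statement directly from Theorem~\ref{th:stable}, by checking that under the extra hypothesis $\#\rond S<\infty$ the three conditions $(c_1)$, $(c_2)$, $(c_3)$ of that theorem collapse to the single condition $(c_1)$, namely the convergence of the cascade series~\eqref{convCasc}.

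For the implication $(ii)\Rightarrow(i)$, I would start from the assumption that the cascade series converge. By Proposition~\ref{pro:stochasticity} the matrix $Q$ is then row-stochastic, and by Proposition~\ref{Qirreductible} it is irreducible (here the non-nullness of $(\rond T,q)$ and the stability of $\rond T$ are used). Since $\rond S$ is finite, $Q$ is a \emph{finite} irreducible stochastic matrix, hence positive recurrent by the standard theory of finite Markov chains; in particular condition $(c_2)$ holds and $Q$ possesses a unique line $\g R v$ of left-fixed vectors, with $v=(v_{\alpha s})_{\alpha s\in\rond S}$ having only finitely many nonnegative coordinates. Moreover each $\kappa_{\alpha s}$ is finite, precisely because the cascade series converge, so $\sum_{\alpha s\in\rond S}v_{\alpha s}\kappa_{\alpha s}$ is a finite sum of finite numbers and condition $(c_3)$ holds automatically. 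Thus all three conditions in assertion 3 of Theorem~\ref{th:stable} are satisfied, and the VLMC admits a unique stationary probability measure.

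For the reverse implication $(i)\Rightarrow(ii)$, I would simply observe that a unique stationary probability measure is in particular \emph{at least one} stationary probability measure, so assertion 2 of Theorem~\ref{th:stable} holds; the equivalence with assertion 3 then forces condition $(c_1)$, which is exactly the convergence of the cascade series.

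There is no genuine obstacle here: the content of the argument is entirely contained in Theorem~\ref{th:stable}, and the only point worth isolating is \emph{why} finiteness of $\rond S$ is exactly the right hypothesis. It is what makes $(c_2)$ free (a finite irreducible stochastic matrix is always recurrent, whereas an infinite one need not be), and it is what turns $(c_3)$ into a finite sum of finite quantities, hence convergent as soon as the $\kappa_{\alpha s}$ are finite. Care should be taken to invoke the finite-dimensional Perron--Frobenius/Markov-chain facts (existence, uniqueness up to scaling, and recurrence of a finite irreducible stochastic matrix) rather than the countable-matrix machinery, but nothing beyond that is needed.
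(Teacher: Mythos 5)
Your proof is correct and follows essentially the same route as the paper: both arguments rest on the facts that $Q$ is row-stochastic (Proposition~\ref{pro:stochasticity}), irreducible (Proposition~\ref{Qirreductible}) and finite-dimensional, hence admits a unique direction of left-fixed vectors, and that the $\kappa_{\alpha s}$ are finite so the normalization condition is automatic. The only cosmetic difference is that the paper invokes Theorem~\ref{fQbij} directly (so it never needs to mention recurrence), whereas you pass through Theorem~\ref{th:stable} and verify $(c_2)$ and $(c_3)$ explicitly; the substance is identical.
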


\begin{proof}
(i)$\implies$(ii) is contained in Theorem~\ref{fQbij}(i).
Assume reciprocally that the cascade series converge.
Since $Q$ is stochastic, irreducible and finite dimensional, it admits a unique direction of left-fixed vectors, so that Theorem~\ref{fQbij}(ii) allows us to conclude.
\end{proof}

To see how this theorem applies, see both examples in Section~\ref{sec:pgPd}.
The first one -- the left-comb of right-combs --  is spectacularly simple ($\#\rond S=1$).
The second one is a straightforward application of Theorem~\ref{cor:finite}.

\subsection{The $\alpha$-lis process as a semi-Markov chain}
\label{subsection:semimarkov}

For this section, semi-Markov chains can be comprehended thanks to \cite{barbu/limnios/08}. 

Remember that for a VLMC $(U_n)$ defined on a \emph{stable} context tree, by (iv) in Definition \ref{subsection:defstable}, the process $(C_n)_{n\geq0}$ defined by
\[
C_n = \lpref\left(\overline{U_n}\right)
\]
defines a Markov chain with state space $\rond C$, the set of contexts. Moreover, the process $(Z_n)_{n\geq0}$ of $\alpha$-lis of contexts, where
\[Z_n=\alpha_{C_n}s_{C_n},\]
is a semi-Markov chain with state space $\rond S$, as detailed in the following.
\begin{defi}
\label{def:semiMarkov}
Let $(J_n,S_n)_{n\geq0}$ be a Markov chain with state space $\rond S\times \g N$ 
such that $S_0 = 0$ and $(S_n)$ is increasing.
The semi-Markov chain associated with $(J,S)$ is the $\rond S$-valued process $(Z_n)_{n\geq0}$ defined by $Z_0 = J_0$ and
\[
\forall k \hbox{ such that } S_n\leq k < S_{n+1}, \hskip 5mm Z_k = J_n.
\]
In otherwords, the $S_n$ are jump times and $Z_k$ stagnates at a same state between two successive jump times. 
\end{defi}
In a VLMC $(U_n)$ defined on a \emph{stable} context tree, both processes $(C_n)$ and $(Z_n)$ evolve as follows: start with $Z_0=\alpha s$ which is the $\alpha$-lis of $C_0$. 
As time goes by, $U_n$ grows by addition of letters
and $C_n$ goes down in the descent tree of $\alpha s$ (the length of $C_n$ increases and $Z_n$ remains equal to $\alpha s$) until there is $n_0\geq 0$ and $\beta\in\rond A$ such that $\beta C_{n_0}\not\in\rond C$. Since a descent tree is not the complete tree (if not, the context tree would also be the complete tree, which is not allowed for a VLMC), $n_0$ is finite and lemma \ref{lem:fondLem} implies the existence of a context lis $t$, prefix of $C_{n_0}$ such that  $\beta t\in \rond S$. Therefore, at the next time step, with nonzero probability $q_{C_{n_0}}(\beta)$, one has $C_{n_0+1}=Z_{n_0+1}=\beta t$, the process $C_n$ has gone up to the root of the descent tree $\rond D_{\beta t}$. With probability $q_{C_{n_0}}(\beta)$, at time $n_0+1$, the process $(Z_n)$ ``changes'' state; in this case, 
$n_0+1$ is a jump time. 
Remark 
that one may have that $n_0+1$ is a jump time \emph{and} $Z_{n_0+1} = Z_{n_0}$ (it is the case when the context process goes back to the root of the descent tree it belongs to).

More generally, the evolution of $C_n$ and $Z_n$ shows that jump times may be defined as follows (with $S_0=0$)~: for $n\geq 1$,
\[S_n=\min\{k>S_{n-1},C_k=Z_k\}.\]
In particular, $C_{S_n} = Z_{S_n}$. At all the other times $k$, $Z_k$ is a strict suffix of $C_k$. The jump times may be equivalently defined as
\[S_n=\min\{k>S_{n-1},|C_k|\le|C_{k-1}|\}.\]
For $n\geq 0$, let 
\[J_n=Z_{S_n}\]
be the state of the $\alpha$-lis process at the $n$-th jump, so that $S_{n+1}-S_{n}$ is the sojourn time in the state~$J_{n}$. For $k\geq 1$, if $N(k)=\max\{n, S_n\leqslant k\}$ is the number of jump times in the interval $[1,k]$, one also have $J_{N(k)}=Z_k$.
\begin{pro}
\label{pro:semiMarkov}
Let $(U_n)$ be a VLMC on a stable context tree and assume that the cascade series~\eqref{convCasc} converge. Let  $(J_n,S_n)$ be as defined above. Then the sojourn times $S_{n+1}-S_{n}$ are a.s. finite, $(J_n,S_n)$ is a Markov chain on 
$\rond S\times \g N$, $(S_n)$ is increasing and $(Z_n)$ is a semi-Markov chain associated with $(J,S)$.
\end{pro}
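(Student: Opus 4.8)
The plan is to unwind, in the stable case, the joint dynamics of the context chain $(C_n)$ and the $\alpha$-lis chain $(Z_n)$ inside descent trees, and then read off the four assertions from the strong Markov property of $(C_n)$ together with the convergence of the cascade series. Recall first that, by Proposition~\ref{prop:defstable}(iv), $(C_n)_{n\geq0}$ is a Markov chain on the countable set $\rond C$, with transitions $c\mapsto\lpref(\beta c)$ of probability $q_c(\beta)$ for $\beta\in\rond A$ (stability enters through the identity $\lpref(\beta\overline s)=\lpref(\beta\lpref(\overline s))$ established in the proof of Proposition~\ref{prop:defstable}). A time $k$ is a jump time, i.e. $C_k=Z_k$, exactly when $C_k\in\rond S$: an element of $\rond S$ is its own $\alpha$-lis, whereas a node of a descent tree $\rond D_{\alpha s}$ distinct from its root has $\alpha$-lis $\alpha s$, hence is not in $\rond S$. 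Moreover, starting from a context $c\in\rond D_{\alpha s}$, Proposition~\ref{prop:defstable}(ii) forces $\beta c$ to be either a context or strictly external; in the latter case Lemma~\ref{lem:fondLem} gives $\lpref(\beta c)=\beta t\in\rond S$ (a jump to the root of $\rond D_{\beta t}$), and in the former case $\beta c$ is a child of $c$ in $\rond D_{\alpha s}$. Thus, as long as no jump occurs, $(C_k)$ can only descend inside the current descent tree $\rond D_{\alpha s}$, and throughout such a descent $Z_k$ stays equal to $\alpha s$ since every node of $\rond D_{\alpha s}$ has $\alpha$-lis $\alpha s$.

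The key quantitative step I would isolate is the identification of path probabilities with cascades: for $w\in\rond D_{\alpha s}$ with $|w|=|\alpha s|+m$, the probability that $(C_k)$ runs along the unique path of $\rond D_{\alpha s}$ from the root $\alpha s$ down to $w$ equals $\casc(w)$. Indeed this path visits the contexts $\sigma^m(w)=\alpha s,\ \sigma^{m-1}(w),\ \dots,\ \sigma^0(w)=w$ (these are all contexts by Lemma~\ref{lem:contextStableTree}(ii)), the step $\sigma^j(w)\to\sigma^{j-1}(w)$ having probability $q_{\sigma^j(w)}(\beta_j)$ where $\beta_j$ is the $j$-th letter of $w$; multiplying over $j$ and comparing with Definition~\ref{def:cascade}, in which $\lpref\sigma^k(w)=\sigma^k(w)$ thanks to stability, gives exactly $\casc(w)$. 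Combining with the previous paragraph, for $n\geq1$ the event $\{S_{n+1}-S_n>m\}$ is precisely the event that no jump occurs during the $m$ steps following $S_n$, i.e. that $C_{S_n+m}$ is a node of $\rond D_{J_n}$ at depth $m$; so the strong Markov property of $(C_k)$ at the stopping time $S_n$, together with $C_{S_n}=J_n$, yields
\[
\Proba\left(S_{n+1}-S_n>m\mid J_n=\alpha s\right)
=\sum_{\substack{w\in\rond D_{\alpha s}\\|w|=|\alpha s|+m}}\casc(w).
\]
In the stable case $\rond D_{\alpha s}$ is exactly the set of finite contexts with $\alpha$-lis $\alpha s$, so by the convergence hypothesis~\eqref{convCasc} the series $\sum_{w\in\rond D_{\alpha s}}\casc(w)=\kappa_{\alpha s}$ is finite; hence the right-hand side above, being its tail, tends to $0$ as $m\to\infty$, whence $\Proba(S_{n+1}-S_n=\infty\mid J_n=\alpha s)=0$. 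A completely analogous computation, starting from $C_0$ instead of from a root, shows that $S_1$ is a.s. finite as well. Therefore all sojourn times are a.s. finite, each $S_n$ is a.s. finite, and $(S_n)$ is strictly increasing by its very definition.

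There remains the Markov property of $(J_n,S_n)$ and the semi-Markov property of $(Z_n)$. Since $J_n=C_{S_n}=Z_{S_n}\in\rond S$ for $n\geq1$ and $S_n\in\g N$ with $S_0=0$, the process $(J_n,S_n)$ is $\rond S\times\g N$-valued; applying once more the strong Markov property of $(C_k)$ at $S_n$ and using $C_{S_n}=J_n$, the pair $(J_{n+1},\,S_{n+1}-S_n)$ --- the state reached at the next jump and the sojourn time --- has conditional law given $(J_0,S_0),\dots,(J_n,S_n)$ depending only on $J_n$, through a kernel independent of $n\geq1$; hence $(J_n,S_n)$ is a Markov chain on $\rond S\times\g N$ with $S_0=0$ and $(S_n)$ increasing. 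Finally $Z_0=Z_{S_0}=J_0$, and for $S_n\leq k<S_{n+1}$ no jump has occurred since $S_n$, so $C_k$ lies in $\rond D_{J_n}$ and $Z_k=\alpha_{C_k}s_{C_k}=J_n$; this is precisely Definition~\ref{def:semiMarkov}, so $(Z_n)$ is the semi-Markov chain associated with $(J,S)$. I expect the genuinely delicate points to be the identification of path probabilities with cascades (which requires carefully unwinding $\lpref$, the right-to-left labelling of descent trees, Lemma~\ref{lem:contextStableTree} and Definition~\ref{def:cascade}) and the clean use of the strong Markov property of $(C_k)$ at the random times $S_n$; the rest is bookkeeping on descent trees.
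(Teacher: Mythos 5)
Your proof is correct and follows the paper's strategy in its essentials: both arguments rest on identifying the probability that the context chain $(C_n)$ runs down a branch of a descent tree with the cascade of the node reached, and both derive the Markov property of $(J_n,S_n)$ from the (strong) Markov property of $(C_n)$ at the jump times. The one place where you genuinely diverge is the a.s.\ finiteness of the sojourn times: the paper computes the kernel $q_{\alpha s,\beta t}(k)=\sum_{c=t\cdots[\alpha s],\,|c|-|\alpha s|=k}\casc(\beta c)$, sums it over $k$ and $\beta t$, and invokes the row-stochasticity of $Q$ (Proposition~\ref{pro:stochasticity}, itself resting on Lemma~\ref{lem:cascLem}), whereas you bound the survival probability $\Proba\left(S_{n+1}-S_n>m\mid J_n=\alpha s\right)$ by the depth-$m$ slice of the convergent series $\kappa_{\alpha s}=\sum_{w\in\rond D_{\alpha s}}\casc(w)$ and let $m\to\infty$. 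Your route is more self-contained, since it uses the convergence of the cascade series directly rather than through the stochasticity of $Q$, and it has the additional merit of saying something about the first sojourn time $S_1$, which starts from an arbitrary context rather than from the root of a descent tree --- a case the paper's computation, conditioned on $C_{S_n}=\alpha s$, passes over in silence. Conversely, the paper's computation delivers the explicit semi-Markov kernel $q_{\alpha s,\beta t}(k)$ together with its normalization $\sum_{k\geq 1}\sum_{\beta t\in\rond S}q_{\alpha s,\beta t}(k)=1$, which your argument only yields implicitly.
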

\begin{proof}

Let 
\begin{align*}
q_{\alpha s,\beta t}(k)
&=\proba(J_{n+1}=\beta t,S_{n+1}=S_n+k | J_0,\dots,J_{n-1},J_n=\alpha s,S_0,\dots,S_n) \\
&=\proba(C_{S_n+1}\in\rond D_{\alpha s}\setminus \{\alpha s\},\dots,C_{S_n+k-1}\in\rond D_{\alpha s} \setminus \{\alpha s\} ,C_{S_n+k}=\beta t|C_{S_n}=\alpha s)\\
&= \sum _{\substack{c=t\cdots [\alpha s]\\|c|-|\alpha s|=k }}
\casc\left(\beta c\right), 
\end{align*}
so that $q_{\alpha s,\beta t}(k) =\proba(J_{n+1}=\beta t,S_{n+1}=S_n+k | J_n=\alpha s) $. This computation is illustrated in the example below, Remark \ref{rem:semiMarkov}.

Notice also that $\sum_{k\geq 1}q_{\alpha s,\beta t}(k)=Q_{\alpha s,\beta t}$. Therefore, the semi-Markov kernel property of $q_{\alpha s,\beta t}(k)$, namely
$\sum_{k\geq 1}\sum_{\beta t\in \rond S}q_{\alpha s,\beta t}(k) = 1$,
 follows straightforwardly from the stochasticity of $Q$, see Proposition~\ref{pro:stochasticity}.

Moreover, the stochasticity of $Q$ provides the a.s. finiteness of $S_{n+1}-S_n$. Indeed, for any $\alpha s\in\rond S$, 
\begin{align*}
\sum_{k\geq 1}\Proba\left(S_{n+1}-S_n = k | J_n=\alpha s  \right)&= \sum_{\beta t\in \rond S}\sum_{k\geq 1}\Proba\left(J_{n+1}=\beta t, S_{n+1}-S_n = k | J_n=\alpha s  \right)\\
&= \sum_{\beta t\in \rond S}\sum_{k\geq 1}  \sum _{\substack{c=t\cdots [\alpha s]\\|c|-|\alpha s|=k }}
\casc\left(\beta c\right) = \sum_{\beta t\in \rond S} Q_{\alpha s,\beta t} = 1
\end{align*}
\end{proof}

\begin{rem}
\label{rem:semiMarkov}
The semi-Markov chain contains less information than the chain $(U_n)$. 
To illustrate this, here is an example with a finite context tree.

\vskip 5pt
\begin{minipage}{0.4\textwidth}
\centering
\begin{tikzpicture}[scale=0.4]
		\tikzset{every leaf node/.style={draw,circle,fill},every internal node/.style={draw,circle,scale=0.01}}
\Tree [.{} [.{} [.{} {} [.{} {} {} ] ] [.{} {} [.{} {} {} ] ] ] [.{} {} [.{} {} {} ] ] 
	]
\end{tikzpicture}
\end{minipage}
\begin{minipage}{0.6\textwidth}
\centering
\begin{tabular}{r|l}
$\alpha$-lis $\alpha s$&contexts having $\alpha s$ as an $\alpha$-lis  \\
\hline
10&10,010,110,0010,0110\\
000&000\\
111&111,0111\\
0011&0011
\end{tabular}
\end{minipage}
\vskip 5pt
In this example, 0010 and 0110 are two contexts of same length, with the same $\alpha$-lis 10 and beginning by the same lis 0. Hence if we know that $J_n=10$, $S_{n+1}-S_{n}=3$ and $J_{n+1}=10$, there are two possibilities to reconstruct the VLMC $(U_n)$. With the notations of the proof above, there are two cascade terms in $q_{10,10}(3) $:

\vskip 5pt
\begin{minipage}{0.32\textwidth}
\begin{tikzpicture}[baseline,scale=0.6]
	\Tree [.{$10$} \edge[draw=red];[.{$010$} \edge[draw=red];[.{$0010$} \edge[dashed];{${\color{white} 00010}$} \edge[draw=red,dashed];[.\node(10010)[]{${\color{red} 10010}$}; \edge[draw=white];{} \edge[draw=white];\node(gauche)[]{$\rond D_{10}$}; ] ] \edge[dashed];{$1010$} ] \edge[draw=red];[.{$110$} \edge[draw=red];[.{$0110$} \edge[dashed];\node[fill=white,draw=white]{${\color{white} 00110}$}; \edge[draw=red,dashed];[.\node(10110)[]{${\color{red} 10110}$}; \edge[draw=white];{} \edge[draw=white];\node(droite)[fill=white]{$\rond D_{10}$}; ] \edge[draw=white];\node[fill=white,draw=white]{}; ] \edge[dashed];\node[fill=white,draw=white]{${\color{white} 1110}$}; ] \edge[draw=white];\node[fill=white,draw=white]{}; ]
	\draw[->] (10110)..controls +(south west:1) and +(west:1)..(droite);
\draw[->] (10010)..controls +(south west:1) and +(west:1)..(gauche);
\end{tikzpicture}
\end{minipage}
\begin{minipage}{0.68\textwidth}
\begin{align*}
q_{10,10}(3) &= \proba\left(C_{S_n+1} = 010, C_{S_n+2} = 0010,C_{S_n+3} = 10010 | C_{S_n} =10\right)\\
& \ \ + \proba\left(C_{S_n+1} = 110, C_{S_n+2} = 0110,C_{S_n+3} = 10110 | C_{S_n} =10\right)\\
&= q_{10}(0) q_{010}(0) q_{0010}(1) + q_{10}(1) q_{110}(0) q_{0110}(1)\\
&= \casc (10010) + \casc (10110).
\end{align*}
\end{minipage}
\end{rem}

\section{Miscellaneous examples, bestiary}
\label{sec:examples}

\subsection{Example with $\rond S$ finite and $\rond C^i$ infinite}
\label{subsubsec:SfiniCIinfini}

\begin{minipage}{0.4\textwidth}
\centering
\begin{tikzpicture}[scale=0.45]
\tikzset{every leaf node/.style={draw,circle,fill},every internal node/.style={draw,circle,scale=0.01}}
	\Tree [.{}
			[.{} {}
				[.{}
					[.{}
						[.{}
							[.{}
								[.{} \edge[line width=2pt,dashed];\node[fill=white,draw=white]{};\edge[draw=white];\node[fill=white,draw=white]{}; ] {}
							] {}
						] {}
					]
					[.{}
						[.{}
							[.{}
								[.{}
									[.{} \edge[line width=2pt,dashed];\node[fill=white,draw=white]{};\edge[draw=white];\node[fill=white,draw=white]{}; ] {}
								] {}
							] {}
						]
						[.{}
							[.{}
								[.{}
									[.{}
										[.{} \edge[line width=2pt,dashed];\node[fill=white,draw=white]{};\edge[draw=white];\node[fill=white,draw=white]{}; ] {}
									] {}
								] {}
							]
							[.{} \edge[line width=2pt,dashed];\node[fill=white,draw=white]{};\edge[line width=2pt,dashed];\node[fill=white,draw=white]{}; ]
						]
					]
				 ]
				]
				{}
		  ]

\end{tikzpicture}
\end{minipage}
\begin{minipage}{0.6\textwidth}
The finite contexts of this context tree are $00$, $1$, and the $01^p0^q1$, $p,q\geq 1$.
The infinite branches are the $01^p0^\infty$, $p\geq 1$, so that $\rond C^i$ is infinite.
There are four context $\alpha$-lis: $1$, $00$, $001$ and $101$, as the following array shows.
Note that there are only three context lis: $\emptyset$, $0$ and $01$.
This tree is non-stable.

\begin{center}
\begin{tabular}{r|l}
$\alpha s\in\rond S$&contexts having $\alpha s$ as an $\alpha$-lis\\
\hline
$1$&$1$\\
$00$&$00$\\
$001$&$01^p0^q1$, $p\geq 1$, $q\geq 2$\\
$101$&$01^p01$, $p\geq 1$
\end{tabular}
\end{center}
\end{minipage}

\vskip 40pt
The cascade series converge as soon as $q_1(1)\neq 1$ and $q_{00}(0)\neq 1$, since in this case,
\[\kappa_{1}=\kappa_{00}=\kappa_{001}=\kappa_{101}=1.\]
Let
\[A=\sum_{k\geq 0}q_{01^k01}(0)q_1(0)q_1(1)^{k}\]
\[B=\sum_{k\geq 0,l\geq 0}q_{01^k0^l1}(1)q_1(0)q_{00}(1)q_{1}(1)^{k}q_{00}(0)^{l}.\]
The matrix $Q$ writes as follows:
\[
Q=\left(\begin{array}{cccc}
q_1(1)&0&0&0\\
q_{00}(1)&q_{00}(0)&0&0\\
B&1-B&1-B&B\\
1-A&A&A&1-A
\end{array}\right).
\]
A simple computation leads to the unique direction of left-fixed vectors of $Q$
\[
	\left((A+B)q_{00}(1),Aq_{1}(0),Aq_1(0)q_{00}(1),Bq_1(0)q_{00}(1)\right)
\]
and Theorem~\ref{fQbij}\textit{(ii)} gives existence and uniqueness of a probability stationary measure for the VLMC associated to this probabilised context tree.
This application of Theorem~\ref{fQbij} is an alternative argument to Section~\ref{meynTweedie} this tree is an special case of.

\subsection{A non-stable tree: the brush}
\label{3lis}

This example provides an application of Theorem~\ref{fQbij} that is not covered by particular cases of Sections~\ref{meynTweedie} and~\ref{paccautGallo}.

\vskip 10pt
\begin{minipage}{0.4\textwidth}
\centering
\begin{tikzpicture}[scale=0.35]
\tikzset{
every leaf node/.style={draw,circle,fill},
every internal node/.style={draw,circle,scale=0.01}
}
\Tree
[.{} 
[.{} 
[.{} 
[.{} 
[.{} 
[.{} 
[.{} 
[.{} 
[.{} 
\edge[line width=2pt,dashed];\node[fill=white,draw=white]{};
{} 
]
{} 
]
{} 
]
{} 
]
{} 
]
{} 
]
{} 
]
[.{} 
[.{} 
[.{} 
[.{} 
[.{} 
[.{} 
[.{} 
\edge[line width=2pt,dashed];\node[fill=white,draw=white]{};
{} 
]
{} 
]
{} 
]
{} 
]
{} 
]
{} 
]
[.{} 
[.{} 
[.{} 
[.{} 
[.{} 
[.{} 
\edge[line width=2pt,dashed];\node[fill=white,draw=white]{};
{} 
]
{} 
]
{} 
]
{} 
]
{} 
]
[.{} 
[.{} 
[.{} 
[.{} 
[.{} 
\edge[line width=2pt,dashed];\node[fill=white,draw=white]{};
{} 
]
{} 
]
{} 
]
{} 
]
[.{} 
[.{} 
[.{} 
[.{} 
\edge[line width=2pt,dashed];\node[fill=white,draw=white]{};
{} 
]
{} 
]
{} 
]
[.{} 
[.{} 
[.{} 
\edge[line width=2pt,dashed];\node[fill=white,draw=white]{};
{} 
]
{} 
]
[.{} 
\edge[line width=2pt,dashed];\node[fill=white,draw=white]{};
\edge[line width=2pt,dashed];\node[fill=white,draw=white]{};
]
]
]
]
]
]
]
{} 
]
\end{tikzpicture}

\end{minipage}
\begin{minipage}{0.6\textwidth}
The finite contexts of this non-stable tree are $1$ and the $01^p0^q1$, $p\geq 0$, $q\geq 1$.
There are infinitely many infinite branches, namely the $01^p0^\infty$, $p\geq 0$.
There are only three $\alpha$-lis, as summed up in the following array.
\begin{center}
\begin{tabular}{r|l}
$\alpha$-lis $\alpha s$&contexts having $\alpha s$ as an $\alpha$-lis\\
\hline
$1$&$1$\\
$001$&$0^q1$ and $01^p0^q1$, $p\geq 1$, $q\geq 2$\\
$101$&$01^p01$, $p\geq 1$
\end{tabular}
\end{center}
\end{minipage}
\vskip 5pt
Compute the cascade series: $\kappa _1=1$, and $\kappa _{101}=1$ as soon as $q_1(1)\neq 1$.
If one denotes
\[
c_q=\casc (0^q1)=\prod _{k=2}^{q-1}q_{0^k1}(0)
\]
for every $q\geq 2$ (with $c_2=1$), then $\kappa _{001}=1+\sum _{q\geq 2}c_q$ as soon as this series converges.
Finally, under the above hypothesis of the convergence of cascade series, let
\[
A=\sum _{p\geq 1,~q\geq 2}\casc (101^p0^q1)
{\rm ~~and~~}
B=\sum _{p\geq 1}\casc (101^p01)
\]
-- these numbers are easily expressed in terms of the $q_c$.
With these notations, one gets
\[
Q=\begin{pmatrix}
q_1(1)&0&0\\
1+A&1-A&A\\
B&1-B&B
\end{pmatrix}.
\]
A simple glance to this matrix shows that $1$ is its Perron-Frobenius eigenvalue, and that all its left-fixed vectors are proportional to $\left( A+1-B,(1-B)q_1(0),Aq_1(0)\right)$.
Under the convergence of cascade series, the corresponding VLMC admits a unique stationary probability measure.

\subsection{Example with $\rond S$ infinite and $\rond C^i$ finite}
\label{subsubsec:SinfiniCIfini}

\begin{minipage}{0.4\textwidth}
\centering
\begin{tikzpicture}[scale=0.45]
		\tikzset{every leaf node/.style={draw,circle,fill},every internal node/.style={draw,circle,scale=0.01}}
\Tree [.{}
	[.{}
		[.{}
			[.{} 
					[.{} 
						[.{} \edge[line width=2pt,dashed];\node[fill=white,draw=white]{}; [.{} {} {} ] ]
						[.{} {} {} ]
					]
				[.{} {} {} ]
 			]
		[.{} {} {} ]
		] [.{} {} {} ]
	]	 
	[.{} 
		[.{}
			[.{}
				[.{}
					[.{} 
						[.{} \edge[line width=2pt,dashed];\node[fill=white,draw=white]{}; {} ]
						{} ]
					{} ]
			{} ] 
		{} ]
	{} ]
	]
\end{tikzpicture}
\end{minipage}
\begin{minipage}{0.6\textwidth}
The finite contexts of this tree are the $0^q10$, $0^q11$ and $10^p1$, $p\geq 0$, $q\geq 1$ while the infinite ones are $\rond C^i=\left\{ 0^\infty ,10^\infty\right\}$.
There are infinitely many context $\alpha$-lis, as made precise by the array.
This tree is stable.

\begin{center}
\begin{tabular}{r|l}
$\alpha s\in\rond S$&contexts having $\alpha s$ as an $\alpha$-lis\\
\hline
$11$&$0^q11$, $q\geq 1$\\
$010$&$0^q10$, $q\geq 1$\\
$10^p1$, $p\geq 1$&$10^p1$
\end{tabular}
\end{center}
\end{minipage}

\subsection{The left-comb of left-combs}
\label{subsubsec:pgPg}

\subsubsection{Definition and notations}

\begin{minipage}{0.6\textwidth}
\centering
\begin{tikzpicture}[scale=0.35]
		\tikzset{every leaf node/.style={draw,circle,fill},every internal node/.style={draw,circle,scale=0.01}}
\Tree [.{}
	[.{}
		[.{}
			[.{} 
					[.{} 
						[.{} \edge[line width=2pt,dashed];\node[fill=white,draw=white]{};  [.{} 
		[.{}
			[.{}
				[.{}
					[.{} 
						[.{} \edge[line width=2pt,dashed];\node[fill=white,draw=white]{}; {} ]
						{} ]
					{} ]
			{} ] 
		{} ]
	{} ]]
						[.{} 
		[.{}
			[.{}
				[.{}
					[.{} 
						[.{} \edge[line width=2pt,dashed];\node[fill=white,draw=white]{}; {} ]
						{} ]
					{} ]
			{} ] 
		{} ]
	{} ]
					]
				[.{} 
		[.{}
			[.{}
				[.{}
					[.{} 
						[.{} \edge[line width=2pt,dashed];\node[fill=white,draw=white]{}; {} ]
						{} ]
					{} ]
			{} ] 
		{} ]
	{} ]
 			]
		[.{} 
		[.{}
			[.{}
				[.{}
					[.{} 
						[.{} \edge[line width=2pt,dashed];\node[fill=white,draw=white]{}; {} ]
						{} ]
					{} ]
			{} ] 
		{} ]
	{} ]
		] [.{} 
		[.{}
			[.{}
				[.{}
					[.{} 
						[.{} \edge[line width=2pt,dashed];\node[fill=white,draw=white]{}; {} ]
						{} ]
					{} ]
			{} ] 
		{} ]
	{} ]
	]	 
	[.{} 
		[.{}
			[.{}
				[.{}
					[.{} 
						[.{} \edge[line width=2pt,dashed];\node[fill=white,draw=white]{}; {} ]
						{} ]
					{} ]
			{} ] 
		{} ]
	{} ]
	]
\end{tikzpicture}
\end{minipage}
\begin{minipage}{0.4\textwidth}
The \emph{left-comb of left-combs} is the context tree as drawn on the left:
the finite contexts are the $0^p10^q1$, $p,q\geq 0$.
Remark in passing that, for any corresponding VLMC, the transition probabilities of the Markov process on $\rond L$ depend only on the largest suffix of the form $0^q10^p$ of the current left-infinite sequence $U_n=\cdots 0^q10^p$
($p$ being possibly infinite).
\end{minipage}

\vskip 10pt
A left-comb of left-combs is a stable context tree.
Its has infinitely many infinite branches, namely $0^\infty$ and the $0^p10^\infty$, $p\geq 0$.
For any $p,q\geq 0$, the $\alpha$-lis of $0^p10^q1$ is $10^q1$.
In particular, the set $\rond S$ of $\alpha$-lis of contexts is countably infinite.
In this case, for any $q\geq 0$, the set of contexts having $10^q1$ as an $\alpha$-lis is also countably infinite.

Probabilise this context tree by a family $(q_c)_c$ of probability measures on $\{ 0,1\}$ and denote, for every $q,p\geq 0$,
\[
c_{q,p}=\casc (0^p10^q1)
=\prod _{0\leq k\leq p-1}q_{0^k10^q1}(0).
\]
The convergence of cascade series is equivalent to the finiteness of
\[
\kappa _{10^q1}=\sum _{p\geq 0}c_{q,p},
~\forall q\geq 0.
\]
The square matrix $Q$ is infinite, defined by
$Q_{10^q1,10^p1}=\casc (10^p10^q1)=c_{q,p}-c_{q,p+1}$
for all $p,q\geq 0$.
It has always finite entries, even if one cascade series diverges.
One sees immediately that $Q$ is line-stochastic if, and only if $c_{q,p}$ tends to $0$ as $p$ tends to infinity, for all $q\geq 0$.

\subsubsection{Stationarity and summability of left-fixed vectors of $Q$}
\label{subsubsec:pgpgSommable}

This paragraph is devoted to an example of (stable) VLMC that satisfies that the following properties:

- the cascade series converge;

- the VLCM admits no stationary probability measure;

- every left-fixed vector of $Q$ is summable.

The context tree of the example is a left-comb of left-combs with the above notations.

\vskip 5pt
Let $v_p=\frac 1{p+1}-\frac 1{p+2}$ and $R_p=\sum _{q\geq p}v_q=\frac 1{p+1}$ for every $p\geq 0$
(more generally, on can build a similar counter-example based on any positive sequence $(v_p)_p$ such that $\sum _{p\geq 0}v_p=1$ and $\sum pv_p$ diverge).
Define $S$ by
\[
S(x)=\sum _{q\geq 0}v_qx^{\frac 1{q+1}}.
\]
The series is normally convergent on the real interval $[0,1]$ so that $S$ is continuous on $[0,1]$ and satisfies $S(0)=0$ and $S(1)=1$.
Furthermore, $S$ is derivable and increasing on $[0,1]$ since the derived series converges normally on any compact subset of $]0,1]$.
Finally, $S(x)\geq v_qx^{\frac 1{q+1}}$ on $[0,1]$ for every $q\geq 0$.
Consequently, for every $t>0$, there exists $C_t>0$ such that
\begin{equation}
\label{S-1plate}
S^{-1}(x)\leq C_tx^t
\end{equation}
for every $x\in [0,1]$.

\vskip 5pt
Take now the probabilised left-comb of left-combs defined by the relations
\[
\forall q,p\geq 0,~c_{q,p}=S^{-1}\left( R_p\right)^{\frac 1{q+1}} .
\]
Note that these equations fully define the corresponding VLCM because the probabilities $q_{0^p10^q1}$ are characterized by these $c_{q,p}$ \emph{via} the equalities $q_{0^p10^q1}(0)=c_{q,p+1}/c_{q,p}$.
The definition of $S$ implies that $\sum _{q\geq 0}v_qc_{q,p}=R_p$ for every $p\geq 0$, which precisely means that $v=vQ$
(the row-vector $v$ is a left-fixed vector for $Q$).
Besides, for any $q\geq 0$, applying~\eqref{S-1plate} for $t=2(q+1)$ leads to inequalities
\[
\forall p\geq 0,~c_{q,p}\leq C_{2(q+1)}\left(\frac 1{p+1}\right)^2.
\]

Thus, the sequences $(v_q)$ and $(c_{q,p})$ satisfy the following properties.
\begin{enumerate}
\item $\forall q\geq 0, \sum_p c_{q,p}<\infty$,
\item $\forall p\geq 0, \sum_{q\geq 0} v_qc_{q,p}=\sum_{q\geq p}v_q$,
\item $\sum_q v_q<\infty$,
\item $\sum_{q,p\geq 0}v_qc_{q,p}=+\infty$.
\end{enumerate}
In terms of the VLMC, with general notations of Section~\ref{sec:general}, these properties translate into:
\begin{enumerate}
\item the cascade series converge,
\item $(v_{\alpha s})_{\alpha s\in\rond S}$ is a left-fixed vector for $Q$,
\item $\sum_{\alpha s\in\rond S} v_{\alpha s}<\infty$,
\item there exists a unique stationary positive measure $\pi$ on $\rond L$ such that $\pi\left(\rond L\overline{\alpha s}\right)=v_{\alpha s}$ for every $\alpha s\in\rond S$. The measure $\pi$ is not finite.
\end{enumerate}
The existence and uniqueness of the measure $\pi$ in item 4. can be shown by simple adaptation of the proof of Theorem~\ref{fQbij}, the total mass of $\pi$ being
\[\pi(\rond L)=\sum_{\alpha s\in\rond S}v_{\alpha s}\kappa_{\alpha s}=\sum_{\substack{{\alpha s\in\rond S}\\{c\in\rond C,~c=\cdots[\alpha s]}}}\casc(c)v_{\alpha s}
=\sum_{q,p\geq 0}v_qc_{q,p}.
\]

\subsection{Tree of small kappas}
\label{subsec:housse}

\begin{minipage}{0.5\textwidth}
\begin{tikzpicture}[scale=0.45]
\tikzset{
every leaf node/.style={draw,circle,fill},
every internal node/.style={draw,circle,scale=0.01}
}
\Tree [.{} 
[.{} 
[.{} 
[.{} 
[.{} 
[.{} 
\edge[dashed];\node[fill=white,draw=white]{};
[.{} 
[.{} 
[.{} 
[.{} 
[.{} 
{} 
{} 
]
{} 
]
{} 
]
{} 
]
{} 
]
]
[.{} 
[.{} 
[.{} 
[.{} 
{} 
{} 
]
{} 
]
{} 
]
{} 
]
]
[.{} 
[.{} 
[.{} 
{} 
{} 
]
{} 
]
{} 
]
]
[.{} 
[.{} 
{} 
{} 
]
{} 
]
]
[.{} 
{} 
{} 
]
]
[.{} 
[.{} 
[.{} 
[.{} 
[.{} 
[.{} 
[.{} 
[.{} 
[.{} 
[.{} 
\edge[dashed];\node[fill=white,draw=white]{};
{} 
]
{} 
]
{} 
]
{} 
]
{} 
]
{} 
]
{} 
]
{} 
]
{} 
]
{} 
]
]
\end{tikzpicture}
\end{minipage}
\begin{minipage}{0.5\textwidth}
The tree of small kappas is the context tree as drawn on the left.
Its finite contexts are the following ones:

$\star$ $0^m10^k1$, $m\geq 1$, $0\leq k\leq m-1$;

$\star$ $0^m10^m$, $m\geq 1$;

$\star$ $10^m1$, $m\geq 0$.

This context tree gets two infinite branches, namely $0^\infty$ and $10^\infty$.
It is non-stable.
There are infinitely many context $\alpha$-lis, as summed up in the following array.

\begin{center}
\begin{tabular}{r|l}
$\alpha$-lis $\alpha s$&contexts having $\alpha s$ as an $\alpha$-lis \\
\hline
$11$&$0^m11,~m\geq 0$\\
$10^k1,~ k\geq 1$&$10^k1$ and $0^m10^k1,~m\geq k+1$\\
$10^m,~m\geq 1$&$0^m10^m$\\
\end{tabular}
\end{center}
\end{minipage}

\vskip 5pt
When the context tree is probabilised, the convergence of cascade series is equivalent to the convergence of the series
\[
\kappa _{11}=\sum _{m\geq 0}\prod _{j=0}^{m-1}q_{0^j11}(0)
{\rm ~~and~~}
\kappa _{10^k1}=1+\sum _{m\geq k+1}\prod _{j=0}^{m-1}q_{0^j10^k1}(0),
~\forall k\geq 1.
\]
The remaining $\kappa _{\alpha s}$ are defined by sums of one sole term, namely, for all $m\geq 1$,
\[
\kappa _{10^m}=\casc (0^m10^m)=\prod _{j=1}^{m-1}q_{0^j10^j}(0).
\]
In particular, the sequence $\left(\kappa _{10^m}\right) _{m}$ is not bounded below by any positive number as soon as the infinite product diverges to $0$.
\Nfin


\subsection{Variations on the left-comb of right-combs}
\label{sec:pgPd}

This Section produces two examples of stable context trees that give rise to a direct application of Theorem~\ref{cor:finite}.
The first one, named left-comb of right-combs, is particularly simple because if has only one $\alpha$-lis of contexts.
The left-comb of right-combs augmented by a cherry stem, a variation of the former one, gets four $\alpha$-lis of contexts.
Because of Theorem~\ref{cor:finite}, both corresponding VLMC have a (unique) stationary probability measure if, and only if their cascade series converge.

\vskip 10pt
\begin{minipage}{0.4\textwidth}
\centering
\begin{tikzpicture}[scale=0.3]
\tikzset{
every leaf node/.style={draw,circle,fill},
every internal node/.style={draw,circle,scale=0.01}
}
\Tree [.{} 
[.{} 
[.{} 
[.{} 
[.{} 
[.{} 
[.{} 
[.{} 
[.{} 
\edge[line width=2pt,dashed];\node[fill=white,draw=white]{};
\edge[line width=2pt,dashed];\node[fill=white,draw=white]{};
]
[.{} 
{} 
\edge[line width=2pt,dashed];\node[fill=white,draw=white]{};
]
]
[.{} 
{} 
[.{} 
{} 
\edge[line width=2pt,dashed];\node[fill=white,draw=white]{};
]
]
]
[.{} 
{} 
[.{} 
{} 
[.{} 
{} 
\edge[line width=2pt,dashed];\node[fill=white,draw=white]{};
]
]
]
]
[.{} 
{} 
[.{} 
{} 
[.{} 
{} 
[.{} 
{} 
\edge[line width=2pt,dashed];\node[fill=white,draw=white]{};
]
]
]
]
]
[.{} 
{} 
[.{} 
{} 
[.{} 
{} 
[.{} 
{} 
[.{} 
{} 
\edge[line width=2pt,dashed];\node[fill=white,draw=white]{};
]
]
]
]
]
]
[.{} 
{} 
[.{} 
{} 
[.{} 
{} 
[.{} 
{} 
[.{} 
{} 
[.{} 
{} 
\edge[line width=2pt,dashed];\node[fill=white,draw=white]{};
]
]
]
]
]
]
]
[.{} 
{} 
[.{} 
{} 
[.{} 
{} 
[.{} 
{} 
[.{} 
{} 
[.{} 
{} 
[.{} 
{} 
\edge[line width=2pt,dashed];\node[fill=white,draw=white]{};
]
]
]
]
]
]
]
]
[.{} 
{} 
[.{} 
{} 
[.{} 
{} 
[.{} 
{} 
[.{} 
{} 
[.{} 
{} 
[.{} 
{} 
[.{} 
{} 
\edge[line width=2pt,dashed];\node[fill=white,draw=white]{};
]
]
]
]
]%
]
]
]
]
\end{tikzpicture}

\end{minipage}
\begin{minipage}{0.6\textwidth}
The finite contexts of the left-comb of right-combs (drawn on the left) are the $0^p1^q0$, $p\geq 0$, $q\geq 1$.
It has infinitely many infinite branches, namely the $0^p1^\infty$, $p\geq 0$.
This context tree is stable and all finite contexts have $01$ as an $\alpha$-lis.
The matrix~$Q$, which is thus $1$-dimensional, is reduced to $(1)$.
The convergence of the unique cascade series consists in the summability of the double sum
\[
\sum _{p\geq 0,q\geq 1}\prod _{j=0}^{p-1}q_{0^j1^q0}(0)\prod _{k=1}^{q-1}q_{1^k0}(1).
\]
Note that the transition probabilities of this Markov chain depend only of the largest suffix of the form $1^q0^p$ of the current left-infinite sequence $U_n=\cdots 1^q0^p$ ($q$ being possibly infinite).
\end{minipage}

\vskip 10pt
\begin{minipage}{0.6\textwidth}
The left-comb of right-combs with a cherry stem consists in simply replacing the context $01$ of the preceding tree by the cherries $100$ and $101$.
The tree is still stable and it has four context $\alpha$-lis, as resumed in the array.
\begin{center}
\begin{tabular}{r|l}
$\alpha$-lis $\alpha s$&contexts having $\alpha s$ as an $\alpha$-lis\\
\hline
$100$&$100$\\
$101$&$101$\\
$010$&$0^p10$, $p\geq 1$\\
$110$&$0^p1^q0$, $p\geq 0$, $q\geq 2$
\end{tabular}
\end{center}
\end{minipage}
\begin{minipage}{0.4\textwidth}
\centering
\begin{tikzpicture}[scale=0.3]
\tikzset{
every leaf node/.style={draw,circle,fill},
every internal node/.style={draw,circle,scale=0.01}
}
\Tree [.{} 
[.{} 
[.{} 
[.{} 
[.{} 
[.{} 
[.{} 
[.{} 
[.{} 
\edge[line width=2pt,dashed];\node[fill=white,draw=white]{};
\edge[line width=2pt,dashed];\node[fill=white,draw=white]{};
]
[.{} 
{} 
\edge[line width=2pt,dashed];\node[fill=white,draw=white]{};
]
]
[.{} 
{} 
[.{} 
{} 
\edge[line width=2pt,dashed];\node[fill=white,draw=white]{};
]
]
]
[.{} 
{} 
[.{} 
{} 
[.{} 
{} 
\edge[line width=2pt,dashed];\node[fill=white,draw=white]{};
]
]
]
]
[.{} 
{} 
[.{} 
{} 
[.{} 
{} 
[.{} 
{} 
\edge[line width=2pt,dashed];\node[fill=white,draw=white]{};
]
]
]
]
]
[.{} 
{} 
[.{} 
{} 
[.{} 
{} 
[.{} 
{} 
[.{} 
{} 
\edge[line width=2pt,dashed];\node[fill=white,draw=white]{};
]
]
]
]
]
]
[.{} 
{} 
[.{} 
{} 
[.{} 
{} 
[.{} 
{} 
[.{} 
{} 
[.{} 
{} 
\edge[line width=2pt,dashed];\node[fill=white,draw=white]{};
]
]
]
]
]
]
]
[.{} 
{} 
[.{} 
{} 
[.{} 
{} 
[.{} 
{} 
[.{} 
{} 
[.{} 
{} 
[.{} 
{} 
\edge[line width=2pt,dashed];\node[fill=white,draw=white]{};
]
]
]
]
]
]
]
]
[.{} 
[.{} 
{} 
{} 
]
[.{} 
{} 
[.{} 
{} 
[.{} 
{} 
[.{} 
{} 
[.{} 
{} 
[.{} 
{} 
[.{} 
{} 
\edge[line width=2pt,dashed];\node[fill=white,draw=white]{};
]
]
]
]
]%
]
]
]
]
\end{tikzpicture}

\end{minipage}

\vskip 5pt
In this last example, the convergence of cascade series is equivalent to the finiteness of both sums
\[
\kappa _{010}=\sum _{p\geq 1}\prod _{k=1}^{p-1}q_{0^k10}(0)
{\rm ~~and~~}
\kappa _{110}=\sum _{p\geq 1}\prod _{j=0}^{p-1}q_{0^j1^q0}(0)\prod _{k=2}^{q-1}q_{0^k1^q0}(1).
\]

\section{More about the non-stable case}
\label{sec:towards}

Staying in the framework of non-nullness of the $q_c(\alpha)$  for all $c\in\rond C$ and $\alpha\in\rond A$ in order to avoid degenerate situations, we think that the non-stable case may be much more investigated, thanks to the following tracks.

Namely, for \emph{totally non-stable} context trees, as defined below, we claim the following conjecture.

\begin{defi}
\label{def:totally}
A context tree is \emph{totally non-stable}, when the set of infinite branches $\rond C^i$ has no shift-invariant subset.
\end{defi}

Among examples in Section \ref{sec:examples}, Example \ref{subsubsec:SfiniCIinfini}, is totally non-stable, though Examples \ref{3lis} and \ref{subsec:housse} are non-stable but not totally non-stable (they have an infinite comb among their infinite branches).

\begin{conj}
Let $(\rond T,q)$ be a non-null probabilised context tree.
If $\rond T$ is totally non-stable, then there exists a unique probability stationary measure for the VLMC associated to $(\rond T, q)$.
\end{conj}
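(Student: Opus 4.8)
The plan is to bypass the matrix $Q$ and argue directly on the chain $(U_n)_n$ via the Meyn--Tweedie--Kac machinery, exactly as in Section~\ref{meynTweedie}. Fix a finite context $c^\star\in\rond C^f$; one exists because a tree all of whose nodes have two children is the complete binary tree, whose set of infinite branches is uncountable, contradicting the defining property of a context tree. As observed in Section~\ref{meynTweedie}, the cylinder $A=\rond L\overline{c^\star}$ is an \emph{atom} of $(U_n)_n$: all its elements share the one-step law $q_{c^\star}$, and $U_n\in A$ if and only if $\lpref(\overline{U_n})=c^\star$. Non-nullness of $(\rond T,q)$ makes $A$ accessible from every state — from any $\ell\in\rond L$ one reaches $A$ with positive probability, e.g.\ in $|c^\star|$ steps, by appending the word $\overline{c^\star}$ — so $(U_n)_n$ is $\psi$-irreducible with $\psi$ carried by $A$. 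By the standard dichotomy for a $\psi$-irreducible chain possessing an accessible atom, $(U_n)_n$ is transient, null recurrent or positive recurrent, and it admits a stationary probability measure — automatically unique — if and only if it is positive recurrent (Kac's theorem, see~\cite[Ch.~10]{meyn/tweedie/09}). Thus the conjecture is \emph{equivalent} to the single assertion
\[
\Espe\left[\tau_A\mid U_0\in A\right]<\infty,
\qquad
\tau_A=\inf\{n\geq 1:\ U_n\in A\},
\]
and it is here — and, it seems, only here — that total non-stability must be invoked.

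\textbf{Decomposing the excursion.} To estimate $\tau_A$ I would import the jump/sojourn picture of Section~\ref{subsection:semimarkov}: during an excursion away from $A$ the word $\overline{U_n}$ grows and $\lpref(\overline{U_n})$ moves down a descent-type structure (contexts sharing a fixed $\alpha$-lis) until a letter is added that shortens it — a \emph{jump}. Writing $\tau_A=S_1+\dots+S_N$, with $N$ the number of jumps before the return to $c^\star$ and $S_i$ the $i$-th sojourn time, one needs (a) $N<\infty$ a.s.\ and in fact $\Espe[N]<\infty$, and (b) enough control on the sojourns \emph{jointly} with the jump chain for the random sum to be integrable. The finiteness and integrability of $N$ has two sides: that the embedded jump chain returns to $c^\star$ at all is the recurrence of $A$, while, once recurrence is known, that it returns \emph{quickly} should follow from an irreducibility-type argument in the spirit of Proposition~\ref{Qirreductible}, whose proof is essentially combinatorial and should admit an analogue for non-stable trees once one no longer leans on Lemma~\ref{lem:fondLem}. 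The genuinely new input is therefore the recurrence of $A$: one must show that $|\lpref(\overline{U_n})|$ does not tend to $+\infty$ with positive probability.

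\textbf{Where total non-stability bites; the main obstacle.} This is the step I expect to be genuinely hard. The intended mechanism: on the event $\{|\lpref(\overline{U_n})|\to\infty\}$, for each length $m$ the internal prefix of $\overline{U_n}$ of length $m$ takes, for infinitely many $n$, a value among the finitely many length-$m$ internal nodes of $\rond T$; a pigeonhole/K\"onig extraction then produces an infinite branch $r\in\rond C^i$ along which the escape occurs. Because adding a letter to the right of $U_n$ is the same as prepending a letter to the left of $\overline{U_n}$ — i.e.\ as reading $\sigma$-preimages — performing this extraction simultaneously at every time scale should yield a nonempty, $\sigma$-closed subfamily of $\rond C^i$, contradicting total non-stability. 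In the stable case this is transparent, since $\sigma(\rond I)\subseteq\rond I$ already makes $\rond C^i$ itself $\sigma$-closed whenever it is nonempty; in a non-stable tree $\sigma$ may carry internal nodes outside $\rond T$, so turning ``an escaping trajectory'' into ``a $\sigma$-closed family of branches'' requires a delicate simultaneous control of the shape of $\rond T$ along the trajectory, and this is the crux on which a proof stands or falls. A secondary but necessary point is to upgrade recurrence of $A$ to \emph{positive} recurrence; by the computation of Section~\ref{meynTweedie} this reduces to a weighted summability of cascades — morally $\sum_{c\in\rond C^f}\bigl(|c|-|\alpha_c s_c|\bigr)\casc(c)<\infty$ — which the same drift control on the escape time should supply. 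Finally, if the direct drift approach resists, one can instead go through Theorem~\ref{fQbij} and try to show that under total non-stability the (non-negative, in general neither sub- nor super-stochastic) matrix $Q$ has a one-dimensional space of left-fixed vectors, summable against $(\kappa_{\alpha s})_{\alpha s}$ so that~\eqref{posrec} can be met — in the examples of Sections~\ref{subsubsec:SfiniCIinfini} and~\ref{3lis} this is precisely the statement that $1$ is a simple dominant eigenvalue of $Q$ — but this route appears to meet the same essential difficulty in a different guise.
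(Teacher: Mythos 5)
This statement is stated in the paper as a \emph{conjecture}: the authors give no proof, so there is nothing to compare your argument against, and the question is simply whether your proposal closes the problem. It does not. Your reduction is sound as far as it goes: a finite context $c^\star$ exists, $\rond L\overline{c^\star}$ is an accessible atom under non-nullness, and by the Meyn--Tweedie/Kac dichotomy (exactly as in Section~\ref{meynTweedie}) the conjecture is equivalent to $\Espe[\tau_A\mid U_0\in A]<\infty$. But the proposal then stops precisely where the conjecture begins. The step you describe as ``where total non-stability bites'' --- extracting, from the event $\{|\lpref(\overline{U_n})|\to\infty\}$, a nonempty shift-invariant subset of $\rond C^i$ by pigeonhole/K\"onig --- is not carried out, and you yourself flag it as ``the crux on which a proof stands or falls.'' The difficulty is real: in a non-stable tree $\sigma$ can send contexts and internal nodes outside $\rond T$, so the infinite branches obtained as limits of growing contexts along an escaping trajectory have no a priori reason to form a $\sigma$-closed family inside $\rond C^i$; converting a probabilistic escape event into that combinatorial object is the entire content of the conjecture, and no mechanism for doing so is supplied.

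A second, independent gap: even if recurrence of the atom were established, you assert that positive recurrence ``should follow from an irreducibility-type argument in the spirit of Proposition~\ref{Qirreductible}'' together with ``the same drift control.'' The paper's own examples show this cannot be waved through: Section~\ref{subsubsec:pgpgSommable} exhibits a chain whose cascade series converge and whose left-fixed vectors of $Q$ are summable, yet which admits no stationary probability measure because condition $(c_3)$ of Theorem~\ref{th:stable} fails --- i.e.\ null recurrence is a genuine obstruction, not a technicality, and irreducibility says nothing about it. The weighted summability $\sum_{c\in\rond C^f}\bigl(|c|-|\alpha_cs_c|\bigr)\casc(c)<\infty$ that you would need is exactly the kind of quantitative decay the authors conjecture (exponential decay of cascades for totally non-stable trees) but do not prove either. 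In short: the framing and the reduction are correct and match the paper's own tools, but both essential steps --- recurrence from total non-stability, and its upgrade to positive recurrence --- remain open, so the statement is still a conjecture after your proposal.
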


Using Proposition~\ref{pro:stabilized} as in Corollary~\ref{cor:stabilized}, a first step in this direction consists in proving the following weaker conjecture.

\begin{conj}
Let $(\rond T,q)$ be a non-null probabilised context tree.
If $\rond T$ is totally non-stable and stabilizable, then its stabilized satifies the conditions of Theorem~\ref{th:stable}.
\end{conj}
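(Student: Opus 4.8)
The plan is to reduce everything to Theorem~\ref{th:stable} applied to the stabilized tree. By Proposition~\ref{pro:stabilized}, $(\rond T,q)$ and $(\widehat{\rond T},\widehat q)$ define the same VLMC; moreover $\widehat q$ inherits non-nullness from $q$, and $\widehat{\rond T}$ is by construction stable. Hence Theorem~\ref{th:stable} applies to $(\widehat{\rond T},\widehat q)$, and it is enough to prove its second assertion --- the mere \emph{existence} of a stationary probability measure for the VLMC --- since the theorem then automatically delivers uniqueness and the three conditions $(c_1)$, $(c_2)$, $(c_3)$.

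To produce a stationary measure I would run a Krylov--Bogolyubov argument adapted to the discontinuities of the kernel. The alphabet being finite, $\rond L=\rond A^{-\g N}$ is compact, so, starting from an arbitrary $U_0$, the occupation measures $\pi_N=\frac1N\sum_{n=0}^{N-1}\mu_n$ (where $\mu_n$ is the law of $U_n$) admit a weak subsequential limit $\pi\in\rond M_1(\rond L)$. The one-step kernel $P$ is continuous at every $\ell$ for which $\lpref(\overline\ell)$ is a \emph{finite} context: once $\ell_k\to\ell$ agrees with $\ell$ on the last $|\lpref(\overline\ell)|$ letters the context read is the same, and $\ell_k0\to\ell0$, $\ell_k1\to\ell1$. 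Thus $P$ fails to be Feller only on the set $D=\{\overline r:\ r\in\rond C^i(\widehat{\rond T})\}$, which is countable since $\widehat{\rond T}$ is a context tree; and a standard portmanteau computation ($\int Pf\,d\pi_N\to\int Pf\,d\pi$ whenever $Pf$ is bounded and $\pi$-a.e. continuous, combined with $\pi_NP=\pi_N+\frac1N(\mu_N-\mu_0)$) shows that $\pi$ is $P$-invariant as soon as $\pi(D)=0$. Everything therefore reduces to proving $\pi(\{\overline r\})=0$ for every infinite branch $r$ of $\widehat{\rond T}$.

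This is where total non-stability should enter. I would split the infinite branches of $\widehat{\rond T}$ into those that are external (noninternal) in the original tree $\rond T$, and those that are themselves infinite branches of $\rond T$. In the first case $\lpref_{\rond T}(r)$ is a fixed finite context $c$, so the transition probabilities along a neighbourhood of $r$ are eventually equal to $q_c$, which by non-nullness satisfies $0<q_c(\alpha)<1$; this forces a geometric escape, and such a branch carries no asymptotic occupation mass. In the second case total non-stability guarantees that some shift $\sigma^m(r)$ is \emph{not} an infinite branch of $\rond T$, hence falls into the first, ``good'' category; one then has to transport this good behaviour back to $r$ along the shift action on the context chain $(C_n)$, equivalently along the excursion structure of the semi-Markov chain $(Z_n)$ of Section~\ref{subsection:semimarkov}. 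Putting the two cases together gives $\pi(D)=0$, hence a stationary measure, hence --- by Theorem~\ref{th:stable} --- uniqueness together with $(c_1)$--$(c_3)$; combined with Corollary~\ref{cor:stabilized} this also settles the first conjecture for stabilizable trees.

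The main obstacle is exactly that transport step. Observe first that $\widehat{\rond T}$ is itself \emph{never} totally non-stable once it has an infinite branch: $\rond C^i(\widehat{\rond T})$ is shift-invariant and, by compactness of $\rond R$, contains a nonempty (indeed minimal) shift-invariant subset. So total non-stability cannot be invoked for $\widehat{\rond T}$ directly and must be fed in entirely through the finite contexts of $\rond T$ --- which is delicate because descent trees of $\widehat{\rond T}$ grow by \emph{prepending} letters (right-to-left labelling), so an excursion descending a descent tree for a long time does not converge, in the naive prefix sense, to a single infinite branch, and identifying the escape direction and attaching it to a shift-orbit inside $\rond C^i(\rond T)$ needs genuine care. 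A second difficulty is quantitative: recurrence of $Q$ is not enough, since condition $(c_3)$ --- summability of $\sum_{\alpha s\in\rond S}v_{\alpha s}\kappa_{\alpha s}$ --- is subtle (see Remark~\ref{rem:contreexemple}), so the escape estimates must be strong enough to give \emph{positive} recurrence with finite mean sojourn times; it is conceivable that a fully general proof of $(c_3)$ requires a mild extra assumption on the decay of the $q_c$ along infinite branches rather than total non-stability alone.
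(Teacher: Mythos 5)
First, note that the statement you are proving is stated in the paper as a \emph{conjecture}: the authors give no proof of it, and explicitly present it as an open problem (a ``first step'' towards their main conjecture). So there is no proof in the paper to compare against, and the relevant question is whether your proposal actually closes the problem. It does not: it is a strategy sketch whose decisive steps are left open, as you yourself acknowledge. The reduction at the start is sound --- by Proposition~\ref{pro:stabilized} the two probabilised trees define the same VLMC, $\widehat{\rond T}$ is stable, non-nullness passes to $\widehat q$, and by Theorem~\ref{th:stable} it suffices to produce \emph{one} stationary probability measure, after which $(c_1)$--$(c_3)$ come for free. The Krylov--Bogolyubov reduction to showing $\pi(D)=0$ for the discontinuity set $D=\{\overline r:\ r\in\rond C^i(\widehat{\rond T})\}$ is also a legitimate standard step. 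Your observation that $\rond C^i(\widehat{\rond T})$ is closed, shift-invariant and hence (if nonempty) contains a minimal subset --- so that total non-stability is \emph{not} inherited by the stabilized tree and must be exploited through $\rond T$ itself --- is correct and genuinely useful.

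The gap is everything after that. (1) To conclude $\pi(D)=0$ for a subsequential weak limit $\pi$ of occupation measures you need a \emph{quantitative, uniform-in-$n$} bound on the mass the laws $\mu_n$ put on shrinking cylinder neighbourhoods of each $\overline r$; Lemma~\ref{lem:pineq0}(ii) gives $\pi(\overline r)=0$ only for a measure already known to be stationary, so invoking the ``geometric escape'' heuristic here is close to circular. Your first case ($r$ noninternal in $\rond T$) is plausible but unproven --- the fixed finite context $\lpref_{\rond T}(r)$ controls only one transition probability, not the probability of remaining in a whole neighbourhood of $\overline r$ for many steps, since after one step the relevant context changes. (2) The ``transport step'' --- propagating good behaviour from $\sigma^m(r)$ back to $r$ along the descent-tree/semi-Markov structure --- is exactly the content of the conjecture and is not carried out; you name the difficulty (descent trees grow by prepending, so long excursions do not converge to a single branch in the prefix topology) but do not resolve it. (3) Finally, your closing concession that $(c_3)$ might require an extra decay hypothesis on the $q_c$ means the proposal, even if completed along these lines, might only prove a weaker statement than the conjecture as stated. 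In short: correct framing and a sensible plan, but the conjecture remains unproved.
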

In particular, we believe that the conditions of convergence of cascade series that are required by the existence of a stationary probability measure (see (i) in Theorem~\ref{fQbij}) are due to the existence of shift-stable subsets of $\rond C^i$.
In otherwords, for a totally non-stable stabilizable context tree, the convergence of cascade series of the stabilized automatically holds because the general terms of these series decay exponentially fast.
The very simple bamboo blossom example below comforts this impression.

\vskip 5pt
The bamboo blossom is the context tree
\begin{minipage}{26pt}
\begin{tikzpicture}[scale=0.2]
\tikzset{every leaf node/.style={draw,circle,fill},every internal node/.style={draw,circle,scale=0.01}}
\Tree [.{}
	[.{} {} 
		[.{} [.{} {} [.{} [.{} {} [.{} \edge[line width=2pt,dashed];\node[fill=white,draw=white]{};\edge[draw=white];\node[fill=white,draw=white]{}; {} ] ] {} ] ] {} ] ]
		{}
      ]
\end{tikzpicture}
\end{minipage}.
One can refer to~\cite{cenac/chauvin/paccaut/pouyanne/12} to retrieve a necessary and sufficient condition for existence and uniqueness of a stationary probability measure.
The bamboo blossom is stabilizable, its stabilized being the double bamboo
\begin{minipage}{48pt}
\begin{tikzpicture}[scale=0.2]
\tikzset{every leaf node/.style={draw,circle,fill},every internal node/.style={draw,circle,scale=0.01}}
\Tree [.{}
	[.{} {} 
		[.{} [.{} {} [.{} [.{} {} [.{} \edge[line width=2pt,dashed];\node[fill=white,draw=white]{};\edge[draw=white];\node[fill=white,draw=white]{}; {} ] ] {} ] ] {} ] ]
		\edge[draw=white];\node[fill=white,draw=white]{};
		[.{} [.{} {} [.{} [.{} {} [.{} [.{} {} \edge[draw=white];\node[fill=white,draw=white]{};\edge[line width=2pt,dashed];\node[fill=white,draw=white]{};] {} ] ] {} ] ] {} ] ]
\end{tikzpicture}
\end{minipage}.
The double bamboo bossom has two $\alpha$-lis: $00$ and $11$.
The corresponding sums of cascade series are respectively
\begin{align*}
	\kappa_{00}&=\sum_{n\geq 0}\prod_{k=1}^{n-1}q_{(10)^k0}(0)\prod_{k=0}^{n-1}q_{(01)^k00(1)}+\sum_{n\geq 1}\prod_{k=1}^{n-2}q_{(10)^k0}(0)\prod_{k=0}^{n-1}q_{(01)^k00(1)}\\
	\kappa_{11}&=\sum_{n\geq 0}\prod_{k=0}^{n-1}q_{(10)^k11}(0)\prod_{k=1}^{n-1}q_{(01)^k1(1)}+\sum_{n\geq 1}\prod_{k=0}^{n-1}q_{(10)^k11}(0)\prod_{k=1}^{n-2}q_{(01)^k1(1)}.
\end{align*}
If one probabilises the double bamboo in the sense of Proposition~\ref{pro:stabilized}, then $q_{(10)^k0}=q_1$ for all $k\geq 1$ and $q_{(10)^k11}=q_1$ for all $k\geq 0$.
The sums of cascade series become thus
\begin{align*}
	\kappa_{00}&=\sum_{n\geq 0}q_1(0)^{n-1}\prod_{k=0}^{n-1}q_{(01)^k00(1)}+\sum_{n\geq 1}q_1(0)^{n-2}\prod_{k=0}^{n-1}q_{(01)^k00(1)}\\
	\kappa_{11}&=\sum_{n\geq 0}q_1(0)^{n}\prod_{k=1}^{n-1}q_{(01)^k1(1)}+\sum_{n\geq 1}q_1(0)^{n}\prod_{k=1}^{n-1}q_{(01)^k1(1)},
\end{align*}
converging as soon as $q_{1}(0)<1$.
This is the only condition to ensure existence and uniqueness of the stationary probability measure, as it was stated in \cite{cenac/chauvin/paccaut/pouyanne/12}. Note that the set of infinite branches of the bamboo blossom has no shift-invariant subset.


\bibliographystyle{plainnat} 
\bibliography{paccaut}

\end{document}